\newcommand\abs[1]{\lvert #1\rvert}
\newtheorem{THM}{Theorem}[section]
\newtheorem{LEM}[THM]{Lemma}
\newtheorem*{THMMAIN}{Theorem \ref{theorem:obstruction}}
\newtheorem{COR}[THM]{Corollary}
\newtheorem{PROP}[THM]{Proposition}
\theoremstyle{definition}
\newtheorem{definition}{Definition}[section]
\newtheorem{claim}{Claim}[THM]
\newcommand{\rst}[1]{\ensuremath{{\mathbin\upharpoonright}\raise-.5ex\hbox{$#1$}}}
\newcommand\sctw{\operatorname{sctw}}
\newcommand\tw{\operatorname{tw}}
\newcommand\sptw{\operatorname{spghtw}}
\newcommand\spctw{\operatorname{spctw}}
\newcommand\pw{\operatorname{pw}}
\newcommand\dptw{\operatorname{dspghtw}}
\newcommand\td{\operatorname{td}}
\newcommand\MSO{\operatorname{MSO}}
\title{Characterizing Width Two \\ for Variants of Treewidth}
\author{Hans L. Bodlaender%
\thanks{Department of Computing Science, Utrecht
University, P.O. Box 80.089, 3508 TB, Utrecht, the Netherlands. 
\href{mailto:h.l.bodlaender@uu.nl}{h.l.bodlaender@uu.nl}.}
\and 
Stefan Kratsch
\thanks{TU Berlin, Ernst-Reuter-Platz 7, 10587 Berlin, Germany. Work done while supported by the Netherlands Organization for Scientic Research, N.W.O., project `KERNELS: Combinatorial Analysis of Data Reduction'.
\href{mailto:stefan.kratsch@tu-berlin.de}{stefan.kratsch@tu-berlin.de}.}
\and
Vincent J.C. Kreuzen%
\thanks{School of Business and Economics, Quantitative Economics, Maastricht
University, the Netherlands.
The work was partially done while at Department of Computing Science, Utrecht University. \href{mailto:v.kreuzen@maastrichtuniversity.nl}{v.kreuzen@maastrichtuniversity.nl}}
\and O-joung Kwon%
\thanks{Department of Mathematical Sciences, KAIST, 291 Daehak-ro
  Yuseong-gu Daejeon, 305-701 South Korea.
  This author is supported by Basic Science Research
  Program through the National Research Foundation of Korea (NRF)
  funded by  the Ministry of Science, ICT \& Future Planning
  (2011-0011653).
  \href{mailto:ojoung@kaist.ac.kr}{ojoung@kaist.ac.kr}.}
\and Seongmin Ok%
\thanks{DTU Compute, Technical University of Denmark, DK-2800 Lyngby, Denmark.
\href{mailto:seok@dtu.dk}{seok@dtu.dk}}
}
\begin{document}

\maketitle
\addtocounter{footnote}{1}
\footnotetext{An earlier paper on the characterization of special treewidth two appeared in the proceedings of WG 2013 \cite{BodlaenderKK13}.}

\begin{abstract}
In this paper, we consider the notion of \emph{special treewidth}, recently introduced by Courcelle~\cite{Courcelle2012}. In a special tree decomposition, for each vertex $v$ in a given graph, the bags containing $v$ form a 
rooted path. 
We show that the class of graphs of special treewidth at most two is closed under taking minors, and give the complete list of the six minor obstructions.
As an intermediate result, we prove that every connected graph of special treewidth at most two can be constructed by arranging blocks of special treewidth at most two in a specific tree-like fashion.

Inspired from the notion of special treewidth, we introduce three natural variants of treewidth, namely \emph{spaghetti treewidth}, \emph{strongly chordal treewidth} and \emph{directed spaghetti treewidth}.
All these parameters lie between pathwidth and treewidth, and we provide common structural properties on these parameters.
For each parameter, we prove that the class of graphs having the parameter at most two is minor closed, and
we characterize those classes in terms of a \emph{tree of cycles} with additional conditions.
Finally, we show that for each $k\geq 3$, the class of graphs with special treewidth, spaghetti treewidth, directed spaghetti treewidth, or strongly chordal treewidth,
respectively at most $k$, is not closed under taking minors.
\end{abstract}

\section{Introduction}
\label{section:introduction}
\emph{Treewidth} and \emph{pathwidth} are one of the basic parameters in graph algorithms
and they play an important role in structural graph theory.
Numerous problems which are NP-hard on general graphs, have been shown to be solvable in polynomial time on graphs of bounded treewidth~\cite{ArnborgP89, Bodlaender98}. 
Courcelle~\cite{Courcelle90} provided a celebrated algorithmic meta-theorem which states that every
graph property expressible in monadic second-order logic formulas ($\MSO_2$) can be decided in linear
time on graphs of bounded treewidth.

In this paper, we discuss a relatively new notion of \emph{special treewidth}, introduced by
Courcelle~\cite{Courcelle2012}.
A special tree decomposition is a tree decomposition 
where for each vertex of a given graph, the bags containing this vertex form a rooted path in the tree.
Courcelle developed this parameter to reduce the difficulty in representing tree decompositions algebraically.
The monadic second-order logic ($\MSO_2$) checking algorithm for treewidth in the meta-theorem is based on the constructions of finite automata, and
he observed that these constructions
become much simpler
when working with special tree decompositions compared to standard tree decompositions.

Courcelle asked several questions on properties of special treewidth. 
One of the questions was how to characterize the class of graphs of special treewidth at most $k$ by forbidden configurations.
In this context, he showed that the graphs of special treewidth one are exactly the forests, but if $k\ge 5$, then the class of graphs of special treewidth at most $k$ is not closed under taking minors.

In this paper, we prove that the class of graphs of special treewidth at most two is closed under taking minors, and provide the minor obstruction set.
We also sharpen Coucelle's bound, and show that for $k\ge 3$, the class of graphs of special treewidth at most $k$ is not closed under taking minors.
The graph $K_4$ denotes the complete graph on four vertices, and the other five graphs 
are depicted in Figure~\ref{fig:pw2} and Figure~\ref{figure:forbidden}.

\begin{THMMAIN}
A graph has special treewidth at most two if and only if it has no minor isomorphic to $K_4, D_3, S_3, G_1, G_2,$ or $G_3$.
\end{THMMAIN}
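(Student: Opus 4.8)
The plan is to build everything on the block (biconnected-component) structure of $G$ together with the intermediate characterization announced above, which presents the connected graphs of special treewidth at most two as tree-like arrangements of blocks that themselves have special treewidth at most two; from such a concrete description one can read off both minor-closedness and the obstruction set. As a first reduction: special treewidth of a disjoint union is the maximum over components, each of the six graphs is connected, and a connected minor of $G$ lies inside one component of $G$, so it suffices to treat connected $G$; and since $\tw(G)\le\sctw(G)$ and $K_4$ is the unique obstruction to treewidth at most two, I may assume $G$ is $K_4$-minor-free, i.e.\ series-parallel, so that every block of $G$ is a $2$-connected series-parallel graph.

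For the ``if'' direction (excluding all six as minors $\Rightarrow \sctw(G)\le 2$), the substantial half, I would split the work into a \emph{local} part and a \emph{gluing} part, matching the two ways the structural characterization can fail. The local part classifies the $2$-connected series-parallel graphs admitting a width-$2$ special tree decomposition --- which for a $2$-connected graph I expect to be essentially a width-$2$ path decomposition --- and in fact one with two prescribed ``terminal'' vertices at its two ends; this is proved by induction over a series/parallel (SPQR-tree) decomposition, with some of the forbidden minors being precisely the smallest nested configurations that would destroy the prescribed-endpoint invariant under parallel composition (both branches wanting to ``use up'' the two terminals). The gluing part roots the block--cut tree at an arbitrary block and, using the remaining forbidden minors, bounds and normalizes how the blocks incident to a common cut vertex $v$ can simultaneously force $v$ to appear deep inside their decompositions --- intuitively at most two such blocks may compete, one extending upward and one downward along the rooted path reserved for $v$ --- and then assembles the block-local path decompositions top-down through the block--cut tree. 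Checking that the assembled decomposition is \emph{special} (every vertex's bags form a rooted path) is exactly where the gluing conditions, hence the forbidden minors governing them, are used.

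For the ``only if'' direction I would (i) show each of the six graphs has $\sctw\ge 3$ --- for $K_4$ this is $\sctw\ge\tw\ge 3$, and for the other five I would argue that in any width-$2$ special tree decomposition some vertex is forced to appear in two bags that are incomparable in the ancestor--descendant order of the decomposition tree (the three-way symmetry of a nested or three-way-glued configuration is what leaves no room to avoid this), contradicting the rooted-path requirement --- and (ii) derive minor-closedness of $\{G:\sctw(G)\le 2\}$ from the structural characterization: a minor of a tree-like arrangement of $2$-connected series-parallel blocks of special treewidth at most two is again such an arrangement, since the block-local property (a width-$2$ path decomposition with prescribed ends) and the gluing condition are each preserved under deleting a vertex, deleting an edge, and contracting an edge. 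Note that the naive route --- substituting a contracted vertex into every bag --- does \emph{not} keep a special tree decomposition special, which is precisely why $k=2$ must be handled through the structure theorem rather than directly. Combining (i) and (ii): if $\sctw(G)\le 2$ then every minor of $G$ has $\sctw\le 2$, so no minor of $G$ is one of the six.

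I expect the main obstacle to be the \emph{completeness} of the list inside the ``if'' direction: extracting \emph{some} finite obstructing family from the structural characterization is fairly mechanical, but proving that exactly these five series-parallel graphs together with $K_4$ suffice demands an exhaustive analysis of the first point at which a series/parallel decomposition or a cut-vertex gluing goes wrong, attributing each failure mode to one named minor --- overlooking a single configuration breaks the characterization. A secondary, more technical difficulty is pinning down the correct strengthened block-local invariant, since a path decomposition with only one, or with unconstrained, prescribed ends will not survive series composition, parallel composition, and the later attachment at cut vertices all at once.
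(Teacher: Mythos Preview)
Your block-based architecture --- characterize the $2$-connected pieces, then control how they glue at cut vertices --- matches the paper's, but the paper's execution differs from what you sketch. For the local part the paper does not run an SPQR induction with two prescribed terminals; it invokes the \emph{path-of-cycles} model for pathwidth two and shows (Corollary~\ref{cor:main6}) that for $2$-connected graphs special treewidth two, directed spaghetti treewidth two, and pathwidth two all coincide, with obstruction set $\{K_4,D_3,S_3\}$. For the gluing part the paper isolates the notion of a \emph{head vertex} (a vertex that can sit in an end bag of some width-two path decomposition of its block), defines \emph{mamba trees} as the graphs built by repeatedly attaching a new block at one of its head vertices, and proves that the connected graphs of special treewidth at most two are exactly the mamba trees. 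The decisive combinatorial step (Lemma~\ref{lem:specialblock}) is that a $2$-connected mamba with a distinguished non-head vertex $z$ contains one of two fixed rooted minors $(H_1,v)$, $(H_2,v)$ at $z$; two such rooted minors joined at their marked vertices produce $G_1$, $G_2$, or $G_3$, and a minimal-counterexample argument over leaf blocks of the block tree finishes.

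There is one concrete error in your gluing intuition. You write that at a cut vertex $v$ ``at most two such blocks may compete, one extending upward and one downward along the rooted path reserved for $v$.'' For \emph{special} treewidth the decomposition tree is rooted and every vertex's bags form a rooted path toward the root, so at a cut vertex $v$ at most \emph{one} incident block may have $v$ as a non-head vertex: that block's path decomposition occupies a segment of the main chain through $v$, while every other block must hang off as a subtree below $v$'s lowest bag, which forces $v$ into its first bag. The three non-$2$-connected obstructions $G_1,G_2,G_3$ are exactly the minimal witnesses to \emph{two} blocks both insisting that the shared cut vertex be non-head --- not three --- so getting this threshold right is what pins down the list.
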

\noindent To show this, we first prove that every block of special treewidth at most two must have pathwidth at most two.
But it is not a sufficient condition for having special treewidth two, and we establish a precise condition how those blocks can be attached to obtain a graph of special treewidth two.

\begin{table}[t]
\begin{center}
\begin{tabular}{|c|c|}
\hline
Parameter & Graph Class \\
\hline
treewidth & chordal graphs \\
pathwidth & interval graphs \\
special treewidth & RDV graphs \\
directed spaghetti treewidth & DV graphs \\
spaghetti treewidth & UV graphs \\
strongly chordal treewidth & strongly chordal graphs \\
treedepth & trivially perfect graphs \\
\hline
\end{tabular}
\caption{Graph parameters which can be defined by the clique number
of a supergraph from a class of graphs. Graph classes are defined in Section~\ref{section:preliminaries}.}
\label{table:graphclasses}
\end{center}
\end{table}

Inspired by special treewidth,
we introduce new three variants of treewidth.
From the results by Courcelle, we observe that
having bounded special treewidth is a 
much stronger property than having bounded treewidth.
We can naturally ask whether there exist elegant width parameters lying between special treewidth and treewidth,
which establish a link from pathwidth to treewidth.

Two variants, \emph{spaghetti treewidth} and \emph{directed spaghetti treewidth},  are defined by taking different models of tree decompositions.
While in the intersection model of special treewidth,
we associate each vertex with a rooted path, 
in a spaghetti tree decomposition, 
the bags containing each vertex form a `usual' path  in a tree (that is, without the condition of being rooted), and 
in a directed spaghetti tree decomposition,
the bags containing each vertex form a directed path in a tree with a given direction.
The \emph{strongly chordal treewidth} of a graph $G$ is defined as the minimum of the clique number of $H$ minus one over
all strongly chordal supergraphs $H$ of $G$. 
These parameters are at most the pathwidth and at least the treewidth of the graph.

Each of these new parameters can be alternatively defined as the minimum of the clique number of all supergraphs where the supergraphs belong to a certain graph class.
Another related notion is treedepth~\cite{BodlaenderDJKKMT98,BodlaenderGHK95}, and it can be defined as the minimum of the clique number of all trivially perfect supergraphs of a given graph.
Table~\ref{table:graphclasses} gives an overview of the parameters and the corresponding classes.

We expect that these new parameters can be used to provide a link between pathwidth and treewidth by yielding new structural or algorithmic results.
As a similar approach,  Fomin, Fraigniaud, and Nisse~\cite{FominFN09} introduced a parameterized variant of tree decompositions, called \emph{$q$-branched tree decompositions}, and provided a unified method to compute pathwidth and treewidth.
In this paper, we study common structural properties of our notions.

For each of the three parameters, we show that the class of graphs having width at most two is closed under taking minors.
Moreover,  we
precisely describe how those graphs look like in terms of {\em trees of cycles} with specific 
conditions depending on the parameter. 
Trees of cycles were used to characterize treewidth two~\cite{BodlaenderK93} and pathwidth two~\cite{BodlaenderF96b}.
In Table~\ref{table:overview}, we see an overview of the different parameters and
the minor obstruction sets for these classes.
As 2-connected graphs play a special role in several proofs, the
2-connected graphs in the obstruction sets are given in the second column.
In addition, for each of these parameters and each value $k\geq 3$, we show that the class of graphs with
the parameter at most $k$ is not closed under taking minors.

\begin{table}
\begin{center}
\begin{tabular}[t]{|c| c |c|}
\hline
Graph classes & Minor obstructions for & Minor obstructions for\\   
& $2$-connected graphs &  general graphs \\
\hline
$\tw \le 2$ & $K_4$ (see~\cite{ArnborgPC90,SatyanarayanaT90}) &  $K_4$ (see~\cite{ArnborgPC90,SatyanarayanaT90})  \\  
\hline
$\sptw \le 2$ & $K_4, D_3$ & $K_4, D_3$ \\  
\hline
$\sctw\le 2$ & $K_4, S_3$ & $K_4, S_3$ \\  
\hline
$\dptw\le 2$ & $K_4, D_3, S_3$ & $K_4, D_3, S_3$ \\  
\hline
$\spctw\le 2$ & $K_4, D_3, S_3$ & $6$ graphs \\  
\hline
$\pw\le 2$ & $K_4, D_3, S_3$~\cite{BJP2012, BodlaenderKK13} &  $110$ graphs~\cite{KinnersleyL94}
\\
\hline
$\td \le 3$ & $K_4, C_5$ \cite{DvorakGT12} & $12$ graphs~\cite{DvorakGT12} \\  
\hline
\end{tabular}
\end{center}
\caption{Summary of results. $\tw$, $\sptw$, $\sctw$, $\dptw$, $\spctw$, $\pw$ 
and $\td$ denote 
treewidth, spaghetti treewidth, strongly chordal treewidth, directed spaghetti treewidth, special treewidth, pathwidth, and treedepth respectively.} \label{table:overview}
\end{table}

Our characterizations in terms of forbidden minors fit in a line of
research, originated by the ground breaking results in the graph
minor project by Robertson and Seymour~\cite{RobertsonS20}. From the
results of Robertson and Seymour, for every minor-closed class $\cal G$ of graphs, there exists a finite obstruction set $ob({\cal G})$ of graphs such that
for each graph $H$, $H\in {\cal G}$ if and only if $H$ has no minor isomorphic to a graph in $ob({\cal G})$. 
For several minor-closed graph classes, the
obstruction set is known, for example, planar graphs ($\{K_5,K_{3,3}\}$ \cite{Wagner37}),
graphs embeddable in the projective plane \cite{Archdeacon83},
graphs of treewidth at most two ($\{K_4\}$, see \cite[Proposition 12.4.2]{Diestel10}),
graphs of treewidth at most three (a set of four graphs \cite{ArnborgPC90}),
graphs of pathwidth at most two (a set of 110 graphs \cite{KinnersleyL94}), and
outerplanar graphs ($\{K_4,K_{2,3}\}$).
The obstruction set of graphs of treedepth at most three (and smaller values)
was given by
Dvo\v{r}\'{a}k, Giannopoulou and Thilikos~\cite{DvorakGT12}; it contains exactly twelve graphs.

This paper is organized as follows. In Section~\ref{section:preliminaries},
we give a number of preliminary definitions and results, including the 
{\em trees of cycles} and {\em paths of cycles} models for $2$-connected graphs
of treewidth and pathwidth two. 
In Section~\ref{section:spaghetti}, we give the characterizations of graphs of spaghetti treewidth at most two.
Section~\ref{section:stronglychordal} discusses graphs with strongly chordal treewidth at most two.
In Section~\ref{section:special}, we discuss graphs of special treewidth at most two, and obtain similar results for graphs of directed spaghetti treewidth at
most two.
Section~\ref{section:width3} considers classes with special treewidth,
strongly chordal treewidth, spaghetti treewidth, or directed spaghetti
treewidth, respectively, at most $k$, for $k\geq 3$. We show that none of
these classes is closed under taking minors.
Some final remarks are made in Section~\ref{section:conclusions}.

\begin{figure}[t]
 \tikzstyle{v}=[circle, draw, solid, fill=black, inner sep=0pt, minimum width=3pt]
 \begin{center}
\begin{tikzpicture}[scale=0.08]

 \node[v](v1) at (0,30) {}; 
 \node[v](v2) at (30,30) {}; 
  \foreach \x in {20,30, 40}
 {
 \node[v] at (10,\x) {}; 
 \node[v] at (20,\x) {};
 \draw (v1) -- (10,\x)--(20,\x)--(v2);
 }
\node at (15,15) {$D_3$}; 

\end{tikzpicture}\qquad
\begin{tikzpicture}[scale=0.06]

 \node[v](v1) at (0,0) {}; 
 \node[v](v2) at (10,15) {}; 
 \node[v](v3) at (20,30) {};
 \node[v](v4) at (30,15) {}; 
 \node[v](v5) at (40,0) {};
 \node[v](v6) at (20,0) {}; 
 \draw (v1) -- (v2)--(v3)--(v4)--(v5)--(v6)--(v1);
 \draw (v2) -- (v4) -- (v6) -- (v2);
\node at (20,-7) {$S_3$}; 
 
\end{tikzpicture}
\end{center}
\caption{The graphs $D_3$ and $S_3$. The graph $S_3$ is called the 3-sun.}\label{fig:pw2}
\end{figure}
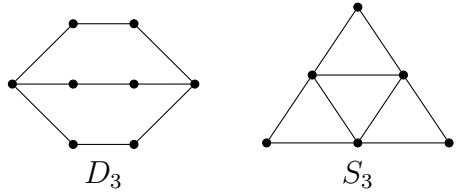
\begin{figure}
\begin{center}
\includegraphics*[viewport=0 90 480 180, scale=0.70]{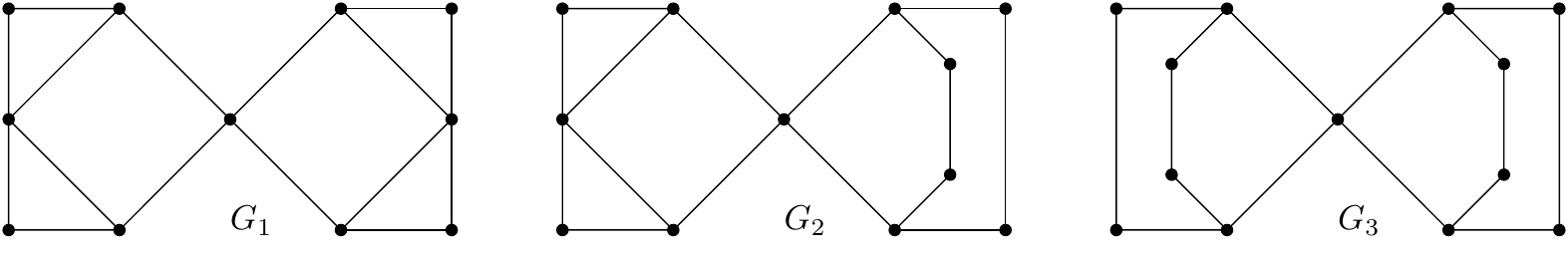}
\end{center}
\caption{The graphs $G_1$, $G_2$, $G_3$ of the minor obstruction set for graphs of special treewidth two, which are not $2$-connected.}
\label{figure:forbidden}
\end{figure}

\section{Preliminaries}
\label{section:preliminaries}

Unless stated otherwise, graphs are considered to be undirected and simple.
Let $G=(V,E)$ be a graph.
For a vertex set $S\subseteq V$, we denote $G[S]$ as the subgraph of $G$ induced on $S$. For $v\in V$ and $e\in E$, we denote $G-v$, $G-e$, $G/e$ as the graphs obtained from $G$ by removing $v$, removing $e$, and contracting $e$, respectively. For a pair of vertices $u, v\in V$ which are not adjacent in $G$, we denote $G+uv$ as the graph obtained from $G$ by adding an edge $uv$.
A subset $S$ of $V$ is a \emph{clique} of $G$ if all vertices in $S$ are pairwise adjacent in $G$.
The \emph{clique number} of a graph, denoted by $\omega(G)$, is defined as the size of a maximum clique in the graph.
A vertex $v$ in a graph $G$ is a \emph{simplicial vertex} if the neighborhood of $v$ forms a clique. 
The \emph{length} of a path is the number of edges in the path.

A graph $G$ is \emph{connected} if  for each pair of vertices $v,w\in V$, there exists a path from $v$ to $w$ in $G$.
A graph $G$ is \emph{$2$-connected}, if $\abs{V}\ge 3$ and $G[V-X]$ is connected for every vertex set $X\subseteq V$ with $\abs{X}\le 1$.
A vertex $v$ of a connected graph $G$ is a \emph{cut vertex} 
if $G-v$ is not connected. 
A {\em block} of a graph $G$ is a maximal connected subgraph of $G$ without a cut vertex.

A graph $H$ is a \emph{minor} of a graph $G$, if $H$ can be obtained from $G$ by a series of 
deletion of a vertex, deletion of an edge, and contraction of an edge. 
A \emph{subdivision} of a graph $G$ is a graph obtained from $G$ by replacing some edges of $G$ with independent paths between their end vertices.

\subsection{Graph Classes}
\label{subsection:definitions}

Several of the notions we look at can be defined in terms of intersection
graphs. 
Let $\mathcal{F}$ be a finite family of graphs. 
The {\em intersection graph}
of $\mathcal{F}$ is the graph $G_{\mathcal{F}}$ whose vertices are the members of the family such that two distinct vertices $f$, $f'$ of $G_{\mathcal{F}}$ are adjacent, if and only if
the corresponding graphs have a common vertex.

A {\em chord} in a cycle of a graph, is a pair of adjacent vertices
on the cycle that are not consecutive on the cycle.
A graph is {\em chordal}, if each cycle with length at least four
has a chord. Alternatively, a graph is a chordal graph, if and only if it is the
intersection graph of subtrees of a tree~\cite{Gavril74}. (See also \cite{BLS2001,Golumbic80}.)
A graph is an \emph{interval} graph if it is the intersection
graph of subpaths of a path.

For three variants of intersection graphs of paths on a tree, we follow the terms in the paper by Monma and Wei~\cite{MonmaW86}.
A graph is an \emph{undirected vertex path graph (shortly, an UV graph)} if it is the intersection graph of a set of paths in a tree.
UV graphs are also known as path graphs~\cite{Gavril78} or VPT graphs~\cite{GolumbicJ85b}.
A \emph{directed tree} is a directed graph whose underlying graph is a tree, and it is called a \emph{rooted tree} if it has exactly one specified vertex called the root and every arc of it is directed to the root.
A graph is a \emph{directed vertex path graph (shortly, a DV graph)} if it is the intersection graph of a set of directed paths in a directed tree.
A graph is a \emph{rooted directed vertex path graph (shortly, an RDV graph)} if it is the 
intersection graph of directed paths in a rooted tree.

A graph $G$ is \emph{strongly chordal} if $G$ is chordal and
every even cycle of length at least six in $G$
has a chord, called an \emph{odd chord}, dividing the cycle into two odd paths of length at least three.

The following relations are well known~\cite{BLS2001}.
	 \begin{align*}
	 &\text{(interval)}\subsetneq \text{(RDV)} \subsetneq \text{(strongly chordal)}\subsetneq \text{(chordal)}, \\
	 &\text{(RDV)}\subsetneq \text{(DV)} \subsetneq \text{(UV)} \subsetneq \text{(chordal)}.
	 \end{align*}

\subsection{Tree Decompositions}

The notions of pathwidth and treewidth were first introduced by Robertson and Seymour \cite{RobertsonS1,RobertsonS2}.

\begin{definition}
A \emph{tree decomposition} of a graph $G=(V,E)$ is 
    a pair $(T,\mathcal{B}=\{B_x\}_{x\in V(T)})$ 
    where $T$ is a tree and for all $x\in V(T)$, $B_x\subseteq V$ which are called \emph{bags}, 	
    satisfying the following three conditions:
\begin{enumerate}
\item[(T1)] $V=\bigcup_{x\in V(T)}B_x$.
\item[(T2)] For every edge $uv$ of $G$, there exists a vertex $x$ of $T$ such that $u$, $v\in B_x$.
\item[(T3)] For every vertex $v$ in $G$, the bags containing $v$ induce a subtree in $T$.
\end{enumerate}
	The \emph{width} of a tree decomposition $(T,\mathcal{B})$ is $\max\{ \abs{B_x}-1:x\in V(T)\}$. 
	The \emph{treewidth} of $G$, denoted by $\tw(G)$, is the minimum width of all tree decompositions of $G$. 
	A \emph{path decomposition} of a graph $G$ is a tree decomposition $(T,\mathcal{B})$ where $T$ is a path. 
	The \emph{pathwidth} of $G$, denoted by $\pw(G)$, is the minimum width of all path decompositions of $G$.
	\end{definition}
	
	We observe the following relations.
	\begin{THM}[folklore; see Bodlaender~\cite{Hans1998}]\label{thm:relclasses}
	Let $k$ be a positive integer.
	\begin{enumerate}
	\item A graph has treewidth at most $k$ if and only if 
	it is a subgraph of a chordal graph with clique number at most $k+1$.
	\item A graph has pathwidth at most $k$ if and only if 
	it is a subgraph of an interval graph with clique number at most $k+1$.
	\end{enumerate}
	\end{THM}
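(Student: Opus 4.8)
The plan is to prove each equivalence by translating back and forth between tree decompositions and intersection representations, using the two characterizations quoted in Section~\ref{subsection:definitions}: a graph is chordal if and only if it is the intersection graph of subtrees of a tree~\cite{Gavril74}, and a graph is interval if and only if it is the intersection graph of subpaths of a path. I will prove (1) in detail and then observe that (2) follows from the identical argument with ``tree'' replaced by ``path'' throughout.

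For the forward direction of (1), suppose $G$ has a tree decomposition $(T,\mathcal{B})$ of width at most $k$. I would form the supergraph $H$ on vertex set $V(G)$ by turning every bag into a clique, that is, making $u$ and $v$ adjacent exactly when some bag $B_x$ contains both of them. Then $G$ is a subgraph of $H$ by condition~(T2). For each vertex $v$, let $T_v$ be the set of nodes $x$ of $T$ with $v\in B_x$; by (T3) this is a subtree of $T$, and by construction $u$ and $v$ are adjacent in $H$ if and only if $T_v\cap T_w\neq\emptyset$. Hence $H$ is the intersection graph of the family $\{T_v : v\in V(G)\}$ of subtrees, so $H$ is chordal. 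Finally, any clique of $H$ is a family of pairwise intersecting subtrees of $T$, which by the Helly property of subtrees of a tree share a common node $x$; the clique is therefore contained in $B_x$, so $\omega(H)\le k+1$.

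For the backward direction of (1), suppose $G$ is a subgraph of a chordal graph $H$ with $\omega(H)\le k+1$. Using the subtree characterization, write $H$ as the intersection graph of subtrees $\{T_v : v\in V(H)\}$ of a tree $T$, and set $B_x=\{v : x\in T_v\}$ for every node $x$ of $T$. One checks (T1)--(T3) directly: each $T_v$ is nonempty, so $v$ lies in some bag; an edge $uv$ of $H$ (hence any edge of $G$) gives $T_u\cap T_v\neq\emptyset$, and any common node $x$ has $u,v\in B_x$; and the set of nodes $x$ with $v\in B_x$ is precisely the subtree $T_v$. Each bag $B_x$ is a clique of $H$, since the subtrees of its vertices all contain $x$, so $\abs{B_x}\le\omega(H)\le k+1$ and the width is at most $k$; restricting this decomposition to $G$ shows $\tw(G)\le k$.

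Part (2) is obtained verbatim from the above by replacing ``tree'' with ``path'' and ``subtree'' with ``subpath'', and using the interval-graph characterization in place of the chordal one. In the forward direction the decomposition tree is already a path, so the supergraph is an interval graph; in the backward direction the representation lives on a path, so the constructed decomposition is a path decomposition, giving $\pw(G)\le k$. I do not expect a serious obstacle here, since the statement is folklore; the only point that needs to be stated carefully is the Helly property of subtrees of a tree (and subpaths of a path), as this is exactly what lets one pass from ``pairwise intersecting bags'' to ``a single bag'' and thereby ties the clique number of the supergraph to the width of the decomposition.
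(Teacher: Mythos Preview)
Your proof is correct and is essentially the standard folklore argument. Note that the paper does not actually give a proof of this theorem; it is stated with the attribution ``folklore; see Bodlaender~\cite{Hans1998}'' and left unproved. Your argument---completing the bags to cliques to obtain a chordal (respectively interval) supergraph, and conversely turning a subtree (respectively subpath) representation into a tree (path) decomposition, with the Helly property of subtrees of a tree bridging clique size and bag size---is exactly the proof one finds in the cited survey and in standard treatments, so there is nothing to compare.
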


	For a tree decomposition  $\mathcal{I}=(T,\{B_x\}_{x\in V(T)})$ of a graph $G=(V,E)$ and $S\subseteq V$, 
	we denote $P(\mathcal{I}, S)$ as the set of the vertices $x$ in $T$ such that $B_x$ contains a vertex in $S$.
	For $x\in V$, $P(\mathcal{I},\{x\})$ is denoted shortly as $P(\mathcal{I},x)$.

\subsection{Special Treewidth, Directed Spaghetti Treewidth, Spaghetti Treewidth and Strongly Chordal Treewidth}
Courcelle~\cite{Courcelle2012} introduced the notion of special treewidth.
Directed spaghetti treewidth and spaghetti treewidth are natural variants of this
notion.

A \emph{special tree decomposition} of a graph $G$ is 
    a tree decomposition $(T,\mathcal{B})$ where 
    $T$ is a rooted tree and 
    for every vertex $v$ in $G$, the bags containing $v$ induce a directed path in $T$.	The \emph{special treewidth} of $G$, denoted by $\spctw(G)$, is the minimum width of all special tree decompositions of $G$. 	

A \emph{directed spaghetti tree decomposition} of a graph $G$ is 
    a tree decomposition $(T,\mathcal{B})$ where 
    $T$ is a directed tree (not necessarily rooted) and 
    for every vertex $v$ in $G$, the bags containing $v$ induce a directed path in $T$.
    	The \emph{directed spaghetti treewidth} of $G$, denoted by $\dptw(G)$, is the minimum width of all directed spaghetti tree decompositions of $G$. 

A \emph{spaghetti tree decomposition} of a graph $G$ is 
    a tree decomposition $(T,\mathcal{B})$ where
    for every vertex $v$ in $G$, the bags containing $v$ induce a path in $T$.
    The \emph{spaghetti treewidth} of $G$, denoted by $\sptw(G)$, is the minimum width of all spaghetti tree decompositions of $G$. 
	
	From the definitions, we can easily deduce that
	\[\tw(G)\le \sptw(G)\le \dptw(G)\le \spctw(G) \le \pw(G).\]

For a positive integer $k$, we can observe the following from the definitions.
\begin{itemize}
\item A graph has special treewidth at most $k$ if and only if 
	it is a subgraph of an RDV graph with clique number at most $k+1$~\cite{Courcelle2012}.
\item A graph has spaghetti treewidth at most $k$ if and only if 
	it is a subgraph of an UV graph with clique number at most $k+1$.
	\item A graph has directed spaghetti treewidth at most $k$ if and only if 
	it is a subgraph of a DV graph with clique number at most $k+1$.
\end{itemize}

	The \emph{strongly chordal treewidth} of a graph $G$, denoted by $\sctw (G)$, 
	is the minimum $k$ such that
	$G$ is a subgraph of a strongly chordal graph with clique number $k+1$.
	Farber~\cite{Farber82} showed that every RDV graph is strongly chordal. (See also \cite{BLS2001}.) 
	Since a strongly chordal graph is chordal,
	we have that 
	\[\tw(G)\le \sctw(G)\le \spctw(G).\]

\subsection{Models for Treewidth Two and Pathwidth Two}
\label{subsection:treesofcycles}

$2$-connected graphs of treewidth two and of pathwidth two have 
characterizations in terms of {\em trees of cycles} \cite{BodlaenderK93} and 
{\em paths of cycles} \cite{BodlaenderF96b}, respectively.
The {\em cell completion} $\widetilde{G}$ of a $2$-connected graph $G =(V,E)$ is the graph,
obtained from $G$ by adding an edge $vw$ for all pairs
of nonadjacent vertices $v$, $w\in V$ such that $G[V - \{v,w\}]$ has at least three
connected components.

\begin{definition}[Bodlaender and Kloks~\cite{BodlaenderK93}]
\label{def:babette3}
The class of \emph{trees of cycles} is the class of graphs recursively defined as follows.
\begin{itemize}
	\item Each cycle is a tree of cycles.
	\item For each tree of cycles $G$ and each cycle $C$, the graph obtained from $G$ and $C$ by taking the disjoint union and identifying an edge and its end vertices in $G$ with an edge and its end vertices in $C$, is a tree of cycles.
\end{itemize}
\end{definition}
\begin{THM}[Bodlaender and Kloks~\cite{BodlaenderK93}]\label{thm:babette}
Let $G$ be a $2$-connected graph. The graph $G$ has treewidth two if and only
if the cell completion $\widetilde{G}$ of $G$ is a tree of cycles.
\end{THM}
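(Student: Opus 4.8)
The plan is to prove the two implications separately. The direction ``$\widetilde G$ is a tree of cycles $\Rightarrow \tw(G)=2$'' is routine, so I would dispatch it first. By induction along the recursive definition of a tree of cycles, one shows that every tree of cycles has treewidth at most two: a cycle has treewidth two, and if $H$ already has a width-two tree decomposition $(T_H,\mathcal B_H)$ and a cycle $C$ is glued onto an edge $vw$ of $H$, then property (T2) yields a bag $B$ of $(T_H,\mathcal B_H)$ with $v,w\in B$ and likewise a bag $B'$ of a width-two tree decomposition of $C$; taking the disjoint union of the two trees and joining $B$ to $B'$ by a new edge produces a width-two tree decomposition of $H\cup C$ (the usual clique-sum argument). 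Since $G$ is a subgraph of $\widetilde G$, this gives $\tw(G)\le\tw(\widetilde G)\le 2$, and as $G$ is $2$-connected it contains a cycle, so $\tw(G)=2$.

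For the converse I would use throughout the folklore fact that a $2$-connected graph has treewidth at most two if and only if it has no $K_4$-minor. The first step is a lemma: cell completion preserves this, i.e.\ $\tw(\widetilde G)\le 2$ whenever $\tw(G)\le 2$. Fix a width-two tree decomposition $\mathcal I=(T,\{B_x\})$ of $G$ and let $vw$ be a pair added by the cell completion, so $vw\notin E(G)$ and $G-\{v,w\}$ has components $C_1,\dots,C_k$ with $k\ge 3$. Each $P(\mathcal I,C_i)$ is a subtree of $T$, and since $G$ is $2$-connected each of $v,w$ has a neighbour in each $C_i$, so (T2) puts $P(\mathcal I,C_i)$ in contact with both $P(\mathcal I,v)$ and $P(\mathcal I,w)$. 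If $P(\mathcal I,v)\cap P(\mathcal I,w)=\emptyset$, let $x$ be the endpoint in $P(\mathcal I,v)$ of the path of $T$ joining these two subtrees; then every $P(\mathcal I,C_i)$, being connected and meeting both subtrees, must contain $x$, forcing $v$ and one vertex from each $C_i$ into $B_x$, i.e.\ $|B_x|\ge k+1\ge 4$, a contradiction. Hence the three subtrees $P(\mathcal I,v),P(\mathcal I,w),P(\mathcal I,C_i)$ pairwise intersect and, by the Helly property of subtrees of a tree, have a common node whose bag already contains $\{v,w\}$. So the same decomposition witnesses $\tw(\widetilde G)\le 2$; in particular $\widetilde G$ is $K_4$-minor-free, and cell completion is idempotent since adding edges cannot create a new $2$-cut with three or more sides.

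The main argument is an induction on $|V(G)|$. If $G$ is a cycle, then $\widetilde G=G$ is a tree of cycles (a cycle has no $2$-cut with three sides). Otherwise $G$ is $2$-connected, $K_4$-minor-free and not a cycle (so $|V(G)|\ge 4$), hence not $3$-connected (a $3$-connected graph on at least four vertices has a $K_4$-minor), so $G$ has a $2$-cut $\{v,w\}$; let $C_1,\dots,C_k$ ($k\ge2$) be the components of $G-\{v,w\}$ and put $G_i=G[C_i\cup\{v,w\}]+vw$. Each $G_i$ is $2$-connected, is a minor of $G$ (contract a $v$--$w$ path through some $C_j$, $j\neq i$, down to the single edge $vw$ and delete the remaining components), hence $K_4$-minor-free, and has fewer vertices than $G$; so by induction each $\widetilde{G_i}$ is a tree of cycles. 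Since $vw$ is an edge of each $\widetilde{G_i}$ it lies on one of the cycles used to build $\widetilde{G_i}$; reordering that construction so that this cycle comes first --- a tree of cycles containing a prescribed edge can always be assembled starting from a cycle through that edge --- one can build $\widetilde{G_1}$ and then glue on $\widetilde{G_2},\dots,\widetilde{G_k}$ in turn along the common edge $vw$, so that $H:=\widetilde{G_1}\cup\cdots\cup\widetilde{G_k}$ (glued along $vw$) is a tree of cycles. It then remains to locate $\widetilde G$ inside $H$: a check of which edges are introduced by each cell completion should give $\widetilde G=H$ when $vw\in E(G)$ or $k\ge3$, and $\widetilde G=H-vw$ when $vw\notin E(G)$ and $k=2$, with $H-vw$ still a tree of cycles in this last case.

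I expect the final point to be the main obstacle: matching the edges added by the cell completion of $G$ with those added in the cell completions of the pieces $G_i$, and in particular correctly handling a $2$-cut with exactly two sides whose vertices are non-adjacent (the generalized-theta situation), where gluing the pieces back along $vw$ introduces an edge that is \emph{not} in $\widetilde G$. This is precisely where the ``three or more components'' clause in the definition of the cell completion bites, and one must check that deleting the superfluous edge $vw$ from $H$ leaves a tree of cycles; one would also need the auxiliary lemma that any tree of cycles containing a given edge can be assembled starting from a cycle through it.
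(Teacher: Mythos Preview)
The paper does not prove Theorem~\ref{thm:babette}; it is quoted from Bodlaender and Kloks~\cite{BodlaenderK93} as a preliminary result and used without proof, so there is no in-paper argument to compare against.

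Your plan is sound, and the obstacle you flag at the end is less serious than you fear. The observation that dissolves it is this: in each piece $G_i=G[C_i\cup\{v,w\}]+vw$ the set $G_i-\{v,w\}=C_i$ is connected by construction, hence so is $\widetilde{G_i}-\{v,w\}$, and therefore the edge $vw$ lies on \emph{exactly one} chordless cycle $D_i$ of the tree of cycles $\widetilde{G_i}$. This makes the re-rooting lemma straightforward to apply (build $\widetilde{G_i}$ starting from $D_i$; the inductive proof you sketch goes through), and in the troublesome case $k=2$, $vw\notin E(G)$ it means that in $H$ the edge $vw$ is on exactly the two chordless cycles $D_1,D_2$. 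Deleting $vw$ then simply merges $D_1$ and $D_2$ into one longer chordless cycle $D'$ (no chord can cross between $V(D_1)\setminus\{v,w\}$ and $V(D_2)\setminus\{v,w\}$, as those lie in different $G_i$), and every other cycle in the assembly of $H$ was glued along an edge $\neq vw$, hence along an edge of $D'$ or of a later cycle; so the tree-of-cycles assembly of $H$ transfers verbatim to one of $H-vw$ starting from $D'$.

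Your bookkeeping for which cell-completion edges appear in $\widetilde G$ versus in $\bigcup_i\widetilde{G_i}$ is also correct once written out: a $2$-set $\{a,b\}\subseteq V(G_i)$ with $\{a,b\}\neq\{v,w\}$ separates $G$ into the same number of pieces as it separates $G_i$ (all other $C_j$ lie in the component of $G-\{a,b\}$ containing $\{v,w\}\setminus\{a,b\}$, and in $G_i$ the edge $vw$ already puts $v,w$ in one component), while a $2$-set meeting two different $C_i$, $C_j$ leaves $G-\{a,b\}$ connected through $v$ and $w$. This gives $\widetilde G=H$ when $vw\in E(G)$ or $k\ge 3$, and $\widetilde G=H-vw$ otherwise, exactly as you conjectured.
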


An edge in a tree of cycles $G$ is called an \emph{edge separator} if 
it is contained in at least two distinct chordless cycles of $G$. 
We distinguish two different types of chordless cycles on a tree of cycles.
A triangle of a tree of cycles $G$ is called a \emph{simplicial triangle} if it contains a simplicial vertex; all other chordless cycles are called \emph{body cycles}. 
Every simplicial triangle of a tree of cycles contains at most one edge separators.

\begin{figure}[t]
 \tikzstyle{v}=[circle, draw, solid, fill=black, inner sep=0pt, minimum width=3pt]
 \begin{center}
\begin{tikzpicture}[scale=0.1]

 \node[v](v1) at (10,10) {}; 
 \node[v](v2) at (20,3) {}; 
 \node[v](v3) at (20,20) {}; 
 \node[v](v4) at (30,22) {}; 
 \node[v](v5) at (40,20) {}; 
 \node[v](v6) at (40,3) {}; 
 \node[v](v7) at (55,20) {}; 
 \node[v](v8) at (55,5) {}; 
 \node[v](v9) at (70,20) {}; 
 \node[v](v10) at (70,3) {}; 
 \node[v](v11) at (80,22) {}; 
 \node[v](v12) at (90,10) {}; 
 \node[v](v13) at (80,2) {}; 
 
 \node[v](w1) at (35,28) {}; 
 \node[v](w2) at (47,28) {}; 
 \node[v](w3) at (65,28) {}; 
 
 \draw (v1)--(v2)--(v6)--(v8)--(v10)--(v13)--(v12);
 \draw (v1)--(v3)--(v4)--(v5)--(v7)--(v9)--(v11)--(v12);
 \draw[very thick, gray] (v2)--(v3);\draw[very thick, gray] (v5)--(v6);\draw[very thick, gray] (v5)--(v8);\draw[very thick, gray] (v7)--(v8);
 \draw[very thick, gray] (v9)--(v10);
 \draw (v5)--(w1)--(v6);\draw(v5)--(w2)--(v6);\draw(v9)--(w3)--(v10);

\end{tikzpicture}
\end{center}
\caption{A path of cycles with four simplicial triangles and five edge separators.}\label{fig:mamba}
\end{figure}
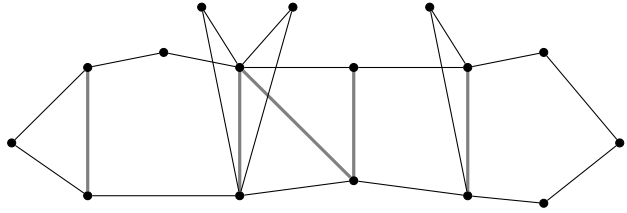

\begin{definition}[Bodlaender and de Fluiter~\cite{BodlaenderF96b}]
\label{def:babette2}
A \emph{path of cycles} is a tree of cycles $G$ for which the following holds.
\begin{enumerate}
	\item Each chordless cycle of $G$ has at most two edge separators.
	\item If an edge $e \in E$ is contained in $m \geq 3$ chordless cycles of $G$, then at least $m - 2$ of these cycles are simplicial triangles.
	\end{enumerate}
\end{definition}

See Figure~\ref{fig:mamba} for an example of a path of cycles.
A triangulated path of cycles has been called 2-caterpillar~\cite{Proskurowski89}. 
Every path of cycles can be represented by a sequence of chordless cycles. This structure will be used to characterize special treewidth two
in Section~\ref{section:special}.

\begin{definition}[Bodlaender and de Fluiter~\cite{BodlaenderF96b}]
\label{def:babetteCP}
Let $G$ be a path of cycles. Let $C = (C_1,\ldots,C_p)$ be a sequence
of chordless cycles such that each chordless cycle in $G$ appears exactly once in the sequence of cycles, and for $1\le i\le p-1$, $C_i$ shares exactly one edge $e_i$ with $C_{i+1}$. 
Let $E = (e_1,\ldots,e_{p-1})$ be the corresponding set of
common edges. The pair $(C,E)$ is called a \emph{cycle path model} for $G$.
\end{definition}

\begin{THM}[Bodlaender and de Fluiter~\cite{BodlaenderF96b}]
\label{thm:babette2}
Let $G$ be a $2$-connected graph. The graph $G$ has pathwidth two
if and only if $\widetilde{G}$ is a path of cycles.
\end{THM}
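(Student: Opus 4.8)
The plan is to reduce to the characterization of treewidth two (Theorem~\ref{thm:babette}) and then treat the two extra conditions of Definition~\ref{def:babette2} one at a time. A $2$-connected graph has at least three vertices and contains a cycle, so its pathwidth is at least two; hence ``pathwidth two'' here means ``pathwidth at most two''. Since $\pw(G)\le 2$ implies $\tw(G)\le 2$, Theorem~\ref{thm:babette} lets us assume throughout that $\widetilde G$ is a tree of cycles: otherwise $\tw(G)\ge 3$, so $\pw(G)\ge 3$, while $\widetilde G$ is then not a path of cycles either (every path of cycles is a tree of cycles), so both sides of the equivalence are false. It remains, for a tree of cycles $\widetilde G$, to show that $\pw(G)\le 2$ if and only if $\widetilde G$ satisfies conditions (1) and (2) of Definition~\ref{def:babette2}.

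For the ``if'' direction I would construct a width-two path decomposition of $\widetilde G$ directly; then $\pw(G)\le\pw(\widetilde G)\le 2$. Fix a cycle path model $(C,E)$ with $C=(C_1,\dots,C_p)$ and $E=(e_1,\dots,e_{p-1})$, which exists for a path of cycles by the remark preceding Definition~\ref{def:babetteCP}, and build the decomposition cycle by cycle. For a chordless cycle $C_i$ carrying the common edges $e_{i-1}$ and $e_i$, split $C_i$ into two internally disjoint paths running between endpoints of $e_{i-1}$ and endpoints of $e_i$, and obtain a width-two path decomposition of $C_i$ by sweeping along the first path while keeping an endpoint of $e_{i-1}$ in every bag, then switching and sweeping along the second while keeping an endpoint of $e_i$ in every bag; this makes the first bag of the segment contain the endpoints of $e_{i-1}$ and the last bag contain those of $e_i$ (a chordless triangle contributes a single bag). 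Concatenating these segments for $C_1,\dots,C_p$, identifying consecutive segments at the bag carrying $e_i$, yields a path decomposition of $\widetilde G$ in which (T1) and (T2) are immediate; conditions (1) and (2) of Definition~\ref{def:babette2} are exactly what one needs for (T3), since condition (1) ensures each cycle carries at most two of the $e_i$ (so the two-sweep construction applies) and condition (2) ensures that an edge lying on many chordless cycles lies on all but at most two triangles, whose one-bag segments slot in without breaking the interval property of any vertex. (Alternatively, one may triangulate a path of cycles zig-zag fashion into a $2$-caterpillar and invoke that $2$-caterpillars have pathwidth two.)

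For the ``only if'' direction I would argue contrapositively: assuming $\widetilde G$ is a tree of cycles but not a path of cycles, so condition (1) or (2) fails, I claim $G$ has a minor isomorphic to $D_3$ or to $S_3$; since $\pw(D_3)=\pw(S_3)=3$ by a direct check and pathwidth is minor-monotone, this gives $\pw(G)\ge 3$. If condition (1) fails, a chordless cycle $C$ has edge separators $e_1,e_2,e_3$, and each $e_j$ lies on a second chordless cycle of $\widetilde G$, which (after pulling back to $G$ using $2$-connectivity) supplies an internally disjoint path of length at least two between the endpoints of $e_j$; contracting $C$ appropriately together with each of the three side paths onto a single internal vertex produces the $3$-sun $S_3$. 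If condition (2) fails, an edge $vw$ lies on at least three body cycles, each of which gives (in $G$) an internally disjoint $v$--$w$ path avoiding $vw$; when these have length at least three one reads off a $D_3$ minor, and when a body cycle is a triangle $vwz$ the vertex $z$ is not simplicial, hence has a neighbour outside $\{v,w\}$, and the tree-of-cycles structure lets one reroute through it to lengthen the corresponding detour (or else an $S_3$ emerges as before). The step I expect to be the main obstacle is precisely this last one: making rigorous how the local failure of condition (1) or (2) combines with the global tree-of-cycles structure to force three internally disjoint ``long'' detours or a $3$-sun, and doing this inside $G$ rather than $\widetilde G$ so that cell-completion edges are accounted for; by contrast the lower bounds $\pw(D_3),\pw(S_3)\ge 3$ are a short separate verification, and $K_4$ cannot arise since $\tw(G)=2$.
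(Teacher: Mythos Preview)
The paper does not prove Theorem~\ref{thm:babette2}; it is quoted from Bodlaender and de~Fluiter~\cite{BodlaenderF96b} and used as a black box (for the implication $(5)\Rightarrow(1)$ in Corollary~\ref{cor:main6}). So there is no in-paper proof to compare against.

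Your sketch is correct in outline, and in fact your ``only if'' direction is essentially assembled from machinery the paper develops later for the other parameters. The step you flag as the main obstacle---turning a failure of condition~(1) or~(2) in $\widetilde G$ into an $S_3$ or $D_3$ minor of $G$---is precisely what Lemma~\ref{lem:avoidingcycle} (for condition~(1)) and Lemma~\ref{lem:twocompos} (for condition~(2)) accomplish; see the proofs of Propositions~\ref{prop:nominorsimple} and~\ref{prop:main3}. None of these depend on Theorem~\ref{thm:babette2}, so there is no circularity. For the lower bounds you need, note $\pw\ge\sptw$ and $\pw\ge\spctw\ge\sctw$, so Lemma~\ref{lem:h3minor} and Lemma~\ref{lem:sctws3} already give $\pw(D_3)\ge 3$ and $\pw(S_3)\ge 3$ without a separate check. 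In effect your ``only if'' is the chain $(1)\Rightarrow(4)\Rightarrow(5)$ of Corollary~\ref{cor:main6}, which the paper proves independently of Theorem~\ref{thm:babette2}.

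Your ``if'' direction is the genuinely new content relative to this paper, and your sweep construction works, but one point needs tightening. When $e_{i-1}$ and $e_i$ share an endpoint $v$, that vertex lies in $C_{i-1}$, $C_i$, and $C_{i+1}$, so (T3) forces $v$ into \emph{every} bag of the $C_i$ segment; your two-phase sweep as written (``keep an endpoint of $e_{i-1}$, then switch to an endpoint of $e_i$'') could drop $v$ midway if you pick the wrong endpoints. The fix is simple: if $e_{i-1}\cap e_i=\{v\}$, keep $v$ throughout and sweep the single arc of $C_i$ from the other endpoint of $e_{i-1}$ to the other endpoint of $e_i$; only when $e_{i-1}\cap e_i=\emptyset$ do the two-phase sweep. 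With this rule, any vertex lying in a run $C_j,\dots,C_{j+k}$ is the common endpoint of $e_j,\dots,e_{j+k-1}$ and is kept in all intermediate segments, so (T3) holds globally.
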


To obtain similar characterizations for spaghetti treewidth two and 
strongly chordal treewidth two, 
we will observe the structure of trees of cycles with exactly one of the conditions in Definition~\ref{def:babette2}.

\subsection{Simple Cases}
\label{subsection:prelimlemmas}

The main body of our paper discusses the cases where the special treewidth,
spaghetti treewidth, directed spaghetti treewidth or strongly chordal treewidth is at most two. We now briefly discuss the much
simpler case when these parameters are at most one.

	\begin{PROP}\label{prop:trees}
	Let $G$ be a graph. The following are equivalent.
	\begin{enumerate}
	\item $G$ is a forest.
	\item $G$ has treewidth at most one.
	\item $G$ has spaghetti treewidth at most one.
	\item $G$ has strongly chordal treewidth at most one.
	\item $G$ has directed spaghetti treewidth at most one.
	\item $G$ has special treewidth at most one.
	\end{enumerate}	
	\end{PROP}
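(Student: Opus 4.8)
The plan is to prove the cycle of implications $(2)\Rightarrow(3)\Rightarrow(4)$ (or $(3),(4)$ in some order), $(2)\Rightarrow(5)\Rightarrow(6)$, and then close the loop with $(6)\Rightarrow(1)$ and $(1)\Rightarrow(2)$, or alternatively exploit the known chain of inequalities $\tw(G)\le\sptw(G)\le\dptw(G)\le\spctw(G)\le\pw(G)$ together with $\tw(G)\le\sctw(G)\le\spctw(G)$ to reduce the work. The cleanest route is: first observe that all six parameters are squeezed between $\tw(G)$ and $\pw(G)$ (using the two displayed inequality chains from the preliminaries), so it suffices to show that $\pw(G)\le 1$ whenever $\tw(G)\le 1$; then every parameter in the middle is also at most one, and conversely each of them is at least $\tw(G)$.

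The first step is the equivalence $(1)\Leftrightarrow(2)$, which is folklore: a graph has treewidth at most one if and only if it is a forest. One direction is immediate since $K_3$ has treewidth $2$ and treewidth is minor-monotone, so a graph of treewidth $\le 1$ is acyclic; the other direction follows by rooting each tree of the forest and using the standard path-like decomposition, or simply by noting a tree can be built by adding leaves, each step keeping width $1$.

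The second and main step is to show that a forest has pathwidth at most one — equivalently, by Theorem~\ref{thm:relclasses}(2), that every forest is a subgraph of an interval graph with clique number at most $2$, i.e.\ a subgraph of a disjoint union of paths. This is a classical fact: a forest has pathwidth at most one if and only if each of its connected components is a caterpillar-free\dots no — rather, a forest has pathwidth $\le 1$ iff it contains no subdivision of the ``spider'' $K_{1,3}$\dots which is false for general forests. I should be careful here: in fact \emph{not} every forest has pathwidth $\le 1$ (the star $K_{1,3}$ has pathwidth $1$, but a ``binary tree of depth $2$'' has pathwidth $2$). So the squeezing argument through pathwidth does \emph{not} work, and instead the middle parameters must be bounded directly.

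Thus the real plan is: for the ``downward'' implications ($(2)\Rightarrow$ the middle parameters), exhibit for a forest a decomposition of width one that is simultaneously special, spaghetti, and directed-spaghetti — e.g.\ take the tree $T$ to be (a rooted orientation of) the forest itself with each edge $uv$ subdivided once, placing $\{u\}$ and $\{v\}$ at the endpoint nodes and $\{u,v\}$ at the subdivision node; then for each vertex $v$ the bags containing $v$ form a star centered at $v$'s node in $T$\dots which is not a path. A correct construction is instead to use a DFS/BFS layout: root each tree, and build $T$ as a caterpillar-like structure. Cleanest is to invoke that a forest is an RDV graph / UV graph / DV graph with clique number $2$ trivially — a forest is the intersection graph of single vertices of a tree, hence of (degenerate) directed paths in a rooted tree — giving special, spaghetti and directed spaghetti treewidth $\le 1$ via the supergraph characterizations, and a forest is strongly chordal (it is chordal and has no cycles at all, so the even-cycle condition is vacuous) with clique number $2$, giving strongly chordal treewidth $\le 1$. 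For the ``upward'' implications, each of $(3)$--$(6)$ implies $\tw(G)\le 1$ by the inequality chains, hence $G$ is a forest by $(2)\Rightarrow(1)$. The only mildly delicate point — and the main obstacle — is making the degenerate intersection-graph representations legitimate under the paper's definitions (a single vertex being a ``directed path of length $0$'' in a rooted tree, and the clique-number-$\le 2$ bookkeeping when $G$ has isolated vertices or is itself a tree); once that is granted, every implication is a one-liner.

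\begin{proof}
Since treewidth, spaghetti treewidth, directed spaghetti treewidth, special treewidth and strongly chordal treewidth all satisfy $\tw(G)\le \sptw(G)\le \dptw(G)\le \spctw(G)$ and $\tw(G)\le \sctw(G)\le \spctw(G)$, each of (3), (4), (5), (6) implies $\tw(G)\le 1$; and $\tw(G)\le 1$ if and only if $G$ is a forest, since $K_3$ has treewidth two and treewidth is minor-monotone, while a forest is built from isolated vertices by repeatedly adding a leaf, each step admitting a tree decomposition of width one. This gives $(2)\Leftrightarrow(1)$ and shows each of (3)--(6) implies (1).

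Conversely, suppose $G$ is a forest. A forest $G$ is the intersection graph of a family of single vertices of a tree $T'$ obtained from $G$ by subdividing each edge once (assign to $v\in V(G)$ the family member consisting of the node of $T'$ corresponding to $v$): distinct members are never disjoint only\dots rather, two such singletons are never equal, so the intersection graph on this family is edgeless, which is not $G$. Instead assign to the edge $e=uv$ of $G$ the singleton consisting of the subdivision node $x_e$, and to the vertex $v$ the path in $T'$ through the node of $v$ and the $x_e$ for $e$ incident to $v$; then this path has length $\deg_G(v)$ and two vertices $u,v$ share a node of $T'$ iff $uv\in E(G)$, while no three pairwise-adjacent vertices exist in a forest, so every clique of this intersection graph has size at most two. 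Orienting $T'$ so that it is rooted (any root) makes each of these paths a directed, in fact rooted, path, so $G$ is a subgraph of an RDV graph, a DV graph and a UV graph, each with clique number at most two; by the supergraph characterizations in Section~\ref{subsection:prelimlemmas}, $\spctw(G)\le 1$, $\dptw(G)\le 1$ and $\sptw(G)\le 1$, hence also (by $\sptw(G)\le\dptw(G)\le\spctw(G)$) all three hold together, giving (3), (5), (6). Finally, a forest is chordal and contains no cycle, so the defining condition of strong chordality holds vacuously; thus $G$ is a strongly chordal graph, and it is a subgraph of itself with clique number at most two, whence $\sctw(G)\le 1$, giving (4). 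Therefore (1) implies each of (3)--(6), and all six statements are equivalent.
\end{proof}
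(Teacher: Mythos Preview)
Your reduction of the implications $(3)$--$(6)\Rightarrow(1)$ to the inequality chains, and your argument for $(4)$ (a forest is vacuously strongly chordal with clique number at most two), are fine and match the paper's strategy. The gap is in your intersection-graph construction for $(1)\Rightarrow(3),(5),(6)$. In $T'$ (the subdivision of $G$), the set $\{v\}\cup\{x_e:e\text{ incident to }v\}$ that you assign to a vertex $v$ is a \emph{star} in $T'$, not a path, as soon as $\deg_G(v)\ge 3$; so you have only exhibited $G$ as a subtree intersection graph (recovering $\tw(G)\le 1$), not as a UV/DV/RDV graph. This is exactly the ``star, not a path'' issue you correctly flagged in your planning paragraph and then reintroduced. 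There is a second error layered on top: even if the assigned subtrees \emph{were} paths, ``orienting $T'$ so that it is rooted (any root)'' does not turn an arbitrary path in $T'$ into a directed (rooted) path---a path that goes up and then down relative to the root is not directed.

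The paper sidesteps all of this by reducing to connected graphs, citing Courcelle~\cite{Courcelle2012} for the single fact that trees have special treewidth at most one, and then letting the inequality chains do the rest. If you want a self-contained construction instead of a citation, the clean one is a special tree decomposition built recursively: root the tree at $r$ with children $w_1,\ldots,w_k$; take a rooted path of bags $\{r\},\{r,w_1\},\ldots,\{r,w_k\}$, and hang the (inductively built) special tree decomposition of the subtree rooted at $w_i$ below the bag $\{r,w_i\}$. Then the bags containing $r$ form a rooted path, and the bags containing each $w_i$ form a rooted path by induction plus one extra bag on top; this gives $\spctw(G)\le 1$, and the inequalities $\sptw\le\dptw\le\spctw$ finish $(3)$ and $(5)$.
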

	 \begin{proof}
Since the treewidth, spaghetti treewidth, special treewidth, directed spaghetti
treewidth or strongly chordal treewidth of a graph
equals the maximum of the parameter over the connected components of a 
graph, it is sufficient to show this proposition for connected graphs.

We assume that $G$ is connected.
It is well known that a connected graph has treewidth at most one, if and only if
it is a tree. Courcelle~\cite{Courcelle2012} has shown that trees have special
treewidth at most one. 
 From the inequalities $\tw(G)\le \sptw(G) \le \dptw(G)\le \spctw(G)$ and 
 $\tw(G)\le \sctw(G) \le \spctw(G)$,
 we conclude that all of the statements are equivalent.
	 \end{proof}

The following result was observed in the case of special treewidth by Courcelle~\cite{Courcelle2012}. The same proof can be used
to obtain this result for other width measures, as shown below.

\begin{LEM}\label{lemma:universal}
Let $G=(V,E)$ be a graph and let $v\in V$ such that $v$ is adjacent to all vertices of $V-\{v\}$ in $G$. Then  
\[ \spctw(G)=\sptw(G)=\dptw(G)=\pw(G)=\pw(G-v)+1.\]
\end{LEM}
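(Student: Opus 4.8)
The plan is to establish the chain of equalities by exhibiting explicit inequalities that close into a cycle. The rightmost equality $\pw(G) = \pw(G-v)+1$ is the anchor, and since $G$ satisfies the hypotheses of the lemma, every width measure in the chain is squeezed between $\tw(G)$ and $\pw(G)$ by the inequalities proved earlier in the excerpt ($\tw(G) \le \sptw(G) \le \dptw(G) \le \spctw(G) \le \pw(G)$). So it suffices to show two things: (i) $\pw(G) \le \pw(G-v) + 1$, and (ii) $\pw(G-v) + 1 \le \sptw(G)$ (or some measure at the bottom of the chain), because then all the quantities $\sptw(G), \dptw(G), \spctw(G), \pw(G)$ are trapped between $\pw(G-v)+1$ on both sides.

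\textbf{Step 1: the upper bound.} First I would take an optimal path decomposition $(P, \{B_x\})$ of $G-v$ of width $\pw(G-v)$, and add $v$ to every bag. The result is a path decomposition of $G$: condition (T1) and (T3) are immediate, and (T2) holds because every edge of $G-v$ was already covered, while every edge incident to $v$ (which goes to some vertex $w \in V - \{v\}$) is covered in any bag containing $w$, all of which now also contain $v$. This decomposition has width at most $\pw(G-v) + 1$, giving $\pw(G) \le \pw(G-v) + 1$.

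\textbf{Step 2: the lower bound.} For the reverse direction I would start from an optimal spaghetti tree decomposition $(T, \{B_x\})$ of $G$ of width $\sptw(G)$ and show it can be converted into a path decomposition of $G-v$ of width $\sptw(G) - 1$. The key observation is that since $v$ is adjacent to every other vertex, every vertex $w \ne v$ must appear in some bag together with $v$; hence $P(\mathcal{I}, V-\{v\})$ is contained in the union of subtrees $P(\mathcal{I}, w)$ over $w\ne v$, each of which meets the subtree $P(\mathcal{I}, v)$. By condition (T3), $P(\mathcal{I}, v)$ induces a subtree, and in a spaghetti decomposition it moreover induces a \emph{path} in $T$; restricting attention to the nodes of this path (equivalently, contracting $T$ onto it — for each node $x \notin P(\mathcal{I},v)$, the subtrees containing $x$ that also reach $P(\mathcal{I},v)$ can be rerouted to the nearest node of the path), one obtains a path decomposition of $G$ in which $v$ lies in every bag. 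Deleting $v$ from every bag yields a path decomposition of $G-v$ whose width is one less, so $\pw(G-v) \le \sptw(G) - 1$, i.e. $\pw(G-v) + 1 \le \sptw(G)$.

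\textbf{Main obstacle.} The delicate step is Step 2: turning the spaghetti tree decomposition into a genuine \emph{path} decomposition of the whole graph. One must verify carefully that when the branches of $T$ hanging off the path $P(\mathcal{I},v)$ are collapsed onto that path, conditions (T1)--(T3) are preserved for all of $G$ — in particular that no edge of $G$ is lost. This works precisely because every vertex $w \ne v$ shares a bag with $v$, so the portion of $w$'s subtree relevant for covering edges can always be slid onto the path $P(\mathcal{I}, v)$ without creating a gap; the universality of $v$ is exactly what makes the spaghetti (or rooted-path, or directed-path) structure on $v$ force a path structure on the relevant part of $T$. Once Step 2 is in place, combining it with Step 1 and the earlier inequality chain closes all the equalities simultaneously.
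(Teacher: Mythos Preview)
Your approach is correct and essentially the same as the paper's: both hinge on the observation that in any of these decompositions the bags containing the universal vertex $v$ form a path that already constitutes a valid path decomposition of $G$, after which removing $v$ from every bag drops the width by one. The paper phrases this more cleanly by simply \emph{deleting} all bags not containing $v$ (the Helly property for subtrees of a tree guarantees every edge $uw$ still lands in some surviving bag, so no ``rerouting'' is needed) and applies it uniformly to all four decomposition types, whereas you apply it once to the spaghetti case and close the cycle via the known chain of inequalities---but the core idea is identical.
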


\begin{proof}
If we have a special tree decomposition, spaghetti tree decomposition,
directed spaghetti tree decomposition or path decomposition of $G$, we may
assume that all bags contain $v$ because $v$ is adjacent to all vertices of $V-\{v\}$.
All other bags can be deleted.
Such a special tree decomposition, spaghetti tree decomposition, or directed
spaghetti tree decomposition is also a path decomposition, as the bags containing
$v$ form a path. From this observation,
it follows that the first four terms are equal. 

If we take a path decomposition of $G$ with $v$ belonging to each bag of width $k$, we obtain
a path decomposition of $G-v$ of width $k-1$ by removing $v$ from all bags. If
we have a path decomposition of $G-v$, we can obtain one of $G$ by adding $v$ to each bag. This shows that the last two terms are equal.
\end{proof}

\section{Characterizations of Spaghetti Treewidth Two}
\label{section:spaghetti}

    In this section, 
    we characterize the class of graphs of spaghetti treewidth at most two.
    We first define a variant of the trees of cycles to characterize $2$-connected graphs of spaghetti treewidth two.

 	 \begin{definition}
\label{def:chaintree}
A \emph{chain tree of cycles} is a tree of cycles $G=(V,E)$ for which the following holds.
\begin{itemize}
\item If an edge $e \in E$ is contained in $m \geq 3$ chordless cycles of $G$, then at least $m - 2$ of these cycles are simplicial triangles.
\end{itemize}
\end{definition}

 	We show the following theorem. Let $D_3$ be the graph having two specified vertices and 
    three internally vertex-disjoint paths of length three between those vertices;
    see Figure~\ref{fig:pw2}. 
        
 \begin{THM}\label{thm:mainsp}
	Let $G=(V,E)$ be a graph. The following are equivalent.
	\begin{enumerate}
	\item $G$ has spaghetti treewidth at most two.
	\item Each block of $G$ is either a $2$-connected subgraph whose cell completion is a chain tree of cycles,
	or a single edge, or an isolated vertex. 	
	\item $G$ has no minor isomorphic to $K_4$ or $D_3$.
	\end{enumerate}
\end{THM}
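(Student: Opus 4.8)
The plan is to establish the cycle $(1)\Rightarrow(3)\Rightarrow(2)\Rightarrow(1)$, leaning on the known treewidth-two characterization (Theorem~\ref{thm:babette}) and the UV-graph reformulation of spaghetti treewidth.

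\textbf{From (1) to (3).} First I would check that spaghetti treewidth at most two is preserved under taking minors; this follows the standard argument for treewidth, since a spaghetti tree decomposition of $G$ of width $k$ yields one for $G-v$, $G-e$, and $G/e$ (for the contraction, identify the two path-subtrees of the contracted vertices, which remain a path because they share a common node). Then it suffices to verify $\sptw(K_4)\ge 3$ and $\sptw(D_3)\ge 3$. For $K_4$ this is immediate from $\tw(K_4)=3\le\sptw(K_4)$. For $D_3$ I would argue directly: any width-two spaghetti tree decomposition must have, for each of the two ``pole'' vertices $u,w$, a path of bags; the three internal paths of length three each force a sequence of bags, and one shows that the subtrees $P(\mathcal{I},u)$ and $P(\mathcal{I},w)$ would have to branch in three directions, contradicting that each is a path in $T$. (Equivalently: $D_3$ is not a subgraph of any UV graph with clique number $\le 3$, which is what spaghetti treewidth $\le 2$ means.)

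\textbf{From (3) to (2).} Suppose $G$ has no $K_4$ or $D_3$ minor. Since having no $K_4$ minor is inherited by blocks and $\tw\le 2$ iff no $K_4$ minor, every block $B$ of $G$ has treewidth at most two; if $|V(B)|\le 2$ then $B$ is an edge or an isolated vertex, and otherwise $B$ is $2$-connected and by Theorem~\ref{thm:babette} the cell completion $\widetilde{B}$ is a tree of cycles. It remains to show $\widetilde{B}$ is a chain tree of cycles, i.e., if an edge $e=xy$ lies in $m\ge 3$ chordless cycles then at least $m-2$ of them are simplicial triangles. The key observation is that if two of these cycles are not simplicial triangles (so each has a path of length $\ge 2$ between $x$ and $y$ avoiding the others), then together with a third chordless cycle through $e$ (which contributes a further internally disjoint $x$--$y$ path of length $\ge 1$, contractible to length $\ge 2$ unless it is exactly the edge $e$), one can contract to obtain three internally vertex-disjoint paths of length exactly three between $x$ and $y$ — that is, a $D_3$ minor. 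Care is needed when one of the three cycles through $e$ contributes the bare edge $e$ itself; I would handle this by noting a triangle through $e$ that is not simplicial still has its third vertex on another chordless cycle, yielding enough room to route the third path. This is the step I expect to be the main obstacle: getting the case analysis on which cycles through $e$ can be simplicial triangles exactly right, and verifying that ``at least $m-2$ are simplicial'' is precisely the condition that blocks a $D_3$ minor.

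\textbf{From (2) to (1).} Finally I would construct a spaghetti tree decomposition of width two. Since $\sptw$ of a graph is the maximum over its blocks (bags meeting in cut vertices can be glued, and a path-subtree stays a path-subtree when we attach a subtree at an endpoint), it suffices to treat a single $2$-connected block whose cell completion $\widetilde{G}$ is a chain tree of cycles. For this I would build, for each chordless cycle, the natural ``fan'' path decomposition of the cycle on three-vertex bags, and then assemble these along the tree-of-cycles structure: the edge separators are shared edges $xy$, and I arrange the pieces in $T$ so that for every vertex $v$ the collection of bags containing $v$ forms a path. The chain-tree condition (at most two non-triangle cycles per edge) is exactly what guarantees that no vertex is forced into a branching set of bags: at a heavily shared edge $xy$, the at most two ``long'' cycles can be placed on the two sides of a path through the $xy$-bags, while each simplicial triangle through $xy$ only adds a leaf bag $\{x,y,s\}$, which never creates a branch for $x$ or $y$. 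I would verify (T1)--(T3) and the spaghetti condition, and check the width is $\max|B_x|-1=2$, completing the cycle.
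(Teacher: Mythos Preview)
Your cycle $(1)\Rightarrow(3)\Rightarrow(2)\Rightarrow(1)$ is reasonable, and the paper establishes the same equivalences (organized slightly differently, first for $2$-connected graphs via Theorems~\ref{thm:main} and~\ref{thm:sptwmo}, then lifting to general graphs via Lemma~\ref{lem:sptwblock}). However, your argument for $(1)\Rightarrow(3)$ contains a genuine gap.

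You claim that contracting an edge $uv$ preserves the spaghetti property because ``the two path-subtrees of the contracted vertices remain a path because they share a common node.'' This is false: two paths in a tree that meet need not have a path as their union. In a tree shaped like a $Y$ with center $c$ and leaves $a,b,d$, the paths $a\text{--}c\text{--}b$ and $c\text{--}d$ intersect in $c$, but their union is the whole $Y$. So your contraction argument does not go through, and indeed Section~\ref{section:width3} shows that for $k\ge 3$ the class $\{\sptw\le k\}$ is \emph{not} minor-closed, so no soft argument of this kind can work in general. The paper sidesteps this entirely: it proves (Lemma~\ref{lem:h3minor}) that every \emph{subdivision} of $D_3$ has spaghetti treewidth three. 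Since $D_3$ has maximum degree three, a $D_3$ minor in $G$ forces a subdivision of $D_3$ as a subgraph of $G$, and subgraph-monotonicity of $\sptw$ is immediate. You should take this route.

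A smaller issue arises in $(2)\Rightarrow(1)$. Your claim that ``each simplicial triangle through $xy$ only adds a leaf bag $\{x,y,s\}$, which never creates a branch for $x$ or $y$'' is wrong as stated: attaching a leaf bag containing $x$ at an interior node of $P(\mathcal{I},x)$ certainly branches it. What actually works, and what the paper does in Proposition~\ref{prop:chaintree2}, is to insert the bags $\{x,y,s_i\}$ for all the simplicial triangles \emph{in series along a path} between the two body-cycle pieces, so that $P(\mathcal{I},x)$ and $P(\mathcal{I},y)$ remain paths. To make the inductive gluing precise the paper maintains an invariant (via a ``potential set'' and an injective marking function $g$) that certain designated edges of the chain tree correspond to common \emph{endpoints} of the two relevant vertex-paths; you would need an invariant of this sort as well. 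Your $(3)\Rightarrow(2)$ sketch is on the right track but also benefits from the paper's Lemma~\ref{lem:twocompos}, which cleanly turns ``three body cycles through $uv$'' into ``three internally disjoint $u$--$v$ paths of length at least three in $G$'' and avoids the case analysis you anticipate.
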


	We need the following lemma.
		
	\begin{LEM}\label{lem:h3minor}
The spaghetti treewidth of a subdivision of $D_3$ is three.
\end{LEM}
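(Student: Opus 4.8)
The plan is to prove the lower bound $\sptw \geq 3$ directly, since the matching upper bound $\sptw \leq 3$ follows easily: $D_3$ has pathwidth two (as $\widetilde{D_3}$ is a path of cycles by Theorem~\ref{thm:babette2}), hence any subdivision has pathwidth at most... actually a subdivision can only decrease or keep pathwidth small, and in any case one exhibits an explicit spaghetti tree decomposition of width at most three by sweeping along the three paths. So the real content is showing that no spaghetti tree decomposition of width two exists. First I would fix a hypothetical spaghetti tree decomposition $\mathcal{I} = (T, \{B_x\})$ of width at most two of a subdivision $H$ of $D_3$, and name the two branch vertices of $D_3$ as $a$ and $b$ and the three subdivided paths as $Q_1, Q_2, Q_3$, each an induced path from $a$ to $b$ in $H$.

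The key tool is the structure of $P(\mathcal{I}, S)$ for connected subgraphs $S$: by (T3) and connectivity, $P(\mathcal{I}, Q_i)$ is a subtree of $T$ containing both $a$-bags and $b$-bags, so it contains the unique path $R_i$ in $T$ between (the subtree for) $a$ and (the subtree for) $b$. Because $P(\mathcal{I}, a)$ and $P(\mathcal{I}, b)$ are each paths in $T$ (spaghetti condition), and the subgraph $H - a - b$ has exactly three connected components $Q_1 - a - b$, $Q_2 - a - b$, $Q_3 - a - b$, I would argue that the three "bundles" $P(\mathcal{I}, Q_i \setminus \{a,b\})$ must be pairwise internally disjoint except along the $a$-$b$ connection, forcing a genuine branching in $T$: there is a node $t$ of $T$ of degree at least three in the relevant part, off which hang three pieces carrying the interiors of the three paths. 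The main step is then a local counting argument at such a branch node (or along the connecting path). At any node $x$ on the $a$-to-$b$ path in $T$ whose bag meets the interior of $Q_i$, we need $a \in B_x$ or $b \in B_x$ would be forced by connectivity considerations; combined with $|B_x| \leq 3$, at most one interior vertex of one $Q_i$ can sit in $B_x$ alongside both $a$ and $b$ — but then the two $a$-$b$ connecting paths in $T$ through the other two bundles cannot both pass through $x$, contradicting that $R_i$ is forced through the branching.

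More carefully, the cleanest route is: since $\{a\}$ and $\{b\}$ each induce a path in $T$, and each $Q_i$ is connected containing $a,b$, the tree $T$ restricted to $P(\mathcal{I},a) \cup P(\mathcal{I},b)$ together with the forced subtrees gives a subdivision-of-$D_3$ pattern inside $T$ itself; one shows the bag at the point where path $Q_i$'s interior "attaches" must contain a vertex of $Q_i$'s interior plus the two attachment points, which must be $a$ and $b$ — but $a$ and $b$ being non-adjacent in $D_3$ and lying far apart (paths of length three) means their $T$-paths $P(\mathcal{I},a), P(\mathcal{I},b)$ are disjoint, so no single bag contains both; hence the attachment points of the three interiors to the $a$-$b$ structure are three distinct pairs, and a pigeonhole on the three bundles meeting the separator bags yields a bag of size at least four. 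I expect the main obstacle to be making the "forced branching" argument fully rigorous: one must handle the case where $T$ is essentially a path (so the spaghetti condition is automatically satisfied for everything) and show that even then the three internally disjoint $a$-$b$ paths cannot be laid out in width two — here the right lemma is that $D_3$ itself is not an interval graph minor situation, i.e. $\pw(D_3) = 2$ but more precisely that $D_3$ has no path decomposition of width two with the bags ordered so all three branches are "nested", which is exactly the known fact $\pw$ of a subdivision of $D_3$ is... no, $\pw(D_3)=2$. So the genuinely new point is that the *rooted/spaghetti* restriction, even allowing a tree, still fails, and the obstruction is the three-fold branching forcing width three; I would isolate this as: any tree decomposition in which $P(\mathcal{I}, a)$ and $P(\mathcal{I}, b)$ are paths and $P(\mathcal{I}, Q_i)$ are subtrees forces a width-three bag, via the separator count $|N(Q_i^\circ) \cap \{a,b\}| $ analysis above.
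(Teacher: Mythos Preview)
Your proposal never converges to an actual argument, and it contains two concrete errors that derail it.

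First, you assert that ``$a$ and $b$ being non-adjacent \ldots\ means their $T$-paths $P(\mathcal{I},a), P(\mathcal{I},b)$ are disjoint.'' This is false: non-adjacency of two vertices in $G$ places no constraint whatsoever on whether their bag-sets in a tree decomposition intersect. Second, you assert twice that $\pw(D_3)=2$. In fact $\pw(D_3)=3$: the cell completion $\widetilde{D_3}$ consists of three $4$-cycles sharing the edge $ab$, which is \emph{not} a path of cycles (the shared edge lies in three chordless cycles, none of them simplicial triangles), so Theorem~\ref{thm:babette2} gives $\pw(D_3)\ge 3$. Note that if $\pw(D_3)$ were $2$, a path decomposition would itself be a spaghetti tree decomposition of width two and the lemma would be false.

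The paper's proof is a short two-case split on exactly the dichotomy you mishandle. If $P(\mathcal{I},a)\cap P(\mathcal{I},b)=\emptyset$, then some bag $B_y$ with $a\in B_y$ separates $P(\mathcal{I},a)$ from $P(\mathcal{I},b)$ in $T$; each of the three interior paths must cross $B_y$, so $B_y$ contains $a$ together with one internal vertex from each path, giving $|B_y|\ge 4$. This is essentially the ``separator bag'' idea you gesture at. The case you are missing entirely is $P(\mathcal{I},a)\cap P(\mathcal{I},b)\neq\emptyset$. Here that intersection is a path in $T$, and each of the three interiors $U,V,W$ meets it in a nonempty, pairwise disjoint subpath (each such bag already holds $a$ and $b$, leaving room for only one more vertex). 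Whichever interior, say $V$, sits in the middle has at least two vertices --- this is precisely where ``subdivision of $D_3$'' (paths of length $\ge 3$) is used --- hence an edge inside $V$, so $P(\mathcal{I},V)$ must extend off the shared path at some node $x$ to a neighbour $y$. One checks $y\notin P(\mathcal{I},a)\cup P(\mathcal{I},b)$, so $|B_x\cap B_y|\le 1$, and this $1$-vertex separator contradicts the $2$-connectedness of $H$.
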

\begin{proof}
	Let $H$ be a subdivision of $D_3$ consisting of two vertices, say $a$ and $b$, 
	of degree three and three independent paths $a u_1 u_2 \ldots u_l b$, $a v_1 v_2 \ldots v_m b$ and $a w_1 w_2 \ldots w_n b$, 
	where $l,m,n \geq 2$.
	Let $U=\{u_i:1\le i\le l\}$, $V=\{v_i:1\le i\le m\}$, $W=\{w_i:1\le i\le n\}$.
	It is easy to see that $\sptw(H) \leq 3$. 
	We shall prove $\sptw(H) \ge 3$.

	Suppose that $H$ has a spaghetti tree decomposition $\mathcal{I}=(T, \{B_x\}_{x \in V(T)})$ of width two. 
	We may assume that none of $B_x$ is a singleton. 
	
	If $P(\mathcal{I},a)\cap P(\mathcal{I},b) = \emptyset$, then
	there exists $y\in P(\mathcal{I},a)$ which separates $P(\mathcal{I},a)- \{y\}$ from $P(\mathcal{I},b)$ in $T$. 
	Since $u_1$ is contained in some bag of $P(\mathcal{I},a)$ and $u_l$ is contained in some bag of $P(\mathcal{I},b)$, 
	 $B_y$ contains some $u_i$. 
	 Similarly, $B_y$ contains some $v_j$ and $w_k$. As $a\in B_y$, the size of $B_y$ is at least four. It contradicts to that the width of $\mathcal{I}$ is two.

	Now we assume that $P(\mathcal{I},a)\cap P(\mathcal{I},b) \neq \emptyset$. 
	Note that $P(\mathcal{I},a)\cap P(\mathcal{I},b)$ forms a path in $T$.
	By the same reason as above, 
	\[U' := (P(\mathcal{I},a)\cap P(\mathcal{I},b)) \cap P(\mathcal{I},U) \neq \emptyset.\] 
	Similarly, we obtain
	\[V' := (P(\mathcal{I},a)\cap P(\mathcal{I},b)) \cap P(\mathcal{I},V)\neq \emptyset,\] 
	\[W' := (P(\mathcal{I},a)\cap P(\mathcal{I},b)) \cap P(\mathcal{I},W)\neq \emptyset.\] 
	Since every bag of $P(\mathcal{I},a)\cap P(\mathcal{I},b)$ has two vertices $a$ and $b$, no two of $U', V', W'$ have a common vertex.
	We may assume that $V'$ lies between $U'$ and $W'$ in $P(\mathcal{I},a)\cap P(\mathcal{I},b)$.
	Since $H[V]$ has at least one edge, 
	we must have $P(\mathcal{I},V)- V'\neq \emptyset$.
	We choose $x\in V'$ and $y\in P(\mathcal{I},V)- V'$ such that they are neighbors in $T$.
	Then $\abs{B_x \cap B_y}\leq 1$.
 Since $\abs{B_y}\ge 2$, it contradicts to the $2$-connectedness of $H$. 
	\end{proof}

	The following lemma is a key lemma to obtain a subdivision of $D_3$ as a subgraph.
	
	  \begin{LEM}\label{lem:twocompos}
    Let $k\ge 1$. Let $G=(V,E)$ be a $2$-connected graph and let $u, v\in V$.
    If $\widetilde{G}[V-\{ u, v\}]$ has $k$ components of size at least two,
    then $G$ has $k$ internally vertex-disjoint paths of length at least three from $u$ to $v$.
    \end{LEM}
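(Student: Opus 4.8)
The plan is to prove the statement one component at a time. Since $\widetilde{G}$ is obtained from $G$ by adding edges, each component of $\widetilde{G}[V-\{u,v\}]$ is a union of components of $G[V-\{u,v\}]$. Let $C_1,\dots,C_k$ be the components of $\widetilde{G}[V-\{u,v\}]$ of size at least two; these sets are pairwise disjoint, so it suffices to build, for each $C_i$, a $u$--$v$ path in $G$ of length at least three whose internal vertices all lie in $C_i$: the resulting $k$ paths are then automatically internally vertex-disjoint. To this end I would first locate inside $C_i$ a single component $C_i'$ of $G[V-\{u,v\}]$ with $\abs{C_i'}\ge 2$, and then route the path through $C_i'$.

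The routing step is the easy half. Let $C'$ be any component of $G[V-\{u,v\}]$ with $\abs{C'}\ge 2$. Since $G$ is $2$-connected, every component of $G[V-\{u,v\}]$ contains a neighbour of $u$ and a neighbour of $v$, so $N_G(u)\cap C'\ne\emptyset$ and $N_G(v)\cap C'\ne\emptyset$. If one can choose $a\in N_G(u)\cap C'$ and $b\in N_G(v)\cap C'$ with $a\ne b$, then, as $G[C']$ is connected, the edge $ua$, an $a$--$b$ path inside $C'$, and the edge $bv$ concatenate to a $u$--$v$ path of length at least three with interior in $C'$. The only way such a choice can fail is $N_G(u)\cap C'=N_G(v)\cap C'=\{a\}$ for a single vertex $a$; but then every vertex of the nonempty set $C'\setminus\{a\}$ has all of its $G$-neighbours in $C'$, so $C'\setminus\{a\}$ is a union of components of $G-a$ avoiding $u$ and $v$, making $a$ a cut vertex of $G$ --- a contradiction.

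The step I expect to be the main obstacle is showing that a component $C$ of $\widetilde{G}[V-\{u,v\}]$ with $\abs{C}\ge 2$ actually contains a component of $G[V-\{u,v\}]$ of size at least two; a priori $C$ could be several singleton components of $G[V-\{u,v\}]$ welded together by edges added in the cell completion. I would argue by contradiction: suppose every component of $G[V-\{u,v\}]$ inside $C$ is a singleton, say $C=\{x_1,\dots,x_m\}$ with $m\ge 2$. As the $x_i$ are pairwise non-adjacent in $G$ while $C$ is connected in $\widetilde{G}$, the cell completion adds some edge $x_ix_j$, which by its definition forces $G[V-\{x_i,x_j\}]$ to have at least three components. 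One checks that $u$ and $v$ must then lie in distinct components of $G[V-\{x_i,x_j\}]$ --- otherwise every other component of $G[V-\{u,v\}]$ (none of which contains $x_i$ or $x_j$) is attached to $u$ or to $v$ and merges in, leaving a single component. Hence there is a third component $M$ of $G[V-\{x_i,x_j\}]$ containing neither $u$ nor $v$. Such an $M$ has no $G$-edge to $u$ or $v$ (otherwise it would be the component of $u$ or of $v$), so its only external neighbours in $G$ lie in $\{x_i,x_j\}$, and there is at least one since $G$ is connected, say $x_i$. Then $M\cup\{x_i\}$ is connected in $G[V-\{u,v\}]$, so the component of $x_i$ there has at least two vertices, contradicting that it is a singleton. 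This contradiction supplies the desired $C_i'\subseteq C_i$, and together with the routing step it finishes the proof.
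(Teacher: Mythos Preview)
Your proof is correct and follows essentially the same strategy as the paper: show that each component of $\widetilde{G}[V-\{u,v\}]$ of size at least two contains an edge of $G$ (equivalently, a non-singleton component of $G[V-\{u,v\}]$), and then route a long $u$--$v$ path through it using 2-connectedness. The paper's argument for the key claim is slightly shorter---it observes directly that every vertex of $V-\{x,y\}$ is connected to $u$ and to $v$ in $G-\{x,y\}$, contradicting the existence of three components---which is exactly your ``otherwise'' case, so your subsequent analysis of a third component $M$, while correct, is not needed.
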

 	\begin{proof}
	We claim that if $\widetilde{G}[V-\{ u, v\}]$ has a component $H$ such that $\abs{V(H)}\ge 2$,
     then $G[V(H)]$ has an edge.
     Suppose that $G[V(H)]$ has no edges. 
    Since $\abs{V(H)}\ge 2$, there are two vertices $x$ and $y$ of $G[V(H)]$ such that $xy\in E(\widetilde{G})- E$.
	From the definition of a cell completion,
	$G[V-\{x,y\}]$ has at least three components.
	It leads a contradiction because every vertex in $V- \{x, y\}$ is connected to $u$ and $v$ in $G[V-\{x,y\}]$.
	
 	We assume that $\widetilde{G}[V-\{u,v\}]$ has $k$ components $G_1$, $G_2, \ldots, G_k$ where each $G_i$ has at least two vertices.
	By the claim, 
	each $G[V(G_i)]$ has a component $G_i'$ having at least one edge.
	Since $G$ is $2$-connected, 
	$G$ has $k$ internally vertex-disjoint paths of length at least three from $u$ to $v$ along each $G_1'$, $G_2', \ldots, G_k'$, as required.
     \end{proof}

\subsection{Characterization with Cycle Model}

We characterize $2$-connected graphs of spaghetti treewidth two in terms of trees of cycles.
 	
    \begin{THM}\label{thm:main}
	Let $G=(V,E)$ be a $2$-connected graph. Then  
	$G$ has spaghetti treewidth two if and only if 
	the cell completion $\widetilde{G}$ of $G$ is a chain tree of cycles. 	
\end{THM}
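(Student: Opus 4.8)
The plan is to prove Theorem~\ref{thm:main} in two directions, exploiting the characterization of treewidth-two $2$-connected graphs via cell completions being trees of cycles (Theorem~\ref{thm:babette}), so that the only real work is to match the ``chain tree of cycles'' condition against the existence of a width-two spaghetti tree decomposition. Since $\sptw(G)\geq \tw(G)$, if $\sptw(G)=2$ then $\tw(G)=2$, so $\widetilde{G}$ is already a tree of cycles by Theorem~\ref{thm:babette}; it remains to check the extra structural condition of Definition~\ref{def:chaintree}. I would do this by contrapositive: suppose some edge $e$ of $\widetilde{G}$ lies in $m\geq 3$ chordless cycles of which at least three are not simplicial triangles (equivalently, fewer than $m-2$ are simplicial triangles). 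I would then argue that $G$ contains a subdivision of $D_3$ as a subgraph, whence $\sptw(G)\geq 3$ by Lemma~\ref{lem:h3minor}. To build the $D_3$-subdivision, let $e=xy$; each non-simplicial chordless cycle through $e$ contributes, in $\widetilde{G}$, an $x$--$y$ path through a component of $\widetilde{G}[V-\{x,y\}]$ of size at least two, and Lemma~\ref{lem:twocompos} converts the three such components into three internally vertex-disjoint paths of length at least three from $x$ to $y$ in $G$ itself — exactly a subdivision of $D_3$. The point requiring care is that ``at least $m-2$ are simplicial triangles'' fails precisely when at least three of the cycles through $e$ are body cycles (or simplicial triangles not containing $e$ as their only separator edge), and that each such cycle genuinely sits in a distinct component of $\widetilde G[V-\{x,y\}]$ of size $\geq 2$; this needs the basic facts recalled right before Figure~\ref{fig:mamba} about how edge separators distribute among chordless cycles of a tree of cycles.

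For the converse, I would assume $\widetilde{G}$ is a chain tree of cycles and construct a spaghetti tree decomposition of $G$ (hence of $\widetilde G\supseteq G$, since width-two decompositions of a supergraph restrict to the subgraph) of width two. The natural approach mirrors the cycle-model constructions already used for treewidth two and pathwidth two: take the tree structure organizing the chordless cycles, and for each chordless cycle $C$ with its (at most many, but controlled) edge separators, lay the vertices of $C$ along a path of bags of size at most three, one bag per edge of $C$, with consecutive bags sharing two vertices. The edge-separator condition in Definition~\ref{def:chaintree} — that any edge in $m\geq 3$ cycles has at least $m-2$ of them simplicial triangles — is exactly what guarantees that at each separator edge $e=xy$, at most two of the ``branches'' hanging off $e$ are substantial (body cycles or long cycles), while the remaining $m-2$ simplicial triangles each contribute just a single simplicial vertex that can be absorbed into one extra bag $\{x,y,\text{(that vertex)}\}$ attached anywhere along the $xy$-interval. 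I would then verify (T1)–(T3) and the spaghetti condition: for a vertex $v$ lying on a single chordless cycle, the bags containing $v$ form a short path by construction; for a vertex $v$ that is an endpoint of a separator edge (possibly lying on many cycles), the bags containing $v$ are those indexed by the at-most-two substantial branches plus the simplicial-triangle bags, all of which can be arranged to meet along a common path through the ``hub'' of $v$ — here the unrooted, undirected nature of spaghetti decompositions is what makes this work where special treewidth would fail.

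The main obstacle I anticipate is the converse construction, specifically checking the spaghetti path condition (T3$'$: bags containing each $v$ form a path, not merely a subtree) globally rather than cycle-by-cycle. Locally within one chordless cycle it is immediate, but a vertex can be the common endpoint of up to two body cycles and arbitrarily many simplicial triangles sharing that separator edge, and one must route the decomposition tree so that all the bags through $v$ genuinely line up on a path. I expect the right bookkeeping is to process the tree of cycles from a chosen root cycle outward, maintaining as an invariant that whenever we attach a new cycle $C$ at a separator edge $e=xy$, the current decomposition already contains a bag equal to or containing $\{x,y\}$ that is an \emph{endpoint} of both the $x$-path and the $y$-path of bags built so far, so that the new branch for $C$ can be appended there without creating a branching point for $x$ or for $y$; the chain-tree condition (at most two body cycles per edge) is precisely what keeps this possible, since a third body cycle would force a $T$-shaped arrangement of bags around $\{x,y\}$. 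Once this invariant is set up correctly, the verification of (T1), (T2), and the width bound is routine, and Lemma~\ref{lem:twocompos} together with Lemma~\ref{lem:h3minor} closes the forward direction cleanly.
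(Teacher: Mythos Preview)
Your proposal is correct and follows essentially the same approach as the paper. The forward direction is identical (Theorem~\ref{thm:babette} plus Lemma~\ref{lem:twocompos} and Lemma~\ref{lem:h3minor}); for the converse, the paper formalizes exactly the endpoint invariant you describe by introducing a \emph{potential set} $\mathcal{P}$ of simplicial triangles and an injective map $g$ from the non-separator edges of $\mathcal{G}(G)\cup\mathcal{P}$ into $V(T)$ such that $g(uv)$ is a common endpoint of $P(\mathcal{I},u)$ and $P(\mathcal{I},v)$, then inducts on the number of edge separators rather than processing outward from a root cycle --- the same bookkeeping in slightly different packaging.
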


	We first show that if a $2$-connected graph $G$ has spaghetti treewidth two,
    then $\widetilde{G}$ is a chain tree of cycles.
    	
    \begin{PROP}\label{prop:main3}
    Let $G=(V,E)$ be a $2$-connected graph.
    If $G$ has spaghetti treewidth two,
    then $\widetilde{G}$ is a chain tree of cycles.
    \end{PROP}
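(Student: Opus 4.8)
The plan is to argue by contrapositive: assume $\widetilde{G}$ is not a chain tree of cycles and produce a certificate that $\sptw(G) \ge 3$. By Theorem~\ref{thm:babette}, since $G$ is $2$-connected and $\sptw(G) \le \pw(G)$ would not immediately help, I first recall that $\sptw(G) \le 2$ forces $\tw(G) \le 2$ (as $\tw \le \sptw$ and the only $2$-connected obstruction to $\tw \le 2$ is $K_4$, a $K_4$-minor would already finish the argument). So I may assume $\widetilde{G}$ is a tree of cycles, and the only way it fails to be a \emph{chain} tree of cycles is that the extra condition in Definition~\ref{def:chaintree} is violated: there is an edge $e = xy \in E(\widetilde{G})$ contained in $m \ge 3$ chordless cycles of $\widetilde{G}$, of which at least three, say $C_1, C_2, C_3$, are \emph{not} simplicial triangles, i.e.\ each $C_i$ is a body cycle and hence has a vertex outside $\{x,y\}$ together with enough structure to guarantee the internal path is ``long.''

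Next I would translate this local picture into a $D_3$-subdivision inside $G$ itself, using Lemma~\ref{lem:twocompos}. The point is that in $\widetilde{G}$, the edge $e = xy$ lying in $\ge 3$ chordless body cycles means that $\widetilde{G}[V - \{x,y\}]$ has at least three connected components of size at least two: each body cycle $C_i$ through $e$ contributes a component containing its internal vertices, and being a body cycle (not a simplicial triangle) these internal parts each have at least two vertices. (A small amount of care is needed here to check that the internal vertices of distinct $C_i$ really lie in distinct components of $\widetilde{G}[V-\{x,y\}]$ and that each such component has size at least two — this is where the definitions of tree of cycles, edge separator, and body cycle get unpacked.) Applying Lemma~\ref{lem:twocompos} with $(u,v) = (x,y)$ and $k = 3$, we get three internally vertex-disjoint paths of length at least three from $x$ to $y$ in $G$, i.e.\ $G$ contains a subdivision of $D_3$ as a subgraph.

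Finally, I invoke Lemma~\ref{lem:h3minor}: the spaghetti treewidth of any subdivision of $D_3$ is exactly three. Since spaghetti treewidth is minor-monotone (a subgraph is in particular a minor, and the class $\sptw \le k$ is downward closed under subgraphs because deleting vertices/edges only removes constraints from a spaghetti tree decomposition), a subdivision of $D_3$ being a subgraph of $G$ forces $\sptw(G) \ge \sptw(\text{subdivision of } D_3) = 3$. This contradicts $\sptw(G) \le 2$, completing the contrapositive and hence the proof.

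The main obstacle I anticipate is the combinatorial bookkeeping in the middle step: carefully verifying from the definitions of \emph{tree of cycles}, \emph{edge separator}, \emph{simplicial triangle}, and \emph{body cycle} that ``$e$ lies in $\ge 3$ non-simplicial-triangle chordless cycles'' really does yield ``$\widetilde{G}[V-\{x,y\}]$ has $\ge 3$ components of size $\ge 2$.'' One has to rule out degenerate configurations (e.g.\ two of the body cycles sharing more structure than just $e$, or a body cycle whose internal portion collapses to a single vertex in $\widetilde G$), and make sure the recursive structure of Definition~\ref{def:babette3} is being used correctly to see that the internal vertices of the $C_i$ sit in genuinely separate components once $x$ and $y$ are removed. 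Everything else — the reduction to the tree-of-cycles case, the appeal to Lemma~\ref{lem:twocompos}, and the final appeal to Lemma~\ref{lem:h3minor} — should be short and routine.
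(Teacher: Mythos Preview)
Your proposal is correct and follows essentially the same route as the paper: reduce to $\widetilde{G}$ being a tree of cycles via $\tw(G)\le\sptw(G)\le 2$ and Theorem~\ref{thm:babette}, locate an edge separator lying in three body cycles, pass to three components of size $\ge 2$ in $\widetilde{G}[V-\{x,y\}]$, apply Lemma~\ref{lem:twocompos} to obtain a $D_3$-subdivision in $G$, and conclude with Lemma~\ref{lem:h3minor}. The paper asserts the step ``three body cycles through $uv$ imply three components of size $\ge 2$'' without further comment, so your flagged bookkeeping concern is real but no more of an obstacle here than in the original; it follows from the recursive structure of a tree of cycles (distinct chordless cycles share at most one edge, and a non-simplicial triangle has its third vertex attached to further structure).
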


	\begin{proof}
	Suppose $\sptw(G)=2$.
	Since $G$ is $2$-connected and $\tw(G)\le 2$,
	by Theorem~\ref{thm:babette}, $\widetilde{G}$ is a tree of cycles.
	So, it is sufficient to check that 
	every edge separator of $\widetilde{G}$ is contained in at most two body cycles of $\widetilde{G}$. 
    
	Suppose that an edge separator $uv$ is contained in three body cycles in $\widetilde{G}$.
	So, $\widetilde{G}[V-\{u,v\}]$ has three components having at least two vertices.
	By Lemma~\ref{lem:twocompos}, 
	$G$ has three internally vertex-disjoint paths of length at least three from $u$ to $v$, and
	therefore, $G$ has a subgraph isomorphic to a subdivision of $D_3$.
	By Lemma~\ref{lem:h3minor}, $\sptw(G)\ge 3$, which contradicts to the assumption. 
	\end{proof}

	We prove the other direction.
	\begin{PROP}\label{prop:chaintree2}
	Every chain tree of cycles has spaghetti treewidth two.
	\end{PROP}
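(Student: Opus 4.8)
The plan is to build an explicit spaghetti tree decomposition of width two for any chain tree of cycles $G$, working inductively along the recursive definition of trees of cycles (Definition~\ref{def:babette3}) but keeping careful track of a stronger invariant that lets the decomposition be extended when a new cycle is glued on. Since $G$ is a tree of cycles, it has a tree-like block-cut-type structure whose "cells" are its chordless cycles; the chain-tree condition guarantees that each edge separator lies in at most two body cycles (with any further cycles through it being simplicial triangles). First I would fix a chordless cycle $C_0$ as a root of the cycle-tree and orient the gluing relation away from $C_0$, so that every other chordless cycle $C$ is glued onto exactly one parent edge $e_C$. The natural invariant to carry is: for every chordless cycle $C$ we have a path decomposition of $C$ of width two in which (for the root) one chosen edge, or (for a non-root) its parent edge $e_C$, appears in an \emph{end bag} of the path; moreover, for each edge $e$ of $C$ that is itself the parent edge of some child cycle, some bag of the decomposition contains both endpoints of $e$. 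Since a cycle has pathwidth two (Theorem~\ref{thm:babette2} applied to a single cycle, or directly), such a path decomposition of a single cycle with any prescribed end-edge exists.

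The core of the argument is the gluing step. Suppose a child cycle $C$ is attached along edge $e_C = uv$, which in the parent lives in some bag $B^*$. I take the width-two path decomposition $Q_C$ of $C$ with end bag containing $\{u,v\}$, and I attach the path $Q_C$ to the global tree by joining its end bag to $B^*$ (or by identifying along the common edge, then cleaning up so that (T3) holds: the subtrees carrying $u$ and $v$ stay connected, which is exactly why I insisted $\{u,v\}$ be in an end bag of $Q_C$). Every vertex of $C$ other than $u,v$ appears only inside $Q_C$, and $u,v$ appear in a connected piece. The one subtlety is what happens at an edge separator $e=uv$ shared by two body cycles plus some number $t$ of simplicial triangles: the two body cycles contribute two path decompositions that I want to attach on "opposite sides" of the bag $B^*$ containing $\{u,v\}$, using the two ends of that path; each simplicial triangle $uvw$ just needs one extra bag $\{u,v,w\}$ hung off $B^*$ as a leaf, which is fine because a leaf bag imposes no constraint on the paths of $u$ or $v$. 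This is precisely where the chain-tree hypothesis is used: if three body cycles shared $e$, I would need three "directions" out of the bag $\{u,v\}$ \emph{while keeping the $u$-bags and the $v$-bags each forming a path}, which is impossible — indeed this is the obstruction exploited by Lemma~\ref{lem:h3minor}.

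I expect the main obstacle to be bookkeeping the path condition (T3$'$): "the bags containing $v$ form a path" is a global condition, and when many cycles and triangles are glued around a single separator edge one must verify that no vertex ends up with a genuine branch in its bag-subtree. The clean way to handle this is to prove the invariant stated above by induction on the number of chordless cycles, so that at the moment of gluing $C$ onto $e_C=uv$ one only has to check locally that $u$ and $v$ each have their bag-set remaining a path after attaching $Q_C$ at an appropriate end of the current $u$-path (resp. $v$-path) through $B^*$ — and simplicial triangles, being leaves, never create branching for their apex vertex $w$ (which appears nowhere else) nor for $u,v$. Once the invariant is maintained, the final decomposition has width two by construction, every edge of $G$ (hence of every chordless cycle, hence of $\widetilde G \supseteq G$; note $G$ itself is what we need since $\widetilde G$ is a chain tree of cycles and $G \subseteq \widetilte G$ — actually we may simply build the decomposition for $\widetilde G$ and restrict) is covered, and so $\sptw(G) \le 2$; combined with the fact that a chain tree of cycles contains a cycle and hence has treewidth (so spaghetti treewidth) at least two, we get equality.
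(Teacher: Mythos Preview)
Your proposal has a genuine gap at precisely the point you flag as the main obstacle. You write that each simplicial triangle $uvw$ ``just needs one extra bag $\{u,v,w\}$ hung off $B^*$ as a leaf, which is fine because a leaf bag imposes no constraint on the paths of $u$ or $v$.'' This is false: the leaf bag contains both $u$ and $v$, so if $B^*$ already has neighbours on both sides coming from the two body cycles you attached ``on opposite sides'', then hanging \emph{any} leaf containing $u$ off $B^*$ gives $u$ a degree-three node in its bag-subtree. The spaghetti condition fails for $u$ (and for $v$) as soon as even one simplicial triangle is present at such an edge separator. The later sentence ``simplicial triangles, being leaves, never create branching \ldots\ nor for $u,v$'' repeats the same error.

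The paper's proof repairs exactly this. Rather than hanging the simplicial triangles off $B^*$ as leaves, it threads them \emph{along the path} between the two body pieces: with $w_1,w_2$ the designated end-nodes of the two recursively built decompositions, the tree is assembled as $T^1$--$z_3$--$\cdots$--$z_k$--$T^2$ with $B_{z_i}=\{u,v,w_i\}$, so that the bags containing $u$ (and those containing $v$) still form a single path. To make this gluing possible the paper carries a strictly stronger invariant than yours: condition~(GC) provides, via an injective map $g$ on a set $F(G,\mathcal{P})$ of designated edges, a tree-node $g(uv)$ that is simultaneously an \emph{endpoint} of $P(\mathcal{I},u)$ and of $P(\mathcal{I},v)$. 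Your invariant ``some bag contains both endpoints of $e$'' is too weak---you need a common \emph{free end} of the two vertex-paths, and distinct such nodes for distinct edges. The paper also organises the induction differently (on the number of edge separators, splitting symmetrically at one, rather than rooting the cycle-tree) and introduces the auxiliary notion of a \emph{potential set} to decide, after a split, which simplicial triangle must be promoted to act as a body cycle on an edge separator whose body-cycle count has dropped (Lemma~\ref{lem:potential}). If you correct the simplicial-triangle placement and strengthen your invariant to track common path-ends, your rooted approach can be made to work; as written, the construction breaks.
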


	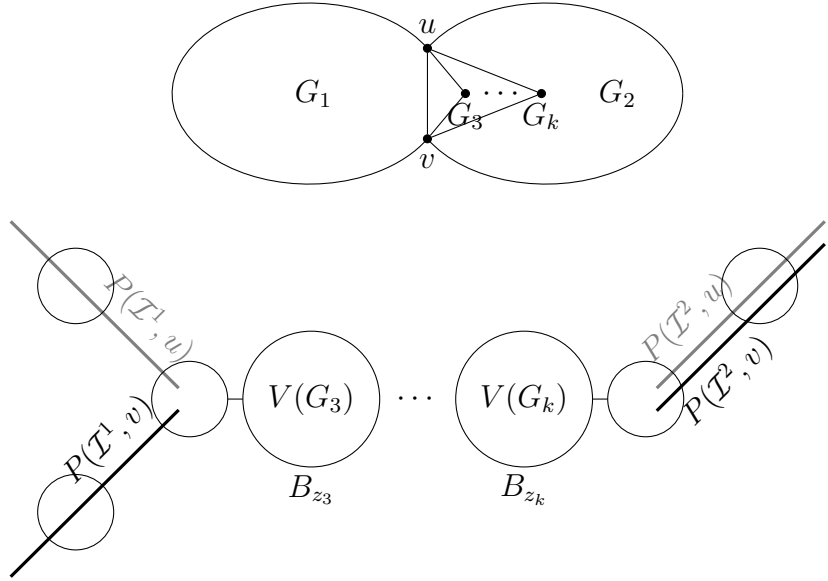
\begin{figure}[t]\centering
 \tikzstyle{v}=[circle, draw, solid, fill=black, inner sep=0pt, minimum width=3pt]
 \begin{tikzpicture}[scale=0.1]
 \draw (15,30) arc (30:330:18 and 12);
 \draw (15,18) arc (210:360:18 and 12);
 \draw (15,30) arc (150:0:18 and 12);  
 \node[v](v1) at (15,30) {}; 
 \node[v](v2) at (20,24) {}; 
 \node[v](v3) at (15,18) {};
 \node[v](v4) at (30,24) {}; 
 \draw (v1) -- (v2)--(v3)--(v1);
 \draw (v1) -- (v4)--(v3);
  \node at (15,33) {$u$}; 
\node at (15,15) {$v$}; 
\node at (20,21) {$G_3$}; 
\node at (30,21) {$G_k$}; 
\node at (0,24) {$G_1$}; 
\node at (40,24) {$G_2$}; 
\node at (25,24) {$\cdots$};

\end{tikzpicture}

\begin{tikzpicture}[scale=0.1]

 \node(v1) at (-10,40) {}; 
 \node(v2) at (15,15) {}; 
 \node(v3) at (-10,-10) {};
 
 \node(v4) at (100,40) {}; 
 \node(v5) at (75,15) {}; 
 \node(v6) at (100,37) {};
 \node(v7) at (75,12) {};

 \draw[gray, very thick] (v1) -- (v2) {node[sloped,above,pos=0.7]{$P(\mathcal{I}^1,u)$}};
 \draw[black, very thick] (v2) -- (v3) {node[sloped,above,pos=0.3]{$P(\mathcal{I}^1,v)$}};
 \draw[gray, very thick] (v4) -- (v5) {node[sloped,above,pos=0.7]{$P(\mathcal{I}^2,u)$}};
 \draw[black, very thick] (v6) -- (v7) {node[sloped,below,pos=0.7]{$P(\mathcal{I}^2,v)$}};
  
\draw (0,30) circle (5);
\draw (15,15) circle (5);
\draw (0,0) circle (5);
\draw (31,15) circle (9);

 \node at (45,15) {$\cdots$}; 
\node at (31,15) {$V(G_3)$}; 
\node at (59,15) {$V(G_k)$}; 
\node at (31,3) {$B_{z_3}$};
\node at (59,3) {$B_{z_k}$};

\draw (59,15) circle (9);
\draw (75,15) circle (5);
\draw (90,30) circle (5);
 
\draw (20,15) -- (22,15);
\draw (70,15) -- (68,15);

\end{tikzpicture}
\caption{The induction step in Proposition~\ref{prop:chaintree2}.}\label{fig:merging}
\end{figure}

	For a tree of cycles $G$, 
	let $\mathcal{G}(G)$ be the set of body cycles of $G$ and let $\mathcal{D}(G)$ be the set of simplicial triangles of $G$.
	A subset $\mathcal{P}$ of $\mathcal{D}(G)$ is called a \emph{potential set} of $G$ if 
	each edge separator of $G$ is contained in exactly two cycles of $\mathcal{G}(G)\cup \mathcal{P}$.
	 Briefly, we will show that 
	 a chain tree of cycles $G$ with a fixed potential set $\mathcal{P}$ admits a special type of a spaghetti tree decomposition.
	For a potential set $\mathcal{P}$ of $G$,
	let $F(G, \mathcal{P})$ be the set of all non-edge separator edges contained in cycles of $\mathcal{G}(G)\cup \mathcal{P}$.

		 \begin{LEM}\label{lem:potential}
	 Let $G=(V,E)$ be a chain tree of cycles  with a potential set $\mathcal{P}$, and let $uv$ be an edge separator of $G$. 
	 Let $H$ be a component of $G[V-\{u,v\}]$ such that  $H':=G[V(H)\cup \{u, v\}]$ and $H'$ is not a chordless cycle. 
	 Let $C'$ be the chordless cycle of $H'$ containing the edge $uv$.
	 Then 
	 \begin{enumerate}
	 \item $\{C : C\in \mathcal{P}, V(C)\subseteq V(H')\}\cup \{C'\}$ is a potential set of $H'$ if $C'$ is a triangle having two edge separators in $G$, and
	 \item $\{C : C\in \mathcal{P}, V(C)\subseteq V(H')\}$ is a potential set of $H'$ if otherwise.
	 \end{enumerate}
	 \end{LEM}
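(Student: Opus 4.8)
The plan is to verify directly the two defining conditions of a potential set for the tree of cycles $H'$: \textbf{(a)} every cycle in the proposed set $\mathcal{P}'$ is a simplicial triangle of $H'$, and \textbf{(b)} every edge separator of $H'$ lies in exactly two cycles of $\mathcal{G}(H')\cup\mathcal{P}'$. Everything rests on first describing how the structure of $G$ restricts to $H'$. Since $H'=G[V(H)\cup\{u,v\}]$ with $H$ a component of $G[V-\{u,v\}]$ and $uv\in E$, the graph $H'$ is again a $2$-connected tree of cycles, and its chordless cycles are exactly the chordless cycles of $G$ contained in $V(H')$; moreover, any chordless cycle of $G$ through an edge of $H'$ other than $uv$ stays inside $V(H')$ (to leave the component $H$ a cycle must reach $u$ and $v$, hence use the chord $uv$, and the only such cycle lying in $V(H')$ is $C'$). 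From this I extract the two facts that drive the proof: \textbf{(i)} $uv$ lies in the single chordless cycle $C'$ of $H'$, so $uv$ is \emph{not} an edge separator of $H'$; and \textbf{(ii)} for every edge $e\neq uv$ of $H'$, the set of chordless cycles through $e$ is the same in $G$ and in $H'$, so $e$ is an edge separator of $H'$ exactly when it is one of $G$. Hence the edge separators of $H'$ are precisely the edge separators of $G$ that are edges of $H'$, with $uv$ deleted from the list.

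For \textbf{(a)}, observe that every vertex $x\in V(H)=V(H')\setminus\{u,v\}$ has $N_{H'}(x)=N_G(x)$, so it is simplicial in $H'$ iff it is simplicial in $G$, whereas $u$ and $v$, being endpoints of the edge separator $uv$ of $G$, are never simplicial in $G$. Thus for $C\in\mathcal{P}$ with $V(C)\subseteq V(H')$ its $G$-simplicial vertex lies in $V(H)$ and remains simplicial in $H'$, so $C\in\mathcal{D}(H')$; in case~$2$ this disposes of \textbf{(a)} (there $\mathcal{P}'$ consists only of such cycles). In case~$1$ it remains to see that $C'\in\mathcal{D}(H')$: here $C'$ is a triangle with two edge separators in $G$, one being $uv$, so by (i)–(ii) it has exactly one edge separator in $H'$, and I will use the elementary fact that a triangle of a tree of cycles carrying at most one edge separator is a simplicial triangle — the vertex not incident with the separating edge has its two incident edges contained in no other chordless cycle, so by the tree-of-cycles structure it has degree two with two adjacent neighbours, hence is simplicial. (In case~$1$ that triangle was a \emph{body} cycle of $G$, since it had two edge separators, which is exactly why it is legitimate to move it into $\mathcal{P}'$.)

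For \textbf{(b)}, fix an edge separator $e\neq uv$ of $H'$ and let $S_e$ be its set of chordless cycles, the same in $G$ and $H'$ by (ii); since $\mathcal{P}$ is a potential set of $G$, exactly two members of $S_e$ belong to $\mathcal{G}(G)\cup\mathcal{P}$. It suffices to prove that for every $D\in S_e$ one has $D\in\mathcal{G}(H')\cup\mathcal{P}'$ iff $D\in\mathcal{G}(G)\cup\mathcal{P}$, which then gives exactly two such cycles in $H'$ as well. For $D\neq C'$: membership in $\mathcal{P}'$ coincides with membership in $\mathcal{P}$; and $D$ is a body cycle of $H'$ iff of $G$ — clear unless $D=\{x,y,z\}$ is a triangle with $e=xy$, where $x,y$ are non-simplicial in both graphs (endpoints of $e$), and the third vertex $z$ is either in $V(H)$ (so has the same simpliciality in $G$ and $H'$) or in $\{u,v\}$; in the latter case $z$ is non-simplicial in $G$, and also in $H'$, since $N_{H'}(z)$ contains three pairwise distinct vertices ($x$, $y$, and the other of $\{u,v\}$, which differs from $x,y$ as otherwise $D$ would contain $uv$ and hence equal $C'$), so simpliciality of $z$ would force a $K_4$ subgraph in the treewidth-two graph $H'$. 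For $D=C'$: its classification in $G$ versus $H'$ is controlled by its number of edge separators, which by (i)–(ii) drops by exactly one; a short case check over $C'$ not a triangle, $C'$ a triangle with one edge separator in $G$ (namely $uv$, so $C'$ is then a simplicial triangle of $G$ by the elementary fact above, and of $H'$, and lies in $\mathcal{P}'$ iff in $\mathcal{P}$), $C'$ a triangle with two edge separators (case~$1$, handled above), or $C'$ a triangle with three edge separators (a body cycle of both $G$ and $H'$) verifies the equivalence in each instance. Hence $S_e\cap(\mathcal{G}(H')\cup\mathcal{P}')=S_e\cap(\mathcal{G}(G)\cup\mathcal{P})$ has exactly two elements, proving \textbf{(b)} and the lemma.

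The step I expect to be the main obstacle is precisely the bookkeeping around the gateway cycle $C'$: passing from $G$ to $H'$ demotes $uv$ from being an edge separator and can turn $u$ or $v$ into a simplicial vertex, so $C'$ may flip its type from body cycle to simplicial triangle. This is exactly why the statement splits into two cases, and getting the case analysis on the number of edge separators of $C'$ exactly right — and matching it against whether $C'\in\mathcal{P}$ — is the delicate part; by contrast, once facts (i) and (ii) are established, the treatment of all cycles other than $C'$ is routine.
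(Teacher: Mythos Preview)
Your proof is correct and follows essentially the same strategy as the paper's: a direct verification, hinging on the observation that $uv$ ceases to be an edge separator in $H'$ while every other edge of $H'$ keeps the same set of chordless cycles as in $G$, so the only place where the body/simplicial classification can change is at $C'$. The paper's proof is a three-line sketch that dispatches the case where $C'$ remains a body cycle of $H'$ with ``there is nothing to prove'' and only spells out the triangle-with-two-edge-separators case; your write-up makes all of the implicit verifications explicit (facts (i) and (ii), preservation of simpliciality for vertices of $V(H)$, the $K_4$-free argument for $u,v$ in $H'$). One minor remark: your subcase ``$C'$ a triangle with one edge separator in $G$'' is actually vacuous, since then the other two edges of $C'$ would not be edge separators and $H'$ would equal $C'$, contradicting the hypothesis that $H'$ is not a chordless cycle --- this is why the paper starts by noting $C'$ has at least two edge separators in $G$.
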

	 \begin{proof}
	Since $H'$ is not a chordless cycle, $C'$ has at least two edge separators in $G$.
	 If either $C'$ is a cycle of length at least $4$ or $C'$ has three edge separators in $G$,
	  then $C'$ is still a body cycle of $H'$, 
	  and there is nothing to prove. 
	  If $C'$ is a triangle having only one edge separator $f$ other than $uv$  in $G$,
	 then $f$ is contained in exactly one cycle of $\{C : C\in \mathcal{P}, V(C)\subseteq V(H')\}$, and $C'$ is a simplicial triangle of $H'$.
	 Thus, $\{C : C\in \mathcal{P}, V(C)\subseteq V(H')\}\cup \{C'\}$ is a potential set of $H'$.
	 \end{proof}

	\begin{proof}[Proof of Proposition~\ref{prop:chaintree2}]
	Let $G=(V,E)$ be a chain tree of cycles.
	Since $G$ is $2$-connected,
	it is sufficient to show that $\sptw(G)\le 2$.
	Let $\mathcal{P}$ be a potential set of $G$.
	We claim that 
	$G$ has a spaghetti tree decomposition $\mathcal{I}=(T,\{B_x\}_{x\in V(T)})$ of width two 
	such that
	\begin{enumerate}
	\item[(GC)] there exists an injective function $g$ from $F(G, \mathcal{P})$ to $V(T)$ where
	 $P(\mathcal{I},u)$ and $P(\mathcal{I},v)$ have a common end vertex  $g(uv)$ in $T$ for $uv\in F(G, \mathcal{P})$.
	 \end{enumerate}
	We prove it by induction on the number of edge separators of $G$.

	If $G$ has no edge separators, then $G$ is a chordless cycle. Let $G$ be a chordless cycle $c_1c_2c_3 \cdots c_mc_1$ for some $m\ge 3$. 
	We construct a tree decomposition $\mathcal{I}=(T,\{B_x\}_{x\in V(T)})$ of $G$ and define a function $g$ from $E$ to $V(T)$ such that
	\begin{itemize}[label={--}]
	\item $T$ is a path $p_0p_1p_2p_3 \cdots p_{m-1}$,
	\item $B_{p_0}=\{c_m, c_1\}$, $B_{p_{m-1}}=\{c_m,c_{m-1}\}$, 
	\item for each $1\le i\le m-2$, $B_{p_i}=\{c_m,c_i,c_{i+1}\}$, and
	\item $g$ is the function from $E$ to $V(T)$ such that
	$g(c_mc_1)=p_0$ and $g(c_ic_{i+1})=p_i$ for all $1\le i\le m-1$.
	\end{itemize}
	We can easily check that 
	$\mathcal{I}$ is a spaghetti tree decomposition of $G$ having width two and
	it satisfies the condition (GC).

	Now suppose that $G$ has an edge separator $uv$.
	Let $H_1, H_2, \ldots, H_k$ be the components of $G[V-\{u,v\}]$ where $k\ge 2$ and
	let $G_i=G[V(H_i)\cup \{u, v\}]$.
	Note that exactly two graphs of $\{G_i\}_{1\le i\le k}$ have a cycle in 
	$\mathcal{G}(G)\cup \mathcal{P}$.
	Without loss of generality, 
	we assume that $G_1$ and $G_2$ have a cycle in 
	$\mathcal{G}(G)\cup \mathcal{P}$.

	We first check that for each $j\in \{1,2\}$, $G_j$ admits a spaghetti tree decomposition satisfying the condition (GC).
	We may assume that $G_j$ is not a chordless cycle.
	For each $j\in \{1,2\}$, let 
	$\mathcal{P}_{j}=\{C : C\in \mathcal{P}, V(C)\subseteq V(G_j)\}$, and
	let $C_j$ be the chordless cycle of $G_j$ containing the edge $uv$.
	We define 
	\begin{itemize}
	 \item $\mathcal{P}'_j:=\mathcal{P}_{j}\cup \{C_j\}$ if $C_j$ is a triangle having two edge separators in $G$,
	 \item $\mathcal{P}'_j:=\mathcal{P}_{j}$ if otherwise.
	\end{itemize}
	In both cases, by Lemma~\ref{lem:potential},
$P_{j}'$ is a potential set of $G_j$
	and $C_j$ is contained in $\mathcal{G}(G_j)\cup \mathcal{P}_j'$.
	By the induction hypothesis, 
	$G_j$ has a spaghetti tree decomposition $\mathcal{I}^j=(T^j,\{B^j_x\}_{x\in V(T^j)})$ of width two such that
	there is an injective function $g_j$ from $F(G_j, \mathcal{P}'_j)$ to $V(T^j)$ which satisfies the condition (GC).
	Let $w_j=g_j(uv)$.

	We construct a new tree decomposition $(T,\{B_x\}_{x\in V(T)})$ and define the function $g$ from $F(G, \mathcal{P})$ to $V(T)$ such that
	\begin{itemize}[label={--}]
	\item $T$ is obtained from the disjoint union of $T^1$, $T^2$ and the path $z_3\cdots z_k$ by adding edges $z_3w_1$, $z_kw_2$, and
	\item for all $1\le i\le 2$ and $x\in V(T_i)$, $B_x=B^i_x$,
	\item for all $3\le i\le k$, $B_{z_i}=V(G_i)$, and
	\item $g(e)=g_i(e)$ if $e\in F(G_i, \mathcal{P}'_i)$ for $i\in \{1,2\}$.
	\end{itemize}
	This case is depicted in Figure~\ref{fig:merging}. 
	Clearly, $P(\mathcal{I},u)$ and $P(\mathcal{I},v)$ form paths in $T$.
	So, $(T,\{B_x\}_{x\in V(T)})$ is a spaghetti tree decomposition of $G$ having width two.
	Because the only $P(\mathcal{I}^j,u)$ and $P(\mathcal{I}^j,v)$ are changed in each $T^j$ and $uv\notin F(G, \mathcal{P})$, 
	$g$ is injective, as required.
	\end{proof}

	\begin{proof}[Proof of Theorem~\ref{thm:main}]
	If $\sptw(G)=2$, then by Proposition~\ref{prop:main3},
	$\widetilde{G}$ is a chain tree of cycles.
	If $\widetilde{G}$ is a chain tree of cycles,
	then by Proposition~\ref{prop:chaintree2},
	$\sptw(\widetilde{G})=2$.
	Since $G$ is $2$-connected and spaghetti treewidth does not increase when taking a subgraph,
	$\sptw(G)=2$.
	\end{proof}

\subsection{The Minor Obstruction Set for Spaghetti Treewidth Two}
	We provide the minor obstruction set for the class of $2$-connected graphs of spaghetti treewidth two. 
	
\begin{THM}\label{thm:sptwmo}
	Let $G=(V,E)$ be a $2$-connected graph. 
	The graph $G$ has spaghetti treewidth two
	if and only if it has no minor isomorphic to $K_4$ or $D_3$.
\end{THM}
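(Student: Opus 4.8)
The plan is to derive Theorem~\ref{thm:sptwmo} from the cycle-model characterization of Theorem~\ref{thm:main} together with Lemmas~\ref{lem:h3minor} and~\ref{lem:twocompos}; the two directions are proved separately. Note first that $2$-connectedness forces $\abs{V}\ge 3$, hence $G$ contains a cycle and $\sptw(G)\ge\tw(G)\ge 2$, so for $2$-connected $G$ the phrases ``spaghetti treewidth at most two'' and ``spaghetti treewidth two'' coincide and Theorem~\ref{thm:main} applies verbatim.

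For the forward direction, suppose $\sptw(G)\le 2$. Since $\tw(G)\le\sptw(G)\le 2$, the graph $G$ has no $K_4$ minor, because the graphs of treewidth at most two are exactly the graphs with no $K_4$ minor. It remains to exclude a $D_3$ minor. As $D_3$ has maximum degree three, a graph having $D_3$ as a minor must in fact contain a subdivision of $D_3$ as a subgraph; by Lemma~\ref{lem:h3minor} such a subgraph has spaghetti treewidth three, and since spaghetti treewidth does not increase when passing to a subgraph, this would give $\sptw(G)\ge 3$, a contradiction. Hence $G$ has neither a $K_4$ nor a $D_3$ minor.

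For the backward direction, assume $G$ is $2$-connected with no $K_4$ and no $D_3$ minor. Having no $K_4$ minor gives $\tw(G)\le 2$, so by Theorem~\ref{thm:babette} the cell completion $\widetilde{G}$ is a tree of cycles. I claim that $\widetilde{G}$ is in fact a chain tree of cycles (Definition~\ref{def:chaintree}). If it were not, some edge separator $uv$ of $\widetilde{G}$ would be contained in at least three body cycles; exactly as argued in the proof of Proposition~\ref{prop:main3}, this forces $\widetilde{G}[V-\{u,v\}]$ to have at least three components of size at least two, whence Lemma~\ref{lem:twocompos} produces three internally vertex-disjoint paths of length at least three between $u$ and $v$ in $G$, i.e.\ a subdivision of $D_3$ as a subgraph and therefore a $D_3$ minor---contradicting our hypothesis. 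So $\widetilde{G}$ is a chain tree of cycles, and Theorem~\ref{thm:main} (equivalently, applying Proposition~\ref{prop:chaintree2} to $\widetilde{G}$ and using subgraph-monotonicity) yields $\sptw(G)=2$.

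The substance of the result lies in Theorem~\ref{thm:main} and its supporting lemmas, so I do not anticipate a real obstacle here; the argument is essentially a translation of the structural condition ``chain tree of cycles'' into the forbidden-minor language. The one point deserving care is the combinatorial step ``an edge separator lying in at least three body cycles $\Rightarrow$ at least three components of size at least two in $\widetilde{G}-\{u,v\}$'', which is precisely the step already carried out inside the proof of Proposition~\ref{prop:main3} and rests on the tree-like manner in which chordless cycles are glued along edges in a tree of cycles.
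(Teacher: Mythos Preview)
Your proof is correct and follows essentially the same route as the paper: both directions use Lemma~\ref{lem:h3minor} and the $K_4$ obstruction for treewidth in the forward direction, and in the backward direction pass through Theorem~\ref{thm:babette}, the failure of the chain-tree condition, and Lemma~\ref{lem:twocompos} to extract a $D_3$ subdivision. Your additional remarks (the observation that $\sptw(G)=2$ for $2$-connected $G$, and the explicit justification that a $D_3$ minor yields a $D_3$ subdivision via the maximum-degree-three fact) are sound elaborations of steps the paper leaves implicit.
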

	\begin{proof}
	Suppose $G$ has spaghetti treewidth two.
	If $G$ has a minor isomorphic to $K_4$, 
	then $\sptw(G)\ge \tw(G)\ge 3$. 
	If $G$ has a minor isomorphic to $D_3$,
	then $G$ has a subgraph isomorphic to a subdivision of $D_3$. 
	By Lemma~\ref{lem:h3minor},
	$\sptw(G)\ge 3$ and it contradicts to our assumption on $G$. 
	
	Now suppose that $G$ has no minor isomorphic to $K_4$ and $\sptw(G)\ge 3$.
	Since $G$ is $2$-connected, by Theorem~\ref{thm:babette} and~\ref{thm:main},
	$\widetilde{G}$ is a tree of cycles but not a chain tree of cycles. 
	So, $\widetilde{G}$ has an edge separator $uv$ such that
	$uv$ is contained in three body cycles.
	Therefore, $\widetilde{G}[V-\{u,v\}]$ has three components having at least two vertices, 
	and by Lemma~\ref{lem:twocompos}, 
	$G$ has three internally vertex-disjoint paths of length at least three from $u$ to $v$.
	Thus, $G$ has a minor isomorphic to $D_3$.
	\end{proof}
	
	Using the following lemma, we have the results for general cases.
	\begin{LEM}\label{lem:sptwblock}
	Let $k$ be a positive integer. 
	A graph has spaghetti treewidth at most $k$
	if and only if every block of it has spaghetti treewidth at most $k$.
	\end{LEM}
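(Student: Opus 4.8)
The plan is to prove both implications; the ``only if'' direction is immediate, while the ``if'' direction goes by induction on the number of blocks, with a small normalization step that is needed to preserve the defining path condition of spaghetti tree decompositions when gluing. For ``only if'': taking a subgraph never increases spaghetti treewidth (immediate from the characterization of $\sptw(G)\le k$ as $G$ being a subgraph of a UV graph of clique number at most $k+1$, and already used in the proof of Theorem~\ref{thm:main}), and every block of $G$ is a subgraph of $G$, so every block has spaghetti treewidth at most $k$.

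For ``if'', I would first reduce to connected graphs: the spaghetti treewidth of a graph equals the maximum of the spaghetti treewidths of its connected components (given spaghetti tree decompositions of the components, join their trees into one tree by adding arbitrary edges between distinct trees; this affects none of the path conditions, since the induced subgraph on the node set of any one component's tree is unchanged), and the blocks of $G$ are exactly the blocks of its components, so it suffices to treat connected $G$. Assume $G$ is connected and argue by induction on the number of blocks. If $G$ has at most one block, then $G$ equals its unique block (an isolated vertex, a single edge, or a $2$-connected graph) and the conclusion is the hypothesis. Otherwise choose a leaf block $B$ of $G$, that is, a block containing exactly one cut vertex $v$ of $G$ (such a block exists, e.g.\ a leaf of the block--cut tree of $G$). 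Put $G':=G-(V(B)\setminus\{v\})$. Then $G'$ is connected, $V(B)\cap V(G')=\{v\}$, $E(G)=E(B)\cup E(G')$, and the blocks of $G'$ are precisely the blocks of $G$ other than $B$; hence every block of $G'$ has spaghetti treewidth at most $k$, so $\sptw(G')\le k$ by the induction hypothesis, while $\sptw(B)\le k$ by hypothesis. Fix spaghetti tree decompositions $\mathcal{I}_B$ of $B$ and $\mathcal{I}'$ of $G'$, each of width at most $k$.

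The core step is to glue $\mathcal{I}_B$ and $\mathcal{I}'$ along $v$. In $\mathcal{I}_B$ the nodes whose bags contain $v$ induce a path; attach to one endpoint of this path a new pendant node with bag exactly $\{v\}$. This keeps $\mathcal{I}_B$ a spaghetti tree decomposition of $B$ of width at most $k$ (the bags of other vertices are untouched and still induce paths, and the bags of $v$ still induce a path, now ending at the new node), and it produces a leaf carrying the bag $\{v\}$ at one end of the $v$-path. Do the same to $\mathcal{I}'$, and then identify the two new leaves. The resulting pair $\mathcal{I}$ is a spaghetti tree decomposition of $G$ of width at most $k$: the underlying graph is a tree (two trees identified at a leaf of each); (T1) and (T2) hold because every vertex and every edge of $G$ lies in $B$ or in $G'$; for the subtree condition and the spaghetti condition, each vertex $w\neq v$ lies in only $V(B)$ or only $V(G')$, so the set of its bags is unchanged and still induces a path, whereas the set of bags containing $v$ is exactly the concatenation, at the identified node, of the $v$-path of $\mathcal{I}_B$ and the $v$-path of $\mathcal{I}'$, hence again a path; and the width is unchanged since the only newly created bag is $\{v\}$.

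I expect the main obstacle, and the reason this lemma is not entirely trivial, to be precisely this gluing: for ordinary tree decompositions one may attach the decomposition of a block at any bag containing the relevant cut vertex, but here the bags containing a cut vertex in the combined decomposition must form a \emph{path}, not merely a subtree, and a single cut vertex can belong to many blocks. The pendant-leaf normalization is what forces the gluing to happen ``end to end'' at degree-one nodes whose bag is the singleton $\{v\}$, so that joining a path to a path still yields a path; everything else is a routine verification of the tree-decomposition axioms.
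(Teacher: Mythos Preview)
Your proof is correct and follows essentially the same approach as the paper: reduce to connected graphs, induct, split at a cut vertex, and glue the spaghetti tree decompositions so that the $v$-paths concatenate into a single path. The paper inducts on the number of cut vertices and glues all components of $G-v$ at once by directly linking endpoints of their $v$-paths, whereas you induct on the number of blocks and peel off one leaf block at a time with a pendant $\{v\}$-leaf normalization; these are cosmetic variants of the same argument.
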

	\begin{proof}
	The forward direction is trivial. 
	For the converse direction, suppose every block of a graph $G$ has spaghetti treewidth at most $k$.
	We may assume that $G$ is connected.
	We prove by induction on the number of cut vertices of $G$.
	We may assume that $G$ has a cut vertex $v$.
	Let $H_1$, $H_2, \ldots, H_k$ be the components of $G-v$ and let $G_i=G[V(H_i)\cup \{v\}]$.
	By the induction hypothesis, there exists a spaghetti tree decomposition $\mathcal{I}_i$ of $H_i$ having width at most $k$.
	We can obtain a new tree decomposition $\mathcal{I}$ from the disjoint union of the decompositions $\mathcal{I}_i$ by just connecting bags among the bags in $\bigcup_{1\le i\le k} P(\mathcal{I}_i, v)$
	so that $P(\mathcal{I}, v)$ forms a path. It shows that $G$ has spaghetti treewidth at most $k$.
	\end{proof}
	\begin{proof}[Proof of Theorem~\ref{thm:mainsp}]
	By Theorem~\ref{thm:main}, (1) implies (2), and with Lemma~\ref{lem:sptwblock}, (2) also implies (1).
	By Theorem~\ref{thm:sptwmo} and Lemma~\ref{lem:sptwblock}, 
	(1) and (3) are equivalent.
	\end{proof}

\section{Characterizations of Strongly Chordal Treewidth Two}
\label{section:stronglychordal}

	In this section, 
    we characterize the class of graphs of strongly chordal treewidth at most two with cycle model and we provide the minor obstruction set for the class. 
    We introduce another variant of a tree of cycles, called a \emph{tree of two-boundaried cycles}.
    The name `two-boundaried' comes from the property that every chordless cycle of it may attach with other chordless cycles on at most two edges.
    
    \begin{definition}
\label{def:treeofthc}
A \emph{tree of two-boundaried cycles} is a tree of cycles $G$ for which the following holds. 
\begin{itemize}
\item Each chordless cycle of $G$ has at most two edge separators.
\end{itemize}
\end{definition}

	We mainly show the following. The graph $S_3$ is depicted in Figure~\ref{fig:pw2}. 
   
   	\begin{THM}\label{thm:mainsc}
	Let $G=(V,E)$ be a graph. The following are equivalent.
	\begin{enumerate}
	\item $G$ has strongly chordal treewidth at most two.
	\item Each block of $G$ is either a $2$-connected subgraph whose cell completion is a tree of two-boundaried cycles, or a single edge, or an isolated vertex.
	\item $G$ has no minor isomorphic to $K_4$ or $S_3$. 	
	\end{enumerate}
\end{THM}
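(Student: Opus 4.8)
The plan is to mirror the structure of the spaghetti-treewidth section (Theorem~\ref{thm:mainsp}), proving the three-way equivalence by establishing the block-level characterization and then reading off the obstruction set. Since strongly chordal treewidth is block-decomposable in the same way as spaghetti treewidth (the clique-number-of-a-strongly-chordal-supergraph definition behaves well under gluing at cut vertices, so a straightforward analogue of Lemma~\ref{lem:sptwblock} holds: $\sctw(G)\le k$ iff every block of $G$ has $\sctw$ at most $k$), it suffices to treat $2$-connected graphs and prove: \emph{a $2$-connected graph $G$ has strongly chordal treewidth two iff $\widetilde{G}$ is a tree of two-boundaried cycles iff $G$ has no $K_4$ or $S_3$ minor.} The block lemma then upgrades this to general graphs, and notably gives obstruction set $\{K_4,S_3\}$ for \emph{general} graphs, not just $2$-connected ones --- unlike the special-treewidth case, because strong chordality does not obstruct arbitrary tree-like attachment of blocks (the three extra obstructions $G_1,G_2,G_3$ for $\spctw$ come from the rooted-path condition, which has no analogue here).

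First I would prove the forbidden-minor direction. If $G$ has a $K_4$ minor then $\sctw(G)\ge\tw(G)\ge 3$. For $S_3$: I would show that any subdivision of $S_3$ has strongly chordal treewidth at least three, analogous to Lemma~\ref{lem:h3minor}. The argument is that $\sctw(G)\le 2$ means $G$ embeds into a strongly chordal graph with clique number $3$; a subdivision of $S_3$ is already chordal only after triangulation, and any chordal supergraph of clique number $3$ must introduce a long even cycle that cannot be given an odd chord without creating a $K_4$. Concretely, $S_3$ itself is chordal with $\omega=3$ but is \emph{not} strongly chordal (its six-cycle has no odd chord), and one checks that every way of adding edges to a subdivision of $S_3$ while keeping $\omega\le 3$ either fails to be chordal or retains a chordless/odd-chord-free even cycle. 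I expect this to be the \textbf{main obstacle}: verifying carefully that the ``three-sun'' structure is robust under subdivision and cannot be absorbed into a width-two strongly chordal supergraph. A clean way is: if $\sctw(G)=2$, take a width-two tree decomposition of the strongly chordal supergraph; show it is essentially a tree-of-cycles decomposition in which no edge separator sits in three body cycles AND no single chordless cycle of the cell completion has three edge separators --- the second condition is exactly what the $S_3$ minor would violate.

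Next I would prove that if $G$ is $2$-connected with no $K_4$ or $S_3$ minor, then $\widetilde{G}$ is a tree of two-boundaried cycles. Since no $K_4$ minor gives $\tw(G)=2$ (or $G$ is a single edge/triangle, handled trivially), Theorem~\ref{thm:babette} says $\widetilde{G}$ is a tree of cycles. It remains to show each chordless cycle has at most two edge separators. Suppose some chordless cycle $C$ of $\widetilde{G}$ has three edge separators $e_1,e_2,e_3$. Each $e_i$ lies in a second chordless cycle on the far side, and because $e_i$ is an edge separator the ``other side'' contains at least two vertices, hence (by Lemma~\ref{lem:twocompos} applied appropriately, or directly from $2$-connectedness) a path in $G$ realizing that edge of $C$ as something of length $\ge 2$, while the three arcs of $C$ between consecutive $e_i$'s likewise have length $\ge 1$ in $G$. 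Routing these gives a subdivision of $S_3$ in $G$ (three ``outer'' paths forming a triangle, each edge of the triangle being an $e_i$ subdivided, plus three ``spokes'' from a central triangle). That contradicts the no-$S_3$-minor assumption.

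Finally I would prove the constructive direction: every tree of two-boundaried cycles $G$ has $\sctw(G)\le 2$, i.e.\ embeds in a strongly chordal graph of clique number $3$. I would build an explicit supergraph $H\supseteq G$ by triangulating each chordless cycle of $G$ in a "fan from one vertex" pattern, chosen coherently across the tree structure so that $H$ is chordal with $\omega(H)=3$; then verify $H$ is strongly chordal, i.e.\ has no odd-chord-free even cycle of length $\ge 6$. The two-boundaried condition (each chordless cycle meets its neighbors in at most two edges) is exactly what makes a compatible fan-triangulation possible: one can orient the block-cut / tree-of-cycles structure and triangulate each cycle toward a vertex of the ``incoming'' separator edge, ensuring every long induced cycle in $H$ would have to pass through the apex of some fan and thus acquires the required odd chord. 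This is the technical heart of the construction; I would carry it out by induction on the number of edge separators, paralleling Proposition~\ref{prop:chaintree2}: a single cycle is trivially handled (triangulate as a fan --- a fan triangulation of a cycle is an interval graph, hence RDV, hence strongly chordal with $\omega=3$), and when removing an edge separator $uv$ the pieces $G_i=G[V(H_i)\cup\{u,v\}]$ each inherit the two-boundaried property, so by induction each has a strongly chordal supergraph of clique number $3$ in which $uv$ is an edge; gluing these along the edge $uv$ and noting that gluing strongly chordal graphs along an edge (a clique) preserves strong chordality and does not raise the clique number, we get the desired $H$. Combining the three directions with the block lemma yields the theorem.
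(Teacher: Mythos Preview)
Your overall architecture is sound, but there is a real gap in the direction ``$\sctw(G)\le 2 \Rightarrow$ no $S_3$ minor''. You propose to mimic Lemma~\ref{lem:h3minor} and show that every \emph{subdivision} of $S_3$ has $\sctw\ge 3$. That is not enough: $S_3$ has maximum degree four, so containing $S_3$ as a minor does \emph{not} imply containing a subdivision of $S_3$ as a subgraph. The paper makes this point explicitly (``Unlike $D_3$, it seems to be tedious to characterize the subgraph minimal graphs containing $S_3$ as a minor, because $S_3$ has maximum degree four'') and resolves it by a different route: it proves directly that the class $\{G:\sctw(G)\le 2\}$ is closed under minors (Proposition~\ref{prop:contraction}, via Proposition~\ref{prop:contractionsc}, which shows that contracting an edge in a strongly chordal graph with $\omega\le 3$ yields a strongly chordal graph). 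With minor-closedness in hand, the trivial computation $\sctw(S_3)=3$ (Lemma~\ref{lem:sctws3}) suffices. Your ``clean way'' paragraph gestures at a structural shortcut, but note that even the paper's proof of Proposition~\ref{prop:nominorsimple} (the implication $\sctw(G)=2\Rightarrow \widetilde{G}$ is a tree of two-boundaried cycles) goes through ``find an $S_3$ minor, then invoke minor-closedness''; you would need a genuinely different argument to avoid it. Concretely, when a chordless cycle $C$ of $\widetilde{G}$ has three edge separators, the configuration of $C$ together with the paths from Lemma~\ref{lem:avoidingcycle} yields an $S_3$ \emph{minor} (one must contract an arc of $C$ to produce the third degree-four vertex), not in general a subdivision.

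On the constructive side your approach differs from the paper's and is worth noting. The paper (Proposition~\ref{prop:simpletree2}) triangulates every chordless cycle of the tree of two-boundaried cycles simultaneously so that each triangle has a private edge, and then invokes Farber's forbidden-sun characterization (Theorem~\ref{thm:charsc}) to certify strong chordality in one shot. You instead propose induction on edge separators, gluing strongly chordal supergraphs along the edge $uv$. This can be made to work, but you must prove the lemma you are implicitly using: a clique-sum of two strongly chordal graphs along a clique is strongly chordal. That is true, but it is not quite as immediate as the chordal case and deserves a line of justification (e.g.\ via Farber's theorem: any induced sun in the glued graph would have its inner clique meeting the separating clique in at most two vertices, forcing the outer independent vertex between them to lie on one side and the rest of the sun on the other, a contradiction). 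The paper's global triangulation avoids this auxiliary lemma.
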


    Unlike $D_3$, 
    it seems to be tedious to characterize the subgraph minimal graphs containing $S_3$ as a minor, because $S_3$ has maximum degree four.
    So, we first show that the class of graphs of strongly chordal treewidth at most two is closed under taking minors.

	We use the following fact that $S_3$ has strongly chordal treewidth three. 
    
     \begin{LEM}\label{lem:sctws3}
   The strongly chordal treewidth of $S_3$ is three.
	\end{LEM}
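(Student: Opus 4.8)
The plan is to show both bounds separately: $\sctw(S_3)\le 3$ by exhibiting an explicit strongly chordal supergraph of $S_3$ with clique number four, and $\sctw(S_3)\ge 3$ by arguing that no strongly chordal supergraph on the same six vertices can have clique number three. For the upper bound, recall that $S_3$ consists of an inner triangle on the three degree-four vertices, say $x_1,x_2,x_3$, together with three outer (simplicial) vertices $y_1,y_2,y_3$, where $y_i$ is adjacent to the two endpoints of the inner triangle other than $x_i$. A clean choice is to take $H$ to be the graph obtained from $S_3$ by adding the edges from one fixed outer vertex, say $y_1$, to $x_1$; then $H$ has $\omega(H)=4$ (the clique $\{x_1,x_2,x_3,y_1\}$) and one checks directly that $H$ is chordal and that its only even cycle of length $\ge 6$ (if any remains chordless) has an odd chord. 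Even more robustly, one can simply take $H=K_6$ minus nothing, i.e.\ observe that the complete graph $K_4$ on $\{x_1,x_2,x_3,y_1\}$ together with $y_2,y_3$ made simplicial on $\{x_1,x_2,x_3\}$ — but the cleanest is: let $H$ be $S_3$ plus all three edges $x_iy_i$, so that $H$ is the graph where $\{x_1,x_2,x_3\}$ is a triangle and each $y_i$ is adjacent to all of $x_1,x_2,x_3$; this $H$ is a split graph with a simplicial-vertex elimination ordering $y_1,y_2,y_3,x_1,x_2,x_3$ and has no chordless cycle of length $\ge 4$ at all, hence is trivially strongly chordal, with $\omega(H)=4$. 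Thus $\sctw(S_3)\le 3$.

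For the lower bound, I would argue by contradiction: suppose $H$ is a strongly chordal graph with $S_3\subseteq H$ and $\omega(H)\le 3$, i.e.\ $\sctw(S_3)\le 2$. Since $\tw(S_3)=2$ (it is series-parallel, equivalently has no $K_4$ minor — one can check $S_3$ has no $K_4$ minor directly), the treewidth bound alone does not rule this out, so the strongly chordal condition must do the work. The key point is the structure of strongly chordal graphs of clique number at most three: by Theorem~\ref{thm:relclasses} and the fact that $\pw,\spctw,\ldots$ sit between $\tw$ and $\pw$, a strongly chordal supergraph $H$ with $\omega(H)\le 3$ witnesses $\sctw(S_3)\le 2$; I then want to show $S_3$ must already contain a forbidden substructure. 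Concretely, in any chordal supergraph $H$ of $S_3$ with $\omega(H)\le 3$, adding edges to $S_3$ to make it chordal without creating a $K_4$ is very constrained: the outer vertices $y_i$ have degree two in $S_3$ with non-adjacent neighbours, so to triangulate, for each $i$ we must add the edge between the two neighbours of $y_i$ — but those neighbours are already the inner triangle, so in fact the only $4$-cycles to kill are $y_i x_j y_k x_\ell$-type cycles, and killing them forces an edge $x_j x_\ell$ (already present) or an edge $y_i x$; tracking this shows that some edge $x_i y_j$ must be added, and then the $6$-cycle $y_1 x_2 y_3 x_1 y_2 x_3$ (the ``rim'' of the sun) survives as a chordless cycle in a canonical sub-case, or else an $8$-structure forces $K_4$. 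The $6$-cycle $y_1 x_2 y_3 x_1 y_2 x_3$ in $S_3$ itself is chordless in $S_3$ and any odd chord of it would be an edge $y_i y_j$ or an $x$–$x$ edge that together with the triangle creates a $K_4$; this is the heart of why $S_3$ fails strong chordality at width two.

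The main obstacle I expect is making the lower-bound case analysis clean rather than a sprawling enumeration: one must show that \emph{every} strongly chordal $H\supseteq S_3$ has $\omega(H)\ge 4$, ranging over all ways of adding edges, and the temptation is to check many sub-cases. I would streamline this by first reducing to the cell-completion / tree-of-cycles picture: since $S_3$ is $2$-connected with $\tw=2$, any strongly chordal supergraph of clique number $\le 3$ would, via the earlier machinery (Theorem~\ref{thm:babette} and the forthcoming Theorem~\ref{thm:mainsc}-style analysis for the cycle model), force the cell completion $\widetilde{S_3}$ to be a tree of two-boundaried cycles; but $\widetilde{S_3}=S_3$ itself, and $S_3$ viewed as a tree of cycles has the inner edge $x_ix_j$ playing the role of an edge separator contained in three chordless cycles (the two ``petal'' triangles and the rim $6$-cycle through it), none of which is a simplicial triangle in the required sense — more precisely $S_3$ is not a tree of two-boundaried cycles because each inner edge lies in too many chordless cycles. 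This gives the contradiction with minimal casework. Hence $\sctw(S_3)=3$.
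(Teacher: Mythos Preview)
Your upper bound is fine (indeed, adding a single edge $x_iy_i$ already suffices, which is what the paper does). The lower bound, however, has two genuine problems.

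First, your proposed ``streamlined'' route via the tree-of-two-boundaried-cycles characterization is circular: Lemma~\ref{lem:sctws3} is invoked in the proof of Proposition~\ref{prop:nominorsimple}, which in turn is used to establish Theorem~\ref{thm:main2} and hence Theorem~\ref{thm:mainsc}. You cannot appeal to those results here. (Your description of $S_3$ as a tree of cycles also slips: each inner edge $x_ix_j$ lies in exactly \emph{two} chordless cycles, the inner triangle and one petal; the rim $6$-cycle is not chordless. The correct reason $S_3$ fails to be a tree of two-boundaried cycles is that the inner triangle carries three edge separators.)

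Second, your direct $6$-cycle argument has the parity backwards. The rim cycle $y_1x_2y_3x_1y_2x_3$ is \emph{not} chordless in $S_3$ --- the three edges $x_ix_j$ are chords --- but that is irrelevant, since the strong chordality definition asks that \emph{every} even cycle of length $\ge 6$ have an odd chord. Along this $6$-cycle the distance-$2$ chords are exactly the $x_ix_j$ and the $y_iy_j$; the distance-$3$ (odd) chords are exactly the three edges $x_iy_i$. Hence in any strongly chordal supergraph $H\supseteq S_3$, the induced subgraph $H[V(S_3)]$ (still strongly chordal, as the class is hereditary) must contain some $x_iy_i$, and then $\{x_1,x_2,x_3,y_i\}$ is a $K_4$ in $H$. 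This two-line argument is precisely the paper's proof; your sketch has the right cycle but misidentifies the odd chords and so never reaches the forced $K_4$.
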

	\begin{proof}
	If we add one odd chord in the cycle of length six in $S_3$,
	then the resulting graph is a strongly chordal graph with clique number four.
	Therefore, $\sctw(S_3)\le 3$. 
	If there exists a strongly chordal graph $H$
	having $S_3$ as a subgraph,
	then the cycle of length six in $H[V(S_3)]$ must have an odd chord.
	Thus, $\omega (H)\ge 4$ and it implies that $\sctw(S_3)\ge 3$.
		 \end{proof}

	The following lemma will be used to find $S_3$ as a minor.
    \begin{LEM}\label{lem:avoidingcycle}
    Let $G=(V,E)$ be a $2$-connected graph having treewidth two and let $uv$ be an edge separator of $\widetilde{G}$.
 	If $C$ is a chordless cycle of $\widetilde{G}$ containing $uv$, 
 	then $G$ has two internally vertex-disjoint paths from $u$ to $v$ such that they have no vertices of $C$ except $u$ and $v$.
	\end{LEM}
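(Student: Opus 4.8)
The plan is to exploit the recursive structure of trees of cycles. Since $G$ is $2$-connected and has treewidth two, Theorem~\ref{thm:babette} tells us $\widetilde{G}$ is a tree of cycles, and $uv$ is an edge separator, so $\widetilde{G}[V-\{u,v\}]$ has at least two components. First I would set up the decomposition along $uv$: write $H_1,\dots,H_m$ ($m\ge 2$) for the components of $\widetilde{G}[V-\{u,v\}]$ and $G_i=\widetilde{G}[V(H_i)\cup\{u,v\}]$, so that $\widetilde{G}$ is obtained by gluing the $G_i$ along the edge $uv$. The chordless cycle $C$ containing $uv$ lies entirely inside exactly one of these, say $G_1$ (apart from the edge $uv$ itself, which is shared by all of them). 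The key observation is that each $G_i$ is again $2$-connected of treewidth two, and $uv\in E(G_i)$ for every $i$; since $G_i$ is $2$-connected with the edge $uv$ present, $G_i-uv$ is connected and contains a $u$--$v$ path internally disjoint from $\{u,v\}$, hence a $u$--$v$ path avoiding all of $V(C)\setminus\{u,v\}$ provided $i\ge 2$.

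So if $m\ge 3$, I immediately get two such internally vertex-disjoint paths, one through $H_2$ and one through $H_3$, and these share no internal vertices since the $H_i$ are distinct components of $\widetilde{G}[V-\{u,v\}]$ (and a path through $H_i$ has its internal vertices in $V(H_i)$). The remaining case is $m=2$: then $C\subseteq G_1$ and I must produce two internally vertex-disjoint $u$--$v$ paths inside $G_2$ that avoid $V(C)\setminus\{u,v\}$ — but $C\cap G_2 = \{u,v\}$ together with the edge $uv$, so this just amounts to finding two internally vertex-disjoint $u$--$v$ paths in $G_2$, i.e.\ applying Menger's theorem to the $2$-connected graph $G_2$. (Note $|V(G_2)|\ge 3$ since $H_2$ is a nonempty component.) One subtlety: the two $u$--$v$ paths from Menger could each individually use the edge $uv$; I would handle this by taking the two paths in $G_2-uv$, which is still $2$-connected unless $G_2$ is itself just the triangle-free... — more carefully, $G_2$ is a $2$-connected subgraph of the tree of cycles $\widetilde{G}$ on at least three vertices, so it has a cycle through $u$ and $v$, giving two internally vertex-disjoint $u$--$v$ paths neither of which is the single edge $uv$.

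The main obstacle I anticipate is the bookkeeping in the case $m=2$ when $G_2$ might be small (for instance when the chordless cycle of $G_2$ through $uv$ is a triangle): I must make sure the ``two internally vertex-disjoint paths'' are genuinely two distinct paths and not degenerate, and that they avoid $C$. The cleanest route is probably: let $C'$ be the chordless cycle of $\widetilde{G}$ lying in $G_2$ that contains $uv$ (it exists because $G_2$ is $2$-connected of treewidth two, so $\widetilde{G_2}=G_2$ is a tree of cycles and some chordless cycle contains the edge $uv$); then the two arcs of $C'$ between $u$ and $v$ are internally vertex-disjoint $u$--$v$ paths, they are contained in $V(G_2)$, and $V(G_2)\cap V(C)=\{u,v\}$, so they meet $C$ only in $u$ and $v$. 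This handles $m=2$, and the $m\ge3$ argument above handles the rest, completing the proof. I would also remark that when $G_1$ above does not exist because $C$ uses only the edge $uv$ trivially — i.e.\ $C$ is itself a chordless cycle through $uv$ possibly spanning more than one $G_i$ — this cannot happen: a chordless cycle of a tree of cycles lies within a single $G_i$ by the recursive construction of Definition~\ref{def:babette3}.
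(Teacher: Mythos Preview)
Your argument has a genuine gap: you work entirely inside $\widetilde{G}$, but the lemma asks for two $u$--$v$ paths in $G$. The graphs $G_i=\widetilde{G}[V(H_i)\cup\{u,v\}]$ you define are subgraphs of $\widetilde{G}$, not of $G$, so the paths you extract from them need not be paths of $G$. This bites hardest in your $m=2$ case. There, the chordless cycle $C'$ of $\widetilde{G}$ through $uv$ in $G_2$ has as its two arcs the single edge $uv$ and the path $C'-uv$. The edge $uv$ is only known to lie in $E(\widetilde{G})$; if $uv\in E(\widetilde{G})\setminus E(G)$ this arc is simply not a path of $G$. And the other arc $C'-uv$ consists of edges of $\widetilde{G}$ that may themselves have been added by the cell completion (e.g.\ when $C'$ is a triangle $uvw$ and $uv\notin E(G)$, your two ``paths'' are $uv$ and $u\text{--}w\text{--}v$, but the first is not in $G$ and the second is the \emph{only} $u$--$v$ path through $H_2$ in $G$, so you do not get two). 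Your $m\ge 3$ case is salvageable---one can argue via the $2$-connectedness of $G$ that each $H_i$ supports a $u$--$v$ path in $G$---but as written it also produces $\widetilde{G}$-paths, not $G$-paths.

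The paper's proof is shorter and avoids this by making the distinction $uv\in E(G)$ versus $uv\in E(\widetilde{G})\setminus E(G)$ the primary case split. If $uv\in E(G)$, then $uv$ itself is one of the two required paths, and the other comes from a component of $G[V-\{u,v\}]$ disjoint from $C$ (such a component exists because $uv$ is an edge separator of $\widetilde{G}$, so $\widetilde{G}[V-\{u,v\}]$ already has a component disjoint from $C$, and $G$-components refine $\widetilde{G}$-components). If $uv\notin E(G)$, then the very definition of the cell completion forces $G[V-\{u,v\}]$ to have at least three components, and at least two of them miss $C$, giving the two paths directly. The decomposition along the tree-of-cycles structure that you set up is not needed; the whole lemma follows from the definition of $\widetilde{G}$ and the $2$-connectedness of $G$.
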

	\begin{proof}
	We have two cases.
	
	\smallskip
	\noindent\emph{Case 1. $uv\in E$.} 
	The edge $uv$ is one of the required paths.
	Since $uv$ is an edge separator of $\widetilde{G}$, 
	$\widetilde{G}[V-\{u,v\}]$ has at least one component having no vertices of $C$. 
	Thus, $G[V-\{u,v\}]$ has a component $H$ having no vertices of $C$, and $G$ has a path from $u$ to $v$ in $G$ along $H$.
	
	\smallskip
	\noindent\emph{Case 2. $uv\in E(\widetilde{G})- E$.} 
	By the definition of a cell completion, $G[V-\{u,v\}]$ has at least three components.
	Therefore, $G[V-\{u,v\}]$ has two components $H_1$ and $H_2$ which contain no vertices in $C$.
	Clearly, there are two internally vertex-disjoint paths from $u$ to $v$ in $G$ along $H_1$ and $H_2$.
	\end{proof}

\subsection{Contractions on Graphs of Strongly Chordal Treewidth Two}
	We show that the class of graphs of strongly chordal treewidth at most two is closed under taking minors. 
\begin{PROP}\label{prop:contraction}
	The class of graphs of strongly chordal treewidth at most two is closed under taking minors.
	\end{PROP}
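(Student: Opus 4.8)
The plan is to check closure under each of vertex deletion, edge deletion, and edge contraction. The first two are immediate: if $G$ is a subgraph of a strongly chordal graph $H$ with $\omega(H)\le 3$, then $G-e$ is a subgraph of the same $H$, while $G-v$ is a subgraph of $H-v$, an induced subgraph of $H$, hence still strongly chordal with clique number at most three. So all of the work is in showing that contracting an edge cannot raise the strongly chordal treewidth above two. The point to keep in mind is that strongly chordal graphs are \emph{not} closed under edge contraction in general; the bound $\omega(H)\le 3$ is exactly what makes the argument go through (and, as Section~\ref{section:width3} shows, is also why this closure fails for $\sctw\le k$ with $k\ge 3$).

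So fix $G$ with $\sctw(G)\le 2$, a strongly chordal supergraph $H\supseteq G$ with $\omega(H)\le 3$, and an edge $e=xy\in E(G)\subseteq E(H)$; let $w$ be the vertex obtained by identifying $x$ and $y$. Every edge of $G/e$ either avoids $w$ and is then an edge of $G$, or has the form $wz$ with $xz\in E(G)$ or $yz\in E(G)$; in both cases it is an edge of $H/e$, so $G/e\subseteq H/e$, and it suffices to prove that $H/e$ is strongly chordal with $\omega(H/e)\le 3$. First I would argue that $H/e$ is chordal: writing $H$ as the intersection graph of a family of subtrees of a tree~\cite{Gavril74} and replacing the subtrees assigned to $x$ and to $y$ by their union (a subtree, since the two subtrees intersect) exhibits $H/e$ as an intersection graph of subtrees of a tree. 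Next, $\omega(H/e)\le 3$: a clique of $H/e$ missing $w$ is a clique of $H$, and a clique $\{w\}\cup S$ with $S$ a clique of $H$ cannot contain $s_1,s_2\in S$ with $s_1$ adjacent in $H$ to $x$ but not $y$ and $s_2$ adjacent to $y$ but not $x$, for then $\{x,s_1,s_2,y\}$ would induce a chordless $4$-cycle of $H$; hence every vertex of $S$ is adjacent to $x$, or every vertex of $S$ is adjacent to $y$, so $\{x\}\cup S$ or $\{y\}\cup S$ is a clique of $H$, giving $\abs{S}+1\le\omega(H)\le 3$.

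The crucial step is to show that $H/e$ has no induced $3$-sun $S_3$; then, since $H/e$ is chordal with $\omega(H/e)\le 3$ and any $k$-sun contains a clique $K_k$, the characterization of strongly chordal graphs as the chordal graphs without an induced $k$-sun for every $k\ge 3$ (see~\cite{Farber82,BLS2001}) shows that $H/e$ is strongly chordal, completing the proof. Suppose toward a contradiction that $H/e$ has an induced $S_3$ with central triangle $c_1c_2c_3$ and pairwise non-adjacent outer vertices $a_1,a_2,a_3$, where $a_i$ is adjacent exactly to $c_{i-1}$ and $c_i$. If this copy avoids $w$, it is already an induced $S_3$ of $H$, a contradiction; otherwise $w$ is one of its six vertices, and I would split into the cases $w=a_i$ and $w=c_i$, and within each record, for each of the (one or two) edges of $H/e$ joining $w$ to a neighbour inside this $S_3$, whether in $H$ it is witnessed by an edge at $x$, at $y$, or at both. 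The hard part is this bookkeeping: one must check that in every resulting configuration, either replacing $w$ by $x$ or by $y$ among the six vertices already produces an induced $S_3$ of $H$, or two of these incidences force a chordless $4$-cycle $\{x,p,q,y\}$ in $H$ with $pq, xp, yq\in E(H)$ --- in both cases contradicting that $H$ is strongly chordal (hence chordal). Conceptually this is the statement that contracting $xy$ merely glues $H$ along the triangles through $xy$, and with only $\omega(H)\le 3$ room available this cannot create a new $3$-sun without reproducing structure already forbidden in $H$.
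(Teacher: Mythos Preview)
Your strategy is sound and in fact tidier than the paper's route. Both reduce to the same key statement --- if $H$ is strongly chordal with $\omega(H)\le 3$ and $e\in E(H)$, then $H/e$ has no induced $S_3$ --- which is precisely Lemma~\ref{lem:basiccase}. From there the paper runs an induction on odd-cycle length (Proposition~\ref{prop:contractionsc}) to show directly that every even cycle of $H/e$ has an odd chord. You instead observe that once $H/e$ is chordal with $\omega(H/e)\le 3$, Farber's characterization (Theorem~\ref{thm:charsc}) makes ``no induced $S_3$'' sufficient, since any $k$-sun with $k\ge 4$ contains $K_4$. That shortcut is correct and eliminates the induction entirely; your arguments for chordality of $H/e$ and for $\omega(H/e)\le 3$ are also fine.

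The gap is in the ``bookkeeping'' you defer. Your claimed dichotomy --- that in every configuration either substituting $x$ or $y$ for $w$ already yields an induced $S_3$, or there is a chordless $4$-cycle of the specific shape $x\text{--}p\text{--}q\text{--}y$ with $p\in N_H(x)\setminus N_H(y)$, $q\in N_H(y)\setminus N_H(x)$ and $pq\in E(H)$ --- is false. Take $w=c_1$ and, restricting to the four $S_3$-neighbours $\{a_1,a_2,c_2,c_3\}$, set $N_x=\{a_1,c_3\}$ and $N_y=\{c_3,c_2,a_2\}$. Then $N_x\setminus N_y=\{a_1\}$ and $N_y\setminus N_x=\{c_2,a_2\}$ have no edge between them (recall the induced path $a_1\text{--}c_3\text{--}c_2\text{--}a_2$), so no $4$-cycle of your shape exists; and neither $x$ nor $y$ sees all four neighbours, so neither substitution reproduces the original $S_3$. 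The resulting $7$-vertex graph is chordal with $\omega=3$ (each $a_i$ is simplicial and the rest is $K_4$ minus an edge), yet it is \emph{not} strongly chordal: the six vertices $\{x,y,a_2,a_3,c_2,c_3\}$ induce an $S_3$ with inner triangle $y,c_2,c_3$ and outer vertices $x,a_2,a_3$ --- a copy using both $x$ and $y$, not a simple lift of either. The paper's Lemma~\ref{lem:basiccase} handles exactly this by a more careful case split that, in the central-vertex case, sometimes produces a $6$-cycle without odd chord (equivalently a different induced $S_3$) rather than either of your two outcomes. So your plan goes through, but the key lemma needs the full analysis rather than the stated dichotomy.
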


    We will use a known characterization of strongly chordal graphs.
	For an integer $n\ge 3$, a graph $G$ is called an \emph{$n$-sun} 
	if $G$ is a graph with $2n$ vertices
	which are partitioned into two parts $U = \{u_1, u_2, \ldots, u_n\}$ and $W = \{w_1, w_2, \ldots, w_n\}$ 
	such that $U$ induces a clique, $W$ induces an independent set and 
	each vertex $w_i$ in $W$ is adjacent to $u_j$
	if and only if $i\equiv j$ or $i\equiv j+1$ (mod $n$). 
	Note that $S_3$ is the $3$-sun.

	\begin{THM}[Farber~\cite{Farber83}]\label{thm:charsc}
	A graph $G$ is strongly chordal if and only if 
	$G$ is chordal and it has no induced subgraph isomorphic to a sun.
	\end{THM}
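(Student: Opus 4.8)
The plan is to prove both implications of Farber's characterization, treating the forward direction (strongly chordal $\Rightarrow$ chordal and sun-free) as the routine part and the converse (chordal and sun-free $\Rightarrow$ strongly chordal) through its contrapositive as the substantive one.

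For the forward direction, chordality is immediate from the definition of strongly chordal, so only sun-freeness needs work. I would show that no sun occurs as an induced subgraph of a strongly chordal graph by exhibiting, inside an arbitrary $n$-sun, an even cycle of length at least six without an odd chord. With $U=\{u_1,\dots,u_n\}$ the clique and $W=\{w_1,\dots,w_n\}$ the independent set, the sequence $w_1u_1w_2u_2\cdots w_nu_nw_1$ is a cycle $C$ of length $2n\ge 6$ built from the edges $w_iu_i$ and $w_{i+1}u_i$. Because $W$ is independent and each $w_i$ is adjacent only to the cycle-consecutive vertices $u_{i-1}$ and $u_i$, the only chords of $C$ are the clique edges $u_iu_j$. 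On $C$ the vertices $u_i$ and $u_j$ lie at distance $2\abs{i-j}$, so each chord $u_iu_j$ splits $C$ into two even subpaths and is therefore not an odd chord. Thus $C$ witnesses that a graph with an induced sun is not strongly chordal.

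For the converse I would assume $G$ is chordal but not strongly chordal and produce an induced sun. By definition there is an even cycle of length at least six with no odd chord; choose $C=v_1v_2\cdots v_{2k}$ with $2k\ge 6$ of \emph{minimum} length among all such cycles. The first step is a parity constraint on chords: a chord joining $v_i$ and $v_j$ at odd cycle-distance $d\ge 3$ splits $C$ into two odd subpaths, both of length at least three (since $d\ge 3$ and $2k-d\ge k\ge 3$), hence is an odd chord, which is forbidden; and a chord at distance $1$ is a cycle edge. So every chord joins two vertices of the same parity class, and there is no chord other than the $2k$ cycle edges between $A=\{v_1,v_3,\dots,v_{2k-1}\}$ and $B=\{v_2,v_4,\dots,v_{2k}\}$. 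Since $G[V(C)]$ is chordal and contains the cycle $C$ of length at least four, it has at least one chord, which therefore lies inside $A$ or inside $B$.

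The heart of the argument, and the step I expect to be the main obstacle, is to upgrade these constraints to the full structure of a sun on $V(C)$: one parity class must induce a clique and the other an independent set, with only the cycle edges crossing between them, making $G[V(C)]$ an induced $k$-sun and contradicting sun-freeness. I would establish this by extremal analysis against the minimality of $C$: assuming, say, that $A$ contains a chord or that $B$ is not complete, I would take a shortest offending chord and use the chordality of $G[V(C)]$ to triangulate the two subcycles it creates, and then splice pieces together into a strictly shorter even cycle of length at least six that still avoids odd chords, contradicting the choice of $C$. The delicate point is that a single chord of an \emph{even} cycle always cuts it into two \emph{odd} cycles, so any shorter even cycle must be assembled from fragments of both sides; the care lies in doing this bookkeeping so that no odd chord is reintroduced and the reassembled cycle stays even and of length at least six.
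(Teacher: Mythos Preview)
The paper does not prove this theorem: it is quoted from Farber~\cite{Farber83} as a known characterization and used as a black box (to support Lemma~\ref{lem:dpmax3} and Proposition~\ref{prop:contractionsc}), so there is no proof in the paper to compare your attempt against.

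For completeness: your forward direction is correct---the $2n$-cycle $w_1u_1w_2u_2\cdots w_nu_nw_1$ in an $n$-sun has only the clique edges $u_iu_j$ as chords, and these split the cycle into two even paths, hence are never odd chords. Your plan for the converse follows a standard line (take a shortest even cycle of length at least six without an odd chord and show the induced subgraph on its vertices is a sun), and the parity argument restricting chords to a single parity class is right. However, you explicitly leave the decisive step---that one parity class induces a clique and the other an independent set---as a sketch, and the splicing manoeuvre you describe (reassembling a shorter bad even cycle from pieces on both sides of a chord) is exactly where the work lies; as stated it is a plan rather than a proof. Farber's own argument proceeds instead via the equivalence with the existence of a \emph{simple elimination ordering}, which sidesteps the cycle-splicing entirely.
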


	Since taking subgraphs does not increase strongly chordal treewidth, it is enough to show the following.
	
	 \begin{PROP}\label{prop:contractionsc}
	Let $G=(V,E)$ be a strongly chordal graph of $\omega(G)\le 3$ and let $e\in E$.
	Then $G/e$ is strongly chordal.
	\end{PROP}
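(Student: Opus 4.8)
The plan is to verify, via the characterization of Theorem~\ref{thm:charsc}, the two defining properties of a strongly chordal graph for $G/e$ separately: that $G/e$ is chordal, and that $G/e$ contains no induced sun. Throughout, let $z$ be the vertex of $G/e$ obtained from contracting $e=xy$, so that $N_{G/e}(z)=(N_G(x)\cup N_G(y))\setminus\{x,y\}$, and recall that for every set $S$ of vertices of $G/e$ with $z\notin S$ the induced subgraphs $(G/e)[S]$ and $G[S]$ coincide. Note $G$ is chordal since it is strongly chordal.

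\emph{Chordality.} I would show no chordless cycle $C$ of length at least four survives in $G/e$. If $z\notin V(C)$, then $C$ is already such a cycle in $G$, contradicting chordality of $G$. If $z\in V(C)$, write $C=z\,a_1a_2\cdots a_k\,z$ with $k\geq 3$; then $a_1,a_k\in N_{G/e}(z)$, the vertices $a_2,\dots,a_{k-1}$ are adjacent in $G$ to neither $x$ nor $y$, and $a_1\cdots a_k$ is a chordless path in $G$. If some single endpoint $r\in\{x,y\}$ is adjacent in $G$ to both $a_1$ and $a_k$, then $r\,a_1\cdots a_k\,r$ is a chordless cycle of length $k+1\geq 4$ in $G$; otherwise, since each of $a_1,a_k$ sees $x$ or $y$, we may assume $a_1$ sees only $x$ and $a_k$ sees only $y$, so that $x\,a_1\cdots a_k\,y\,x$ (using the edge $e$) is a chordless cycle of length $k+2\geq 5$ in $G$. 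Either way we contradict chordality of $G$. This step does not use $\omega(G)\leq 3$.

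\emph{No induced sun.} Suppose $G/e$ has an induced $n$-sun $T$, $n\geq 3$, with clique part $U$ and stable part $W$. If $z\notin V(T)$ then $T$ is an induced sun of $G$, a contradiction. If $z\in W$, its two neighbours $u',u''\in U$ are adjacent in $G$ and each sees $x$ or $y$; if both see the same endpoint $r$ of $e$, then replacing $z$ by $r$ yields an induced $n$-sun in $G$ (every non-neighbour of $z$ in $T$ is a non-neighbour of $r$), while if $u'$ sees only $x$ and $u''$ only $y$ then $\{x,u',u'',y\}$ induces a $C_4$ in $G$; both are impossible, so $z\in U$. I then classify each neighbour of $z$ in $G/e$ as type $X$, $Y$, or $XY$ according to whether it is adjacent in $G$ to $x$ only, $y$ only, or both, and observe that chordality of $G$ forbids any edge between a type-$X$ and a type-$Y$ vertex (it would close a $C_4$ through $x,y$). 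Since $U\setminus\{z\}$ is a clique of $G$, all of its vertices lie on one side; say all see $x$, whence $\{x\}\cup(U\setminus\{z\})$ is a clique of size $n$ in $G$, and $\omega(G)\leq 3$ forces $n=3$. Writing $U\setminus\{z\}=\{u_2,u_3\}$, letting $w',w''$ be the two stable neighbours of $z$ and $w'''$ the third stable vertex, it remains to analyse the types of $w'$ and $w''$: if both see $x$, then replacing $z$ by $x$ gives an induced $S_3$ in $G$; if one of them, say $w'$, has type $Y$, then the type rule together with $\omega(G)\leq 3$ (via maximality of the triangle $\{x,u_2,u_3\}$) pin down the remaining adjacencies among $x,y,u_2,u_3,w',w'',w'''$, and in each surviving configuration one exhibits either an induced $C_4$ in $G$ or an induced $S_3$ in $G$ with clique $\{x,u_2,u_3\}$. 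This contradiction shows $G/e$ has no induced sun, so $G/e$ is strongly chordal.

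The step I expect to be the real work is the last case analysis for $z\in U$: after un-contracting one must keep track of exactly which endpoint of $e$ each ``sun-neighbour'' of $z$ attaches to, and verify that every consistent distribution produces a forbidden substructure of $G$. The two facts that make this manageable are the ``no $X$--$Y$ edge'' consequence of chordality, which confines $U\setminus\{z\}$ to a single side, and the hypothesis $\omega(G)\leq 3$, used both to bound the sun size and to force key non-adjacencies such as $y$ failing to be adjacent to all of $U\setminus\{z\}$.
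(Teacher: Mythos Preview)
Your proposal is correct; the final case analysis you flag as ``the real work'' does go through. When $w'$ has type $Y$, the adjacency $w'u_2$ together with your $X$--$Y$ rule forces $u_2$ to be type $XY$; then either $u_3$ is also type $XY$ and $\{x,y,u_2,u_3\}$ is a $K_4$, or $u_3$ is type $X$. In the latter case, if $w''$ is type $X$ then $\{x,u_2,u_3\}$ with $\{y,w'',w'''\}$ is an induced $S_3$ in $G$, while if $w''$ is type $XY$ then $y\,w''\,u_3\,u_2\,y$ is an induced $C_4$ in $G$ (using $yu_3\notin E$ since $u_3$ is type $X$, and $w''u_2\notin E$ from the sun). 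The remaining sub-case, where $w''$ also has type $Y$, forces $u_3$ to be type $XY$ as well and again yields a $K_4$.

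Your route is genuinely different from the paper's. The paper does not invoke Farber's sun-free characterization (Theorem~\ref{thm:charsc}) here; it works directly with the odd-chord definition. Noting that $G/e$ is chordal and that the only even cycles of $G/e$ that might lack an odd chord are contractions $C/e$ of odd cycles $C\ni e$ in $G$, it proves by induction on $|C|\geq 7$ that $C/e$ has an odd chord in $G/e$. The base case $|C|=7$ is isolated as Lemma~\ref{lem:basiccase}, whose content is essentially your $z\in U$, $n=3$ analysis specialised to $|V(G)|=7$; the inductive step peels off a short chord, passes to a shorter even cycle with an odd chord, and shows that chord survives as an odd chord of $C/e$ (with a small fallback case that again reduces to an $S_3$). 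Your approach trades this induction for a direct appeal to Theorem~\ref{thm:charsc}: the clique bound instantly caps the sun at $n=3$, and all the work collapses into the single neighbour-type case split. The paper's approach avoids the type bookkeeping around the contracted vertex but pays with a two-layer argument (lemma plus induction); the core case analyses are of comparable length.
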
		
	
	We will prove, by induction on the size of odd cycles $C$ in $G$ which contain the contracted edge $e$, that $C/e$ has an odd chord. For the base case, we need a lemma.
	
	\begin{LEM}\label{lem:basiccase}
	Let $G=(V,E)$ be a chordal graph and let $e\in E$. 
	If $G/e=S_3$, then either $G$ is not strongly chordal or $\omega(G)=4$.
	\end{LEM}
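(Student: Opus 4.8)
The plan is to take a chordal graph $G$ with $G/e = S_3$, write $e = xy$, and show that unless $\omega(G)=4$ we can locate a forbidden induced configuration in $G$, contradicting strong chordality via Theorem~\ref{thm:charsc}. Let $z$ be the vertex of $G/e$ obtained by identifying $x$ and $y$. Since $S_3$ is the $3$-sun with clique $U=\{u_1,u_2,u_3\}$ and independent set $W=\{w_1,w_2,w_3\}$, where $w_i \sim u_i, u_{i+1}$, the vertex $z$ plays the role of one of these six vertices of $S_3$. So I would split into two cases: $z$ is one of the clique vertices (say $z = u_1$), or $z$ is one of the independent-set vertices (say $z = w_1$). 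In each case, the edges of $S_3$ at $z$ must be covered by edges at $x$ or at $y$ in $G$; the deleted edges of $G$ are only possibly among the pair $\{x,y\}$ and within $N(x)\cap N(y)$.

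**The clique-vertex case.** Suppose $z = u_1$. Then $z$ is adjacent in $S_3$ to $u_2, u_3, w_1, w_3$, and $\{u_1,u_2,u_3\}$ is a triangle. In $G$, each of the four neighbours $u_2,u_3,w_1,w_3$ is adjacent to $x$ or to $y$ (or both). First I would argue that if $\omega(G)\le 3$ then $x$ and $y$ cannot both be adjacent to both $u_2$ and $u_3$: that would force a $K_4$ on $\{x,y,u_2,u_3\}$ once we note $u_2 u_3\in E(G)$ (since the $u_iu_j$ edges of $S_3$ are not among the identified pair). Push this further: the key is that the triangle $u_1u_2u_3$ of $S_3$ lifts to a structure on $\{x,y,u_2,u_3\}$ in $G$ that is a path or a $4$-cycle (not a clique, by the $\omega\le3$ assumption), and in a chordal graph a chordless $4$-cycle is impossible, so it must be a path, say $u_2 - x - u_3$ with $y$ hanging off one end, or similar. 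Then tracing how $w_1$ (adjacent to $u_1,u_2$) and $w_3$ (adjacent to $u_1,u_3$) attach to $\{x,y\}$, I would exhibit an induced $S_3$ (or a larger sun, or a long induced cycle) already present in $G$ — because $w_1,w_3$ are nonadjacent and nonadjacent to $u_2\,\text{or}\,u_3$ on the far side — contradicting Theorem~\ref{thm:charsc}.

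**The independent-set-vertex case.** Suppose $z = w_1$, adjacent in $S_3$ only to $u_1$ and $u_2$, with $u_1 u_2\in E(S_3)$. Now $\{u_1,u_2,u_3,w_2,w_3\}$ survives unchanged in $G$ as an induced subgraph: $S_3$ minus $w_1$ is an induced subgraph of $G$ on these five vertices. This five-vertex graph is a triangle $u_1u_2u_3$ with pendant-ish vertices $w_2$ (adjacent to $u_2,u_3$) and $w_3$ (adjacent to $u_1,u_3$), and $w_2w_3\notin E$. In $G$, $x$ is adjacent to at least one of $u_1,u_2$ and $y$ to at least one of $u_1,u_2$ (covering the edges $w_1u_1, w_1u_2$ of $S_3$), with $xy\in E(G)$. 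I would show that adding the edge $x$ or $y$ back and tracking its neighbours among $\{u_1,u_2\}$ either recreates an induced $3$-sun on $\{u_1,u_2,u_3,x\text{ or }y,w_2,w_3\}$ or similar, or forces $x$ (or $y$) into a $4$-clique with $u_1,u_2,u_3$ once chordality rules out the chordless $4$-cycle on $\{x,u_1,?,u_2\}$-type configurations.

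**Main obstacle and bookkeeping.** The hard part will be the careful case analysis on exactly how the four (or two) neighbours of $z$ in $S_3$ distribute between $x$ and $y$, and on which of the "deletable" edges (those within $N(x)\cap N(y)$, plus $xy$ itself) are actually absent in $G$ — in several subcases chordality of $G$ already kills a chordless $4$-cycle, collapsing the analysis, but I must check each branch and confirm it yields either $\omega(G)=4$ or an induced sun/hole in $G$. I would organize this by first observing that $G[V(S_3)\cup\{x,y\}]$ has at most eight vertices, deletes only edges inside $\{x,y\}\cup(N(x)\cap N(y))$, and then systematically ruling out the $\omega\le3$ possibilities; a small figure enumerating the lifts of the triangle $u_1u_2u_3$ would make the write-up clean. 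This lemma then feeds the induction in Proposition~\ref{prop:contractionsc}: the base case (smallest odd cycle through $e$, namely a triangle becoming an edge, handled trivially, and the next case where $C/e$ could be an induced $S_3$ is exactly what this lemma forbids under $\omega\le 3$).
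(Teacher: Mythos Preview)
Your overall strategy---split on whether the contracted vertex $z$ lies in the clique $U$ or the independent set $W$ of $S_3$, then use chordality to force edges until either an induced $S_3$ appears in $G$ or a $K_4$ does---is exactly the paper's approach. Your independent-set case ($z=w_1$) is essentially the paper's $i=1$ case: once you observe that chordality forces at least three of the four possible edges between $\{x,y\}$ and $\{u_1,u_2\}$, one of $x,y$ already has both neighbours and $G$ minus the other is an induced $S_3$.

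The clique-vertex case, however, contains a concrete error. You write that the triangle $u_1u_2u_3$ ``lifts to a structure on $\{x,y,u_2,u_3\}$ in $G$ that is a path or a $4$-cycle''. This is false: $xy\in E(G)$ (it is the edge $e$) and $u_2u_3\in E(G)$ (it is an $S_3$-edge not incident to $z$), so the induced subgraph on $\{x,y,u_2,u_3\}$ always contains these two disjoint edges plus at least one edge from $\{x,y\}$ to each of $u_2,u_3$. The possibilities are $K_4$, $K_4$ minus one edge, a triangle with a pendant, or (in exactly one cross configuration) a $4$-cycle---never a path. Your subsequent sentence ``so it must be a path, say $u_2 - x - u_3$ with $y$ hanging off one end'' is therefore unfounded, and the case analysis built on it collapses. (Also, $G$ has exactly seven vertices, not ``at most eight''.)

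The paper handles this case from a different angle and you should adopt it: instead of staring at the clique triangle, look at how the two $W$-neighbours of $z$ (the degree-$2$ vertices $w_1,w_3$ adjacent to $u_1$) are distributed between $x$ and $y$. If one of $x,y$ gets both, deleting the other already leaves an induced $S_3$. Otherwise $x$ gets one and $y$ the other; then the outer $6$-cycle of $S_3$ lifts to a $6$-cycle in $G$ with no odd chord, and a short chain of chordality arguments (each forcing a diagonal of a $4$-cycle) either produces a $K_4$ on $\{x,y,u_2,u_3\}$ or isolates a single vertex whose deletion leaves an induced $S_3$.
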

	\begin{proof}
	Let $V=\{w,v_1, v_2, \ldots, v_6\}$ and
	let us assume that for some $i\in \{1, 2, \ldots, 6\}$, 
	$e=wv_i$, and 
	after contracting $wv_i$ in $G$, the contracted vertex is again labeled by $v_i$.
	Suppose $G/e=S_3$ where $v_1v_2 \cdots v_6v_1$ is the cycle of length six and 
	$v_2v_4v_6$ is the triangle in the middle.
	By symmetry, we may assume $i=1$ or $2$.
	We may also assume that both $w$ and $v_i$ have degree at least two in $G$, otherwise one of $G-w$ and $G- v_i$ is isomorphic to $S_3$.
	
	If $i=1$,
	then the number of edges between $\{w,v_1\}$ and $\{v_2,v_6\}$ is at least three because $G$ is chordal.
	So, one of $G- w$ and $G- v_1$ must be isomorphic to $S_3$.
	Therefore, $G$ is not strongly chordal.
	
	Now we assume that $i=2$.
	We first claim that each of $w$ and $v_2$ is adjacent to exactly one of $v_1$ and $v_3$, and the neighbors of $w$ and $v_2$ are distinct.
	If $v_2$ is adjacent to both $v_1$ and $v_3$,
	then $v_1v_2v_3v_4v_5v_6v_1$ is an even cycle of length six  in $G$ without chords $v_1v_4, v_2v_5, v_3v_6$. 
	So, $G$ is not strongly chordal.
	By the same reason, $w$ cannot be adjacent to both $v_1$ and $v_3$.
	If $v_2$ is adjacent to neither $v_1$ nor $v_3$,
	then $w$ must be adjacent to both $v_1$ and $v_3$,
	and we already see that it is impossible.
	Thus, each of $w$ and $v_2$ is adjacent to exactly one of $v_1$ and $v_3$ and the neighbors are distinct.
	By symmetry, 
	we may assume that $w$ and $v_2$ are adjacent to $v_1$ and $v_3$, respectively.

		Since $G$ is chordal and $v_1$ is only adjacent to $w$ and $v_6$ which form a cycle with other vertices in $G$, $wv_6$ must be an edge of $G$. By the same reason, $v_2v_4\in E$.
		Since $wv_2v_4v_6$ is a cycle of length $4$, we have either $v_2v_6\in E$ or $wv_4\in E$.
		If $v_2v_6\notin E$ (or $wv_4\notin E$),
	then $G- v_3$ (or $G- v_1$) is isomorphic to $S_3$,
	and therefore $G$ is not strongly chordal.
		If $v_2v_6, wv_4\in E$, then $w(G)=4$, as required.
\end{proof}

	\begin{proof}[Proof of Proposition~\ref{prop:contractionsc}]
	Note that the cycles affected by
	the contraction of $e$ are the cycles containing $e$. 
	As $G/e$ is again chordal, we shall only consider the odd cycles in $G$ of length at least seven which contain $e$. 
	Let $C$ be one of such cycles. We shall show, by induction on the length of $C$, 
	that 
	\begin{itemize}[label={--}]
	\item for every edge $e\in E$, $C/e$ has an odd chord in $G/e$. 
	\end{itemize}
	
	Suppose the length of $C$ is seven.
	Since $G/e$ is chordal, if $C/e$ has no odd chord, then $V(C/e)$ induces a graph isomorphic to $S_3$ in $G/e$. 
	By Lemma~\ref{lem:basiccase}, 
	either $G[V(C)]$ is not strongly chordal or $\omega(G[V(C)])=4$,
	contradicting to our assumption on $G$.
	Thus, $C/e$ must have an odd chord.

	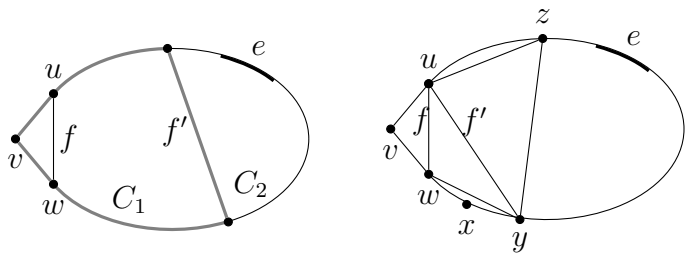
\begin{figure}[t]\centering
 \tikzstyle{v}=[circle, draw, solid, fill=black, inner sep=0pt, minimum width=3pt]
 \begin{tikzpicture}[scale=0.1]
 \draw (15,18) arc (210:360:18 and 12);
 \draw (15,30) arc (150:0:18 and 12);  
 \draw[very thick] (37,35) arc (60:30:19 and 9);  

  \draw[very thick, gray] (15,18) arc (210:294:18 and 12);
 \draw[very thick, gray] (15,30) arc (150:90:18 and 12);  
   
 \node[v](v1) at (15,30) {}; 
 \node[v](v2) at (10,24) {}; 
 \node[v](v3) at (15,18) {};
 
 \node[v](v4) at (30,36) {};
 \node[v](v5) at (38,13) {};
 
 \draw[very thick, gray] (v1) -- (v2)--(v3);
 \draw (v3)--(v1);
\draw[very thick, gray] (v4) -- (v5);
 
\node at (31,25) {$f'$}; 
\node at (25,16) {$C_1$}; 
\node at (41,18) {$C_2$}; 
  \node at (15,33) {$u$}; 
  \node at (15,15) {$w$}; 
\node at (42,36) {$e$}; 
\node at (17,24) {$f$}; 
\node at (10,21) {$v$}; 

  \node at (27,9) {};  
\end{tikzpicture}\qquad
 \begin{tikzpicture}[scale=0.1]
 \draw (15,18) arc (210:360:18 and 12);
 \draw (15,30) arc (150:0:18 and 12);  
 \draw[very thick] (37,35) arc (60:30:19 and 9);  
 
 \node[v](v1) at (15,30) {}; 
  \node[v](v7) at (30,36) {}; 
 \node[v](v2) at (10,24) {}; 
 \node[v](v3) at (15,18) {};
 
 \node[v](v4) at (15,30) {};
 \node[v](v5) at (27,12) {};
 \node[v](v6) at (20,14) {};
 \draw (v1) -- (v7) -- (v5);
 \draw (v1) -- (v2)--(v3)--(v1);
 \draw (v4) -- (v5);
 \draw (v3) -- (v5);
\node at (21,25) {$f'$}; 
  \node at (15,33) {$u$}; 
  \node at (15,15) {$w$}; 
  \node at (20,11) {$x$}; 
  \node at (27,9) {$y$}; 
\node at (42,36) {$e$}; 
\node at (14,25) {$f$}; 
\node at (10,21) {$v$}; 
\node at (30,39) {$z$};

\end{tikzpicture}
\caption{The cycle $C$ of length at least nine in Proposition~\ref{prop:contractionsc} and two even cycles $C_1$ and $C_2$. The second picture depicts the last case that $C_1$ has four edges and $f$ and $f'$ meet at $u$.}\label{fig:goodchord}
\end{figure}

	Now suppose that $C$ has length at least nine and the assertion holds for all odd cycles shorter than $C$.
	Since $G$ is chordal, $C$ has a chord, say $f=uw$, connecting
	two vertices at distance two on $C$. 
	See Figure~\ref{fig:goodchord}.
	Let $v$ be the common neighbor of $u$ and $w$ in $C$, 
	and let $C'$ be the cycle $(C- v) + f$.
	Since $C'$ is an even cycle of length at least eight  in $G$, 
	it has an odd chord of $C'$, say $f'$.
	Let $C_1, C_2$ be the two distinct cycles in $C' + f'$ containing $f'$
	such that $C_1$ contains the edge $f$. 
	If $e\notin E(C_2)$ then
	$f'$ is an odd chord of $C/e$. 
	Thus we may assume that $e\in E(C_2)$.
	We consider two cases.
	
	\smallskip
	\noindent\emph{Case 1. $C_1$ has length at least six.}
	Here, the cycle $(C_1- f) + uv + vw$ has length at least seven in $G$. 	
	So by the induction hypothesis, 
	$((C_1- f) + uv + vw)/f'$ has an odd chord $h$ in $G/f'$. 
	This chord was a chord in $(C_1- f) + uv + vw$ where the part avoiding $f'$ has odd edges with at least three edges. 
	Therefore, $h$ is an odd chord of $C/e$.
	
	\smallskip
 	\noindent\emph{Case 2. $C_1$ has length four.}
 	See the second picture in Figure~\ref{fig:goodchord}. 
	If $f$ does not meet $f'$, 
	then any chord of the cycle $C_1$ is an odd chord in $C/e$.
	So, we may assume that 
	$C$ contains the path $u-v-w-x-y$ and $f'=uy\in E$.
	If $ux\in E$, then it is an odd chord for $C/e$. 
	We may assume that $ux\notin E$ and $wy\in E$. 

	Since $C_2$ is a cycle of $G$,
	there exists a vertex $z$ in $C_2$
	other than $u$ and $y$ such that
	$uyz$ is a triangle in $G$.
	We claim that $G[\{u,v,w,x,y,z\}]$ is isomorphic to $S_3$.
	Since $\omega(G)\le 3$, $G$ has no edges $ux$, $yv$, $wz$.
	If $G$ has one of edges $xv, vz, zx$, then 
	one of the sets $\{x,v,u,y\}$, $\{v,z,w,y\}$, $\{z,x,u,w\}$ form 
	a cycle of length four in $G$,
	and this cycle forces one of the edges $ux$, $yv$, $wz$.
	Therefore, $G$ also has no edges $xv, vz, zx$.
	So, $G[\{u,v,w,x,y,z\}]$ is isomorphic to $S_3$, contradicting to the assumption that $G$ is strongly chordal.
	
	\smallskip
	We conclude that for every even cycle of length at least six in $G/e$ has an odd chord.
	Therefore, $G/e$ is strongly chordal.
	\end{proof}
		\begin{proof}[Proof of Proposition~\ref{prop:contraction}]
	Let $G$ be a graph and 
	suppose that there exists a strongly chordal graph $H$ of $\omega(H)\le 3$ such that $G$ is a subgraph of $H$.
	Clearly, taking a subgraph does not increase strongly chordal treewidth.
	Also, for $e\in E$, $G/e$ is a subgraph of $H/e$ and
	by Proposition~\ref{prop:contractionsc}, $H/e$ is also strongly chordal.
	Therefore, $\sctw(G/e)\le 2$.
	\end{proof}

\subsection{Characterization with Cycle Model}
	
	We characterize the class of strongly chordal treewidth two in terms of a cycle model.
		
	\begin{THM}\label{thm:main2}
	Let $G=(V,E)$ be a $2$-connected graph. Then 
	$G$ has strongly chordal treewidth two if and only if
	the cell completion $\widetilde{G}$ of $G$ is a tree of two-boundaried cycles. 	
\end{THM}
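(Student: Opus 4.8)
The plan is to prove the two implications separately, following the template of Theorem~\ref{thm:main}: one part shows that $\sctw(G)=2$ forces $\widetilde{G}$ to be a tree of two-boundaried cycles, and the other builds, for each tree of two-boundaried cycles, a strongly chordal supergraph of clique number at most three. Since a $2$-connected tree of cycles has treewidth two, $\sctw(G)\ge 2$ holds automatically, so it is enough to produce an $S_3$-minor in the first direction and a suitable supergraph in the second.

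\emph{Necessity.} Assume $\sctw(G)=2$. As $\tw(G)\le\sctw(G)\le 2$, Theorem~\ref{thm:babette} gives that $\widetilde{G}$ is a tree of cycles, so it remains only to exclude a chordless cycle $C$ of $\widetilde{G}$ carrying three edge separators $e_1=a_1b_1$, $e_2=a_2b_2$, $e_3=a_3b_3$, written in this cyclic order around $C$. For each $i$, Lemma~\ref{lem:avoidingcycle} supplies two internally vertex-disjoint $a_i$--$b_i$ paths of $G$ whose interiors avoid $V(C)$; since $C-\{a_i,b_i\}$ stays connected, the material hanging off distinct $e_i$'s is pairwise disjoint. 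Contracting one such path at each $e_i$ to a single vertex $z_i$ makes $z_i$ adjacent to $a_i$ and $b_i$ and puts $z_i$ outside $V(C)$. Let $U_1,U_2,U_3$ be the three arcs of $C$ cut out by $e_1,e_2,e_3$, indexed so that $e_i$ joins $U_i$ to $U_{i+1}$ (indices mod $3$); when $e_i\notin E(G)$, the second $a_i$--$b_i$ path can be split between $U_i$ and $U_{i+1}$ so that an edge between these two branch sets is still present. Then $U_1U_2U_3$ is a triangle, $z_i$ is adjacent to $U_i$ and $U_{i+1}$, and $z_1,z_2,z_3$ are pairwise non-adjacent: this is an $S_3$-minor of $G$. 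By Proposition~\ref{prop:contraction} and Lemma~\ref{lem:sctws3}, $\sctw(G)\ge\sctw(S_3)=3$, a contradiction.

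\emph{Sufficiency.} Let $G$ be a tree of two-boundaried cycles; the goal is a strongly chordal supergraph $H\supseteq\widetilde{G}$ with $\omega(H)\le 3$. Triangulating each chordless cycle of $\widetilde{G}$ separately keeps the result chordal with clique number three, because the cycles are glued only along edges; by Farber's characterization (Theorem~\ref{thm:charsc}) and $\omega(H)\le 3$, strong chordality then reduces to the absence of an induced $S_3$. I would induct on the number of chordless cycles: peel off a leaf chordless cycle $C$ — which, if $\widetilde{G}$ has more than one cycle, has a single edge separator $xy$ — triangulate $C$ as a fan with apex $x$, and glue its region onto the supergraph $H_0$ of the rest obtained from the induction hypothesis. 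A new induced $S_3$ would have to meet both the old and the new region. Since the apex $x$ is adjacent to all of the new region, the case where its triangle lies in the new region is impossible (one ear would be forced to be non-adjacent to $x$ while lying in $x$'s region); and since a fan is an interval graph and $S_3$ is not, the case where all six vertices lie in the new region is impossible as well. The only remaining case is that the triangle lies in $H_0$ and one ear of the $S_3$ enters the new region through $xy$, which would mean $H_0$ contains ``$S_3$ with the ear on $xy$ deleted'' in a position straddling $xy$.

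That last case is the crux, and I expect it to be the main obstacle: ``$S_3$ minus an ear'' is itself strongly chordal (it is even an interval graph), so it is not excluded by the bare induction hypothesis. The remedy is to strengthen the hypothesis with a bookkeeping invariant, in the spirit of the potential sets and condition~(GC) of Proposition~\ref{prop:chaintree2}: for each edge separator of the current graph one reserves an incident triangle of $H$ so that no ear can ever be attached across an exposed triangle edge, and one must check that this invariant survives each gluing step — including, when a peeled cycle carries two edge separators in the larger graph, choosing the fan apex so that both separators stay protected. Granting the induction, $\widetilde{G}\subseteq H$ with $H$ strongly chordal and $\omega(H)\le 3$ gives $\sctw(G)\le 2$, which together with the necessity direction proves the theorem.
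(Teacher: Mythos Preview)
Your necessity direction matches the paper's Proposition~\ref{prop:nominorsimple} essentially line for line: use Lemma~\ref{lem:avoidingcycle} at each of the three edge separators to produce an $S_3$-minor, then invoke Proposition~\ref{prop:contraction} and Lemma~\ref{lem:sctws3}. That part is fine.

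For sufficiency, however, you are working much harder than needed, and you end up at a gap you yourself flag (``Granting the induction\ldots''). The strengthened invariant you sketch may well be workable, but the paper's Proposition~\ref{prop:simpletree2} sidesteps all of this with a one-shot construction. The key observation is that for any chordless cycle $C$ with two designated edges $u_1v_1,u_2v_2$, one can triangulate $C$ so that \emph{every} resulting triangle contains a third edge that is neither of the two designated ones and is not shared with any other triangle of the triangulation. (This is what the zig-zag triangulation in Figure~\ref{fig:triang} achieves; a plain fan from one endpoint of a separator need not, which is exactly where your induction runs into trouble.) Pad each chordless cycle of $\widetilde G$ out to two ``edge separators'' if it has fewer, triangulate each cycle this way, and call the result $G'$. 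Then $G'$ is chordal with $\omega(G')=3$, and it cannot contain an induced $S_3$: in $S_3$ the central triangle has all three of its edges lying in further triangles, whereas by construction every triangle of $G'$ has at least one edge contained in no other triangle. By Theorem~\ref{thm:charsc}, $G'$ is strongly chordal, and $\sctw(G)\le\sctw(\widetilde G)\le 2$ follows.

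So the missing idea is not a stronger inductive bookkeeping but the direct ``private edge per triangle'' triangulation, which kills the $S_3$ case in one line and removes the need for any induction or case analysis on how a putative $S_3$ straddles the gluing edge.
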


	\begin{PROP}\label{prop:nominorsimple}
	Let $G=(V,E)$ be a $2$-connected graph.
	If $G$ has strongly chordal treewidth two,
    then $\widetilde{G}$ is a tree of two-boundaried cycles.	
	\end{PROP}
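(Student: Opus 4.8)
The statement to prove is Proposition~\ref{prop:nominorsimple}: if a $2$-connected graph $G$ has strongly chordal treewidth two, then $\widetilde{G}$ is a tree of two-boundaried cycles. The overall strategy mirrors the proof of Proposition~\ref{prop:main3} for spaghetti treewidth. First, since $\sctw(G) \ge \tw(G)$, the hypothesis gives $\tw(G) \le 2$; combined with $2$-connectedness and Theorem~\ref{thm:babette}, the cell completion $\widetilde{G}$ is a tree of cycles. So the only thing left to establish is the defining extra condition of a tree of two-boundaried cycles, namely that \emph{every chordless cycle of $\widetilde{G}$ has at most two edge separators}.

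\textbf{Key steps.} I would argue by contradiction: suppose some chordless cycle $C$ of $\widetilde{G}$ has three (or more) edge separators, say $e_1, e_2, e_3$. For each $e_i = u_iv_i$, since $e_i$ is an edge separator it lies in at least two distinct chordless cycles, so $\widetilde{G}[V - \{u_i, v_i\}]$ has a component avoiding all vertices of $C$ except possibly $u_i, v_i$; more precisely, by Lemma~\ref{lem:avoidingcycle}, $G$ has two internally vertex-disjoint paths from $u_i$ to $v_i$ using no vertex of $C$ other than $u_i$ and $v_i$. The plan is then to assemble from $C$ together with these three ``ears'' (one extra path hanging off each of $e_1, e_2, e_3$) a configuration that forces a $3$-sun, or more precisely forces $\sctw(G) \ge 3$ by exhibiting $S_3$ as a minor and invoking Lemma~\ref{lem:sctws3} (after noting $\sctw$ is minor-monotone by Proposition~\ref{prop:contraction}). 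Concretely: contract each of the three off-cycle paths down to a single edge, so that each $e_i$ now sits in a triangle whose apex is outside $C$; contract the arcs of $C$ between consecutive separators to make $C$ itself a triangle on the three ``separator corners.'' What remains is exactly $S_3$ — the inner triangle coming from the contracted cycle $C$ and the three outer vertices coming from the contracted ears — so $S_3$ is a minor of $G$, contradicting $\sctw(G) \le 2$ via Lemma~\ref{lem:sctws3} and minor-monotonicity.

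\textbf{Main obstacle.} The delicate part is handling how the three edge separators sit on the cycle $C$ and ensuring the contractions genuinely yield $S_3$ rather than a smaller or degenerate graph. I need the three separators $e_1, e_2, e_3$ to be pairwise edge-disjoint on $C$ (if two of them shared a vertex the resulting contracted graph might collapse); and I need to check that the off-cycle paths from Lemma~\ref{lem:avoidingcycle} attach cleanly — they are internally disjoint from $C$, but paths for different separators could conceivably share internal vertices, which would create extra adjacencies among the three apex vertices. To avoid this I would choose, for each $e_i$, a single path $P_i$ lying inside a fixed component of $\widetilde{G}[V - \{u_i, v_i\}]$ disjoint from $C$, and argue these components are pairwise disjoint (because $C$ separates the ``inside'' parts hanging off distinct edges of $C$ in a tree of cycles). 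Contracting each $P_i$ to an edge and contracting the three arcs of $C$ then produces a graph on six vertices that is precisely $S_3$: the three corners of the shrunk $C$ form the central triangle, and the three apexes form the outer independent set each adjacent to exactly the two corners of its separator. With $S_3$ as a minor, minor-monotonicity of $\sctw$ and $\sctw(S_3) = 3$ finish the contradiction, so no chordless cycle of $\widetilde{G}$ has more than two edge separators, and $\widetilde{G}$ is a tree of two-boundaried cycles.
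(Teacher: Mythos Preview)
Your proposal is correct and follows essentially the same approach as the paper: reduce to a tree of cycles via Theorem~\ref{thm:babette}, assume some chordless cycle $C$ has at least three edge separators, invoke Lemma~\ref{lem:avoidingcycle} to get the off-cycle ears, and contract to exhibit an $S_3$ minor, then finish with Lemma~\ref{lem:sctws3} and Proposition~\ref{prop:contraction}. The paper's proof is terser about the contraction step (it simply asserts the $S_3$ minor), whereas you spell out the potential issues with shared vertices among separators and among ears; your resolution via the tree-of-cycles structure (the branches hanging off distinct edge separators of $C$ are internally disjoint) is exactly the justification the paper leaves implicit.
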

	
	\begin{proof}
	Suppose $G$ has strongly chordal treewidth two.
	Since $G$ is $2$-connected and has treewidth two,	
	by Theorem~\ref{thm:babette}, $\widetilde{G}$ is a tree of cycles.
	Suppose that a chordless cycle $C$ of $\widetilde{G}$ has edge separators $\{v_iw_i\}_{1\le i\le k}$ where $k\ge 3$.
	By Lemma~\ref{lem:avoidingcycle}, for each $1\le i\le k$, 
	$G$ has two internally vertex-disjoint paths $P^i_1$, $P^i_2$ from $v_i$ to $w_i$ in $G$ such that they have no vertices of $C$ except $v_i$ and $w_i$.
	So, $C- v_1w_1- v_2w_2 \cdots - v_kw_k$ with the paths $\bigcup_{1\le i\le k} \{P^i_1, P^i_2\}$ in $G$ has a minor isomorphic to $S_3$. 
	By Lemma~\ref{lem:sctws3} and Proposition~\ref{prop:contraction},
	$\sctw(G)\ge 3$ and it contradicts to the assumption on $G$.
		\end{proof}

	For the opposite direction, we prove the following. 
		
	\begin{PROP}\label{prop:simpletree2}
	A tree of two-boundaried cycles has strongly chordal tree-width two.
	\end{PROP}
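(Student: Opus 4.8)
The plan is to show that every tree of two-boundaried cycles $G$ is a subgraph of a strongly chordal graph with clique number at most three, which by definition gives $\sctw(G) \le 2$. The natural candidate for the supergraph is a \emph{triangulation} of $G$: we will triangulate each chordless cycle of $G$ by adding chords so that the result is chordal, and then argue that a careful choice of triangulation avoids creating any induced sun. Since every chordless cycle of $G$ has at most two edge separators, each cycle attaches to the rest of $G$ along at most two edges, and this is precisely the structural slack we will exploit.

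First I would set up the induction on the number of edge separators of $G$, mirroring the argument in Proposition~\ref{prop:chaintree2}. In the base case $G$ is a single chordless cycle $c_1 c_2 \cdots c_m c_1$, which we triangulate as a ``fan'' from $c_1$, i.e.\ adding all chords $c_1 c_i$; this fan triangulation is strongly chordal (it is an interval graph, in fact an RDV graph) and has clique number $3$. For the inductive step, pick an edge separator $uv$, let $H_1, \ldots, H_k$ be the components of $G[V - \{u,v\}]$, and set $G_i = G[V(H_i) \cup \{u,v\}]$. Each $G_i$ is again a tree of two-boundaried cycles with fewer edge separators (using an analogue of Lemma~\ref{lem:potential} to see that the cell-completion structure is inherited), so by induction each $G_i$ has a chordal, sun-free supergraph $\widehat{G_i}$ with $\omega(\widehat{G_i}) \le 3$. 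The key point is that since the cycle $C_i$ of $G_i$ containing $uv$ has at most two edge separators, we may insist that the triangulation of $C_i$ in $\widehat{G_i}$ uses $uv$ as the apex of its fan (or at least is ``aligned'' at $uv$), so that all $\widehat{G_i}$ agree on the edge $uv$; gluing them along $uv$ then produces the desired supergraph $\widehat{G}$ of $G$.

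The main obstacle will be verifying that the glued graph $\widehat{G}$ contains no induced sun, so that Theorem~\ref{thm:charsc} (Farber) applies. Chordality of $\widehat{G}$ is routine from the clique-sum structure over the clique $\{u,v\}$. For sun-freeness, I would argue that any induced sun must be contained in a single ``block'' of the triangulated structure — an $n$-sun has a clique $U$ of size $n \ge 3$, so it lives where $\omega$ allows it, and since $\omega(\widehat{G}) \le 3$ any sun is a $3$-sun $S_3$ on six vertices; such a configuration would have to lie within one triangulated chordless cycle together with the attachment edges, and the fan triangulations we chose simply never create an induced $S_3$ (a fan of triangles is strongly chordal, being an interval graph). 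The delicate bookkeeping is ruling out a $3$-sun that straddles the gluing edge $uv$: here the restriction to \emph{two} edge separators per cycle is what forbids the relevant pattern, because an $S_3$ straddling $uv$ would require three ``prongs'' attached around a cycle through $uv$, forcing a third edge separator. I would make this precise by case analysis on how the six vertices of a putative $S_3$ distribute among the $G_i$ and the fan of the cycle $C_i$ containing $uv$, and conclude that $\widehat{G}$ is strongly chordal with $\omega(\widehat{G}) \le 3$, hence $\sctw(G) \le 2$. Combined with $\tw(G) = 2$ (since $G$ is a $2$-connected tree of cycles, so $\tw(G) \ge 2$) and $\tw \le \sctw$, this yields $\sctw(G) = 2$.
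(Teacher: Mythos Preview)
Your high-level strategy matches the paper's: build a chordal triangulation of $G$ with $\omega \le 3$ and apply Farber's characterization (Theorem~\ref{thm:charsc}) to rule out induced $3$-suns. The gap is in the triangulation itself. A fan from a single apex does \emph{not} work when a chordless cycle carries two edge separators, and your sun-freeness sketch (``an $S_3$ straddling $uv$ would force a third edge separator'') is not the right reason. Concretely, take the $6$-cycle on $a,b,c,d,e,f$ with pendant simplicial triangles on $ab$ (new vertex $h$) and on $de$ (new vertex $g$); this is a tree of two-boundaried cycles. Your induction splits on, say, $de$, recurses, and eventually fans the $6$-cycle from an endpoint of the remaining separator $ab$, say $a$, adding chords $ac,ad,ae$. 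After gluing everything back, $\{a,d,e\}$ is a triangle and $c,f,g$ are pairwise nonadjacent with $c$ adjacent to exactly $a,d$ among $\{a,d,e\}$, $f$ to exactly $a,e$, and $g$ to exactly $d,e$: an induced $S_3$. Fanning from $d$ or $e$ instead produces the symmetric $S_3$ centred at $\{d,b,a\}$ using $c,h,f$. In each case the culprit is the fan triangle sitting on the \emph{other} separator, all three of whose edges end up shared.

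The paper avoids induction and fixes this in one stroke. For each chordless cycle, with its (at most two) edge separators $u_1v_1$ and $u_2v_2$, triangulate the cycle as a \emph{path of triangles} running from $u_1v_1$ to $u_2v_2$ (see Figure~\ref{fig:triang}); then every triangle of that cycle's triangulation has an edge, other than $u_1v_1$ and $u_2v_2$, lying in no other triangle of that cycle. Since cycles are glued only along the separators, such an edge remains private in the full triangulation $G'$. But the central triangle of any induced $S_3$ in $G'$ would have all three edges contained in other triangles of $G'$, a contradiction. The two-boundaried hypothesis is used precisely to make this private-edge triangulation possible, not to forbid three prongs directly. If you want to salvage the inductive framing you must strengthen the hypothesis to track \emph{both} separators of the cycle containing the current splitting edge, and abandon the single-apex fan in favour of the path-of-triangles triangulation.
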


	\begin{proof}
	Let $G=(V,E)$ be a tree of two-boundaried cycles. 
	We will construct a graph $G'$ from $G$ by triangulating each chordless cycle such that $G'$ is a strongly chordal graph with $\omega (G')=3$.

			\begin{figure}[t]\centering
 \tikzstyle{v}=[circle, draw, solid, fill=black, inner sep=0pt, minimum width=3pt]
\begin{tikzpicture}[scale=0.1]

 \node[v](v1) at (0,30) {}; 
 \node[v](v2) at (30,30) {}; 
 \node[v](v3) at (3,37) {}; 
 \node[v](v4) at (27,37) {}; 
 \node[v](v5) at (3,23) {}; 
 \node[v](v6) at (27,23) {}; 
  \foreach \x in {20}
 {
 \node[v] at (10,\x) {}; 
 \node[v] at (20,\x) {};
 \draw (v1) --(v5)-- (10,\x)--(20,\x)--(v6)--(v2);
 }
  \foreach \x in {40}
 {
 \node[v] at (10,\x) {}; 
 \node[v] at (20,\x) {};
 \draw (v1) --(v3)-- (10,\x)--(20,\x)--(v4)--(v2);
 }
 \draw[red, thick] (v1) --(v5);
 \draw[red, very thick] (v6) --(v2);
 
 \node at (-2,33) {$c_1$}; 
 \node at (3,40) {$c_2$}; 
 \node at (10,43) {$c_3$}; 
 \node at (20,43) {$c_4$}; 
 \node at (27,40) {$c_5$}; 
 \node at (32,33) {$c_6$}; 

 \node at (3,20) {$d_1$}; 
 \node at (10,17) {$d_2$}; 
 \node at (20,17) {$d_3$}; 
 \node at (27,20) {$d_4$}; 

 \draw (v1) -- (10,20);
 \draw (v1) -- (20,20);
 \draw (v1) -- (v6);

  \draw (v6) -- (v3);
 \draw (v6) -- (10,40);
 \draw (v6) -- (20,40);
 \draw (v6) -- (v4);
 
 \node[v](w1) at (-5,32) {}; 
 \node[v](w2) at (-3,20) {}; 
\node[v](w5) at (-10,32) {}; 
 \node[v](w6) at (-8,20) {}; 

  \node[v](w3) at (35,32) {}; 
 \node[v](w4) at (33,20) {}; 
  \node[v](w7) at (40,32) {}; 
 \node[v](w8) at (38,20) {}; 
 
 \draw (v1)--(w1)--(w5);
 \draw (v5)--(w2)--(w6);
 \draw (v2)--(w3)--(w7);
 \draw (v6)--(w4)--(w8);

 \draw (w5)--(v5)--(w1);
 \draw (w4)--(v2)--(w8);

\node at (-10,25) {$\cdots$}; 
\node at (40,25) {$\cdots$}; 
\end{tikzpicture}
\caption{Triangulating each chordless cycle in Proposition~\ref{prop:simpletree2}.}\label{fig:triang}
\end{figure}
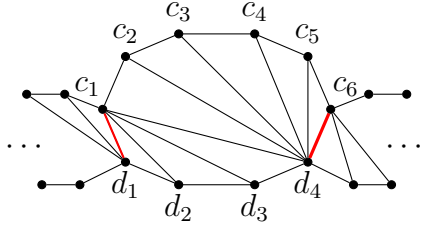

	By the definition of a tree of two-boundaried cycles, 
	each chordless cycle of $G$ has at most two edge separators.
	For convenience, we choose up to two non-edge separator edges in each chordless cycle  
	and call them also edge separators 
	so that each chordless cycle has exactly two edge separators.
	Note that for any chordless cycle $C$ of $G$ and edges $u_1v_1$ and $u_2v_2$ in $C$,
	we can triangulate $C$ into $C'$ with maximum clique size at most 
	three so that every triangle in $C'$
has an edge other than $u_1v_1, u_2v_2$, which is not contained in any other triangles of $C'$. 
	Let $G'$ be the graph obtained from $G$ by triangulating each chordless cycle as described. See Figure~\ref{fig:triang}.
			
	Since $\omega (G')=3$, it remains to prove that $G'$ is strongly chordal.
	Suppose that $G'$ is not strongly chordal.
	Since $G'$ is a chordal graph and $\omega (G')=3$, by Theorem~\ref{thm:charsc},
	$G'$ has an induced subgraph isomorphic to $S_3$.
	However, since every triangle of $G'$ has an edge 
	which is not contained in other triangles of $G'$, it leads a contradiction.
	\end{proof}

\subsection{The Minor Obstruction Set for Strongly Chordal Treewidth Two}
	We provide the minor obstruction set for the class of graphs of strongly chordal treewidth at most two.
	\begin{THM}\label{thm:sctwmo}
	Let $G$ be a $2$-connected graph. 
	The graph $G$ has strongly chordal treewidth two
	if and only if it has no minor isomorphic to $K_4$ or $S_3$.
\end{THM}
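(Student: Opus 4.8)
The plan is to mimic the proof of Theorem~\ref{thm:sptwmo} for spaghetti treewidth, now invoking the strongly chordal analogues established above: minor-closedness (Proposition~\ref{prop:contraction}), the lower bound $\sctw(S_3)=3$ (Lemma~\ref{lem:sctws3}), and the cycle-model characterization (Theorem~\ref{thm:main2}). Once these are in hand both implications are short, so the theorem is essentially an assembly step.

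For the forward direction I would argue as follows. Assuming $\sctw(G)\le 2$, we have $\tw(G)\le\sctw(G)\le 2$ while $\tw(K_4)=3$, so $G$ has no $K_4$ minor. Since the class of graphs of strongly chordal treewidth at most two is closed under taking minors by Proposition~\ref{prop:contraction}, an $S_3$ minor of $G$ would give $\sctw(S_3)\le 2$, contradicting Lemma~\ref{lem:sctws3}; hence $G$ has no $S_3$ minor either.

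For the backward direction I would assume that $G$ has no minor isomorphic to $K_4$ or $S_3$ and argue by contradiction, supposing $\sctw(G)\ge 3$. Since $G$ is $2$-connected and has no $K_4$ minor, $\tw(G)\le 2$, so by Theorem~\ref{thm:babette} the cell completion $\widetilde{G}$ is a tree of cycles; and because $\sctw(G)\ge 3$, Theorem~\ref{thm:main2} forces $\widetilde{G}$ to fail to be a tree of two-boundaried cycles. Then some chordless cycle $C$ of $\widetilde{G}$ carries at least three edge separators $v_1w_1$, $v_2w_2$, $v_3w_3$, and by Lemma~\ref{lem:avoidingcycle} each $v_iw_i$ can be rerouted around $C$ by two internally vertex-disjoint paths in $G$ meeting $C$ only in $v_i$ and $w_i$; deleting $v_1w_1$, $v_2w_2$, $v_3w_3$ from $C$ and gluing in these six paths produces a subgraph of $G$ that contracts onto $S_3$, exactly as in the proof of Proposition~\ref{prop:nominorsimple}. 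This is the promised contradiction, so $\sctw(G)\le 2$.

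I do not expect a serious obstacle, since the real work was done in the preceding subsections (minor-closedness in Propositions~\ref{prop:contractionsc} and~\ref{prop:contraction}, and the structural characterization in Theorem~\ref{thm:main2} via Propositions~\ref{prop:nominorsimple} and~\ref{prop:simpletree2}). The one point worth double-checking is that the $S_3$-minor extraction in the backward direction does not secretly use the hypothesis $\sctw(G)=2$; inspecting the proof of Proposition~\ref{prop:nominorsimple} confirms that it only uses the fact that $\widetilde{G}$ has a chordless cycle with three or more edge separators, so that portion can be quoted unchanged.
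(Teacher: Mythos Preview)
Your proposal is correct and follows essentially the same route as the paper's own proof: both directions invoke Lemma~\ref{lem:sctws3} and Proposition~\ref{prop:contraction} for the obstruction side, and for the converse use Theorem~\ref{thm:main2} (via Theorem~\ref{thm:babette}) to locate a chordless cycle with three edge separators and then extract an $S_3$ minor with Lemma~\ref{lem:avoidingcycle} exactly as in Proposition~\ref{prop:nominorsimple}. Your final remark checking that the $S_3$-extraction does not depend on $\sctw(G)=2$ is apt and matches how the paper reuses that argument.
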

	\begin{proof}
	If $G$ has a minor isomorphic to $K_4$ or $S_3$,
	then by Lemma~\ref{lem:sctws3} and Proposition~\ref{prop:contraction},
	$\sctw(G)\ge 3$.
	Suppose $G$ has no minor isomorphic to $K_4$ and $\sctw(G)\ge 3$.
	Then, by Theorem~\ref{thm:main2}, $\widetilde{G}$ is a tree of cycles but not a tree of two-boundaried cycles. 
	Therefore, $\widetilde{G}$ has a chordless cycle having at least three edge separators.
	As we already observed in Proposition~\ref{prop:nominorsimple},
	by Lemma~\ref{lem:avoidingcycle}, we can easily show that $G$ has a minor isomorphic to $S_3$.	
	\end{proof}

	\begin{proof}[Proof of Theorem~\ref{thm:mainsc}]
	From the definition of strongly chordal treewidth,
the strongly chordal treewidth of a graph is the maximum of this parameter over all blocks of it.
	With this observation, by Theorem~\ref{thm:main2}, the statements (1) and (2) are equivalent.
	Also, Theorem~\ref{thm:sctwmo} implies that (1) and (3) are equivalent.
	\end{proof}

\section{Characterizations of Directed Spaghetti \\ Treewidth Two and Special Treewidth Two}
\label{section:special}

    In this section, we mainly characterize the class of graphs having special treewidth at most two, and the class of graphs having directed spaghetti treewidth.
    
    A graph is called a \emph{mamba}\footnote{Mambas are a type of snakes.} if 
it is either a 2-connected graph of pathwidth two, or a single edge, or an isolated vertex. 
	The notion of mambas reflects the linear structure of them, 
	and we will define head vertices of mambas which have a key role in our characterization.	
	
	We first 
    show that every block of a graph of directed spaghetti treewidth at most two, or special treewidth at most two is a mamba.
	For directed spaghetti treewidth, we directly obtain a characterization of width at most two for general cases.
	For special treewidth, we will see how the different mambas are glued to make graphs of special treewidth at most two.

\subsection{Mambas}

	We obtain the following characterization of $2$-connected mambas as a corollary of the results in Section~\ref{section:spaghetti} and Section~\ref{section:stronglychordal}.
	\begin{COR}\label{cor:main6}
	Let $G$ be a $2$-connected graph. The following are equivalent.
	\begin{enumerate}
	\item $G$ has pathwidth two (equivalently, $G$ is a mamba).
	\item $G$ has special treewidth two.
	\item $G$ has directed spaghetti treewidth two.
	\item $G$ has no minor isomorphic to $K_4$, $D_3$ or $S_3$.
	\item The cell completion $\widetilde{G}$ of $G$ is a path of cycles. 	
	\end{enumerate}
	\end{COR}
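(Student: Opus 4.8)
The plan is to establish the five equivalences cyclically, leaning on the cycle-model characterizations already proved for spaghetti and strongly chordal treewidth together with the known result (Theorem~\ref{thm:babette2}) that a $2$-connected graph has pathwidth two iff its cell completion is a path of cycles. First I would prove $(1)\Leftrightarrow(5)$: this is exactly Theorem~\ref{thm:babette2}, and the parenthetical ``$G$ is a mamba'' is just the definition of a mamba for $2$-connected $G$, so nothing is needed there. Next I would handle $(1)\Rightarrow(2)\Rightarrow(3)\Rightarrow\ldots$ using the chain of inequalities $\tw(G)\le\sptw(G)\le\dptw(G)\le\spctw(G)\le\pw(G)$ recorded in the preliminaries: since $G$ is $2$-connected with pathwidth two it is not a forest, so $\tw(G)\ge 2$, hence all of $\sptw(G),\dptw(G),\spctw(G),\pw(G)$ equal $2$; in particular $(1)$ implies $(2)$ and $(3)$ simultaneously.

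The substantive direction is closing the cycle, i.e. showing that $(2)$, $(3)$ or $(4)$ each forces pathwidth two. For $(4)\Rightarrow(5)$ I would argue: if $G$ has no $K_4$-minor then $\tw(G)\le 2$, so by Theorem~\ref{thm:babette} the cell completion $\widetilde G$ is a tree of cycles; if moreover $\widetilde G$ is not a path of cycles, then one of the two defining conditions of Definition~\ref{def:babette2} fails. If condition~(1) fails, some chordless cycle has at least three edge separators, and by Lemma~\ref{lem:avoidingcycle} (as used in Proposition~\ref{prop:nominorsimple}) $G$ has an $S_3$-minor. If condition~(2) fails, some edge is contained in $m\ge 3$ chordless cycles, at least three of which are not simplicial triangles, hence are body cycles; then $\widetilde G[V-\{u,v\}]$ has three components with at least two vertices, and Lemma~\ref{lem:twocompos} yields three internally vertex-disjoint $u$--$v$ paths of length $\ge 3$, i.e.\ a $D_3$-minor. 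Either way $G$ has a minor isomorphic to $K_4$, $D_3$ or $S_3$, contradicting $(4)$; this gives $(4)\Rightarrow(5)$. Finally, $(2)\Rightarrow(4)$: if $G$ had a $K_4$, $D_3$ or $S_3$ minor, then respectively $\spctw(G)\ge\tw(G)\ge 3$, or (by Lemma~\ref{lem:h3minor} and $\sptw\le\spctw$, noting a $D_3$-minor in a $2$-connected graph gives a subdivision of $D_3$ as a subgraph via Lemma~\ref{lem:twocompos}) $\spctw(G)\ge\sptw(G)\ge 3$, or (by Lemma~\ref{lem:sctws3}, Proposition~\ref{prop:contraction} and $\spctw\ge\sctw$) $\spctw(G)\ge\sctw(G)\ge 3$; in all cases $\spctw(G)\ne 2$. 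The same argument with $\dptw$ in place of $\spctw$ gives $(3)\Rightarrow(4)$, using $\tw\le\dptw$, $\sptw\le\dptw$ and $\sctw\le\spctw$ — here one must be slightly careful since $\sctw\le\dptw$ is not listed directly, so for the $S_3$ case in $(3)\Rightarrow(4)$ I would instead route through Theorem~\ref{thm:mainsc}: a $D_3$- or $S_3$-minor-free obstruction argument, or observe $\dptw\le\spctw$ is the wrong direction and rather use that $\dptw(S_3)\ge 3$ directly from $\sptw(S_3)\ge\sptw$ of... — see the next paragraph.

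The one genuinely delicate point is the $S_3$ case when deducing $(3)\Rightarrow(4)$, because the inequality chain gives $\sptw\le\dptw$ and $\sctw\le\spctw$ but not an obvious lower bound on $\dptw$ from $\sctw$. I expect this to be the main obstacle, and I would resolve it by noting $S_3$ is $2$-connected of pathwidth two (Figure~\ref{fig:pw2}), so if $G$ contains $S_3$ as a minor then $G$ contains a subdivision-like structure forcing $\dptw\ge 3$; concretely, it suffices to show $\dptw(S_3)=3$, which follows because $\tw(S_3)=2$ while any directed spaghetti tree decomposition is in particular a tree decomposition whose path-constraints are strictly stronger — one can check by a short case analysis (or cite that $S_3$ is not a DV graph with clique number $3$, equivalently $S_3$ has $\dptw\ge 3$) that width two is impossible, exactly as in the pattern of Lemma~\ref{lem:h3minor}. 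With $\dptw(K_4)\ge 3$ from treewidth, $\dptw(D_3)\ge 3$ from $\sptw(D_3)=3$ and $\sptw\le\dptw$, and $\dptw(S_3)\ge 3$, the contrapositive of $(3)$ follows since directed spaghetti treewidth is minor-monotone only on $2$-connected... actually $\dptw$ need not be minor-monotone in general, so I would argue instead that a $D_3$ or $S_3$ \emph{minor} of the $2$-connected $G$ yields the corresponding \emph{topological minor} (subdivision as subgraph) via Lemma~\ref{lem:twocompos} and Lemma~\ref{lem:avoidingcycle}, and $\dptw$ does not increase under taking subgraphs. Assembling: $(1)\Leftrightarrow(5)$ (Theorem~\ref{thm:babette2}), $(1)\Rightarrow(2)$ and $(1)\Rightarrow(3)$ (inequality chain, $\tw\ge2$), $(2)\Rightarrow(4)$ and $(3)\Rightarrow(4)$ (lower bounds above), $(4)\Rightarrow(5)$ (failure of Definition~\ref{def:babette2} forces a forbidden minor), which closes the loop and proves all five equivalent.
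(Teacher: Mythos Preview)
Your outline is largely sound and your $(4)\Rightarrow(5)$ argument matches the paper's (which routes it through Propositions~\ref{prop:main3} and~\ref{prop:nominorsimple}, but the content is the same). The chain $(5)\Rightarrow(1)\Rightarrow(2)\Rightarrow(3)$ is also fine.

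The real gap is exactly where you suspected, in $(3)\Rightarrow(4)$ for the $S_3$ case, and your proposed fix does not work. You write that an $S_3$-minor of the $2$-connected $G$ ``yields the corresponding topological minor (subdivision as subgraph) via Lemma~\ref{lem:twocompos} and Lemma~\ref{lem:avoidingcycle}'', but this is false: $S_3$ has vertices of degree four, so an $S_3$-minor does \emph{not} in general produce a subdivision of $S_3$ as a subgraph (the paper itself flags this issue at the start of Section~\ref{section:stronglychordal}). Even if it did, you would still need the unproved statement that every subdivision of $S_3$ has $\dptw\ge 3$; there is no analogue of Lemma~\ref{lem:h3minor} for $S_3$. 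And knowing only $\dptw(S_3)=3$ is useless without minor-monotonicity of $\dptw$, which you correctly note you do not have.

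The paper closes this gap differently and more cleanly: it proves (Lemma~\ref{lem:dpmax3}, via Panda's theorem that DV graphs contain no induced odd sun) that every DV graph with clique number at most three is strongly chordal. Hence $\dptw(G)\le 2$ implies $\sctw(G)\le 2$, and now Theorem~\ref{thm:sctwmo} (which rests on the already-established minor-closure of $\{\sctw\le 2\}$, Proposition~\ref{prop:contraction}) excludes $S_3$. Combined with $\sptw(G)\le\dptw(G)\le 2$ and Theorem~\ref{thm:sptwmo} to exclude $K_4$ and $D_3$, this gives $(3)\Rightarrow(4)$ without ever needing $\dptw$ to behave well under minors.
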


   For the direction $(3)\Rightarrow (4)$, we prove that every DV graph with clique number at most three is strongly chordal. This gives a relation between directed spaghetti treewidth and strongly chordal treewidth when the parameter is at most two. 
   A sun is called \emph{even} (or \emph{odd}) if the size of the central clique is even (or odd).
   \begin{THM}[Panda~\cite{Panda99}]\label{thm:panda}
   Every DV graph has no induced subgraph isomorphic to an odd sun.
   \end{THM}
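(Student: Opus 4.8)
The plan is to reduce the assertion to a statement about the $n$-sun itself and then settle that via clique trees. Since a DV graph stays a DV graph after deleting vertices --- one simply discards the corresponding directed paths from the representation in the (unchanged) directed tree --- the class of DV graphs is closed under taking induced subgraphs. Hence it suffices to prove that for every odd $n\ge 3$ the $n$-sun $S_n$ is not a DV graph.

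Next I would pin down the clique structure of $S_n$. With $U=\{u_1,\dots,u_n\}$ the central clique, $W=\{w_1,\dots,w_n\}$, and $C_i$ the triangle formed by $w_i$ and its two neighbours in $U$, the maximal cliques of $S_n$ are exactly $U$ and $C_1,\dots,C_n$. I would then check that the minimal vertex separators are exactly the $n$ pairs of consecutive vertices of $U$: each such pair separates the corresponding $w_i$ from the rest, and these are the only ones, since a minimal separator cannot contain any $w_i$, and it cannot be a subset of $U$ of size one, of size two but non-consecutive, or of size at least three (in each of these cases at most one component of the deletion is full). Each of these separators has multiplicity one. Because a clique tree on the $n+1$ maximal cliques has exactly $n$ edges whose multiset of labels is precisely the multiset of minimal separators, and because the only maximal cliques containing a given consecutive pair are $U$ and the one triangle $C_i$ sharing that pair, the clique tree is forced to be the star with centre $U$ and leaves $C_1,\dots,C_n$; in particular it is unique.

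Finally I would invoke the characterization (Gavril; Monma and Wei~\cite{MonmaW86}) that a chordal graph is a DV graph if and only if some clique tree of it admits an orientation of its edges under which, for every vertex $v$, the maximal cliques containing $v$ induce a directed path. For $S_n$ that clique tree must be the star above. The maximal cliques containing $u_j$ are $U$ together with the two triangles $C_j,C_{j+1}$ joined to $U$ by the star-edges $UC_j$ and $UC_{j+1}$, so the condition at $u_j$ forces these two edges to be oriented oppositely at $U$. Encoding the orientation of $UC_i$ by $\sigma(i)\in\{+1,-1\}$, we need $\sigma(i)\neq\sigma(i+1)$ for all $i$ modulo $n$, i.e.\ $\sigma$ is a proper $2$-colouring of an odd cycle --- impossible. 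Hence $S_n$ is not a DV graph, so no DV graph contains an induced odd sun.

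The main obstacle I anticipate is the combinatorial bookkeeping of the second step: checking that the minimal separators of $S_n$ are precisely the consecutive pairs in $U$, each of multiplicity one, so that the clique tree is genuinely unique; without uniqueness one would instead have to argue that \emph{no} clique tree of $S_n$ is orientable, which is a priori less convenient. A more hands-on alternative, bypassing clique trees, is to fix a DV representation, use the Helly property of subtrees to find a common sub-dipath $Q$ of the paths representing $U$, observe that the path representing each $w_i$ hangs off one of the two endpoints of $Q$ (at an interior vertex of $Q$ a dipath containing $Q$ has no free arc), and then chase arc directions at those two endpoints; but the resulting case analysis --- especially when several $w_i$ attach on the same side of $Q$ --- is appreciably messier, which is why I would prefer the clique-tree route.
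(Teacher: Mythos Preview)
The paper does not prove this statement; Theorem~\ref{thm:panda} is quoted from Panda~\cite{Panda99} and used as a black box in the proof of Lemma~\ref{lem:dpmax3}. So there is no ``paper's own proof'' to compare against, and your proposal stands on its own merits.

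Your argument is correct. The reduction to showing that the odd sun $S_n$ itself is not a DV graph is immediate from hereditariness. The clique-tree analysis is accurate: the maximal cliques of $S_n$ are $U$ together with the $n$ boundary triangles; the minimal separators are precisely the $n$ consecutive pairs in $U$, each of multiplicity one (removing a single $u_i$ does not disconnect $S_n$, while removing a consecutive pair isolates exactly one $w_i$ and leaves exactly two full components); and since each such pair lies in exactly two maximal cliques, namely $U$ and a single triangle, every clique-tree edge is forced to join $U$ to some $C_i$, so the star is the unique clique tree. The Monma--Wei characterization of DV graphs via an orientable clique tree then reduces the question to properly $2$-colouring an $n$-cycle, which fails for odd $n$.

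One remark on presentation: you should state explicitly which version of the DV characterization you are invoking (that $G$ is DV if and only if some clique tree of $G$ admits an edge-orientation under which, for every vertex, the maximal cliques containing it induce a directed path), and cite it; this is in Monma and Wei~\cite{MonmaW86}. Panda's original proof in~\cite{Panda99} proceeds differently, deriving a full forbidden-induced-subgraph characterization of DV graphs, of which the odd suns form one family; your route via the forced star clique tree is shorter and more transparent for this single family, at the cost of importing the Monma--Wei theorem.
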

  
	\begin{LEM}\label{lem:dpmax3}
	Every DV graph with clique number at most three
	is strongly chordal. 
	\end{LEM}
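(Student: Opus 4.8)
The plan is to prove that every DV graph $G$ with $\omega(G)\le 3$ is strongly chordal by using Farber's characterization (Theorem~\ref{thm:charsc}): since a DV graph is chordal, it suffices to show that $G$ has no induced subgraph isomorphic to a sun. By Theorem~\ref{thm:panda}, $G$ has no induced \emph{odd} sun, so the only thing left to rule out is an induced \emph{even} $n$-sun for some $n\ge 4$. Now an $n$-sun contains a clique of size $n$ on its central vertices $U=\{u_1,\dots,u_n\}$, so if $G$ contained an induced $n$-sun then $\omega(G)\ge n\ge 4$, contradicting the hypothesis $\omega(G)\le 3$.

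So the argument is essentially a two-line deduction once the right facts are in place: a DV graph is chordal (stated in the excerpt: (DV)$\subsetneq$(chordal)), it has no induced odd sun (Panda's theorem), and the clique-number bound kills all even suns because the smallest sun that is even is the $4$-sun, which has $\omega=4$. Then Farber's theorem (chordal $+$ no induced sun $\iff$ strongly chordal) completes the proof.

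First I would write: let $G$ be a DV graph with $\omega(G)\le 3$. By the containment (DV)$\subsetneq$(chordal), $G$ is chordal. Suppose for contradiction that $G$ is not strongly chordal; then by Theorem~\ref{thm:charsc} it has an induced subgraph isomorphic to an $n$-sun $H$ for some $n\ge 3$. The central vertices of $H$ form a clique of size $n$, hence $\omega(G)\ge n$, forcing $n=3$, i.e.\ $H$ is an odd sun ($S_3$ is the $3$-sun). But Theorem~\ref{thm:panda} says a DV graph has no induced odd sun, a contradiction. Hence $G$ is strongly chordal.

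There is no real obstacle here — the lemma is a direct corollary of Panda's theorem together with the trivial observation that any sun on $2n$ vertices contains $K_n$. The only subtlety is bookkeeping: being careful that "odd sun" means the central clique has odd size (so $S_3$, the $3$-sun, is odd) and that the clique-number constraint $\omega\le 3$ by itself already excludes every $n$-sun with $n\ge 4$, regardless of parity; Panda's theorem is then needed precisely to handle $n=3$. I would state these points explicitly so the reader sees that both ingredients are genuinely used.
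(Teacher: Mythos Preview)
Your proof is correct and essentially identical to the paper's: both use Panda's theorem to exclude induced odd suns, the clique-number bound $\omega(G)\le 3$ to exclude all suns with $n\ge 4$ (in particular even suns), and then Farber's characterization to conclude strong chordality. The only cosmetic difference is that you phrase it as a contradiction, while the paper argues directly.
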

	\begin{proof}
	Let $G$ be a DV graph with clique number three.
	Since $G$ is a DV graph, 
	by Theorem~\ref{thm:panda}, $G$ has no induced subgraph isomorphic to an odd sun.
	Since $\omega(G)\le 3$, 
	$G$ has no induced subgraph isomorphic to an even sun.
	Therefore, 
	by Theorem~\ref{thm:charsc},
	$G$ is strongly chordal.
	\end{proof}

	\begin{proof}[Proof of Corollary~\ref{cor:main6}]
	By Theorem~\ref{thm:babette2}, $(5)$ implies $(1)$, and from the inequalities between the parameters, $(1)$ implies $(2)$ and $(2)$ implies $(3)$.
	
	$(3)\Rightarrow (4)$ :
	If $G$ has directed spaghetti treewidth two,
	then $G$ has spaghetti treewidth at most two.
	Also, by Lemma~\ref{lem:dpmax3}, 
	if $G$ has directed spaghetti treewidth two, then $G$ has strongly chordal treewidth at most two.
	Since $G$ is $2$-connected, 
	by Theorem~\ref{thm:sptwmo} and~\ref{thm:sctwmo},
	$G$ has no minor isomorphic to $K_4$, $D_3$ or $S_3$.	
	
	$(4)\Rightarrow (5)$ :
	Suppose $G$ has no minor isomorphic to $K_4$, $D_3$ or $S_3$.
	Since $G$ is $2$-connected, 
	by Proposition~\ref{prop:main3} and \ref{prop:nominorsimple},
	$\widetilde{G}$ is both a chain tree of cycles and a tree of two-boundaried cycles.
	By the definition of a path of cycles, $\widetilde{G}$ is a path of cycles.
\end{proof}

The graphs of directed
spaghetti treewidth at most two are exactly the graphs whose block is a mamba.

\begin{THM}\label{theorem:dstwblocks}
Let $G=(V,E)$ be a graph. The following are equivalent.
\begin{enumerate}
\item $G$ has directed spaghetti treewidth at most two.
\item Each block of $G$ is a mamba.
\item $G$ has no minor isomorphic to $K_4$, $D_3$ or $S_3$.
\end{enumerate}
\end{THM}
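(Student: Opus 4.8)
```latex
The plan is to prove the cycle of equivalences $(1)\Rightarrow(2)\Rightarrow(3)\Rightarrow(1)$, leaning heavily on Corollary~\ref{cor:main6}, which already handles the $2$-connected case.

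First I would prove $(1)\Rightarrow(2)$. Suppose $G$ has directed spaghetti treewidth at most two, and let $B$ be a block of $G$. Since directed spaghetti treewidth does not increase when taking a subgraph, $B$ has directed spaghetti treewidth at most two. If $B$ is a single edge or an isolated vertex, it is a mamba by definition. Otherwise $B$ is $2$-connected, and by Corollary~\ref{cor:main6} (the equivalence of $(1)$ and $(3)$ there) $B$ has pathwidth two, hence is a mamba. So $(1)\Rightarrow(2)$.

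Next, $(2)\Rightarrow(3)$ is immediate from Corollary~\ref{cor:main6}: if $G$ had a minor isomorphic to $K_4$, $D_3$, or $S_3$, then since each of these is $2$-connected, that minor would be contained in a single block $B$ of $G$ (a $2$-connected minor of $G$ is a minor of one of its blocks, because contracting/deleting can only connect across cut vertices a structure that is not $2$-connected). But a mamba $B$, being of pathwidth at most two, has none of $K_4, D_3, S_3$ as a minor by Corollary~\ref{cor:main6}$(1)\Leftrightarrow(4)$, a contradiction. The one point requiring a short argument is the claim that a $2$-connected minor model of $G$ lives inside one block; this is a standard fact and I would state it as such (any minor of $G$ is a disjoint union, over the blocks, of minors of the blocks, glued along images of cut vertices, so a connected minor that does not pass through a cut vertex image nontrivially is a minor of a single block, and a $2$-connected minor cannot have a cut vertex).

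Finally, $(3)\Rightarrow(1)$. Assume $G$ has no minor isomorphic to $K_4, D_3, S_3$. It suffices to build a directed spaghetti tree decomposition of width at most two. As for Lemma~\ref{lem:sptwblock}, I would first reduce to blocks: I claim directed spaghetti treewidth of a graph equals the maximum over its blocks, i.e. an analogue of Lemma~\ref{lem:sptwblock} holds for $\dptw$. The forward direction is trivial; for the converse, one glues directed spaghetti tree decompositions of the blocks at a cut vertex $v$ by concatenating the directed paths $P(\mathcal{I}_i,v)$ into a single directed path --- here one must orient the glued edges consistently so that, for each $i$, the subpath coming from $\mathcal{I}_i$ remains a directed path and all of them together still form one directed path through the bags containing $v$; this is possible because a directed tree allows us to route the concatenation as a directed path, exactly as in the spaghetti case but now respecting edge directions. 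Given this reduction, each block of $G$ has no $K_4, D_3, S_3$ minor (minors of a block are minors of $G$), so by Corollary~\ref{cor:main6} each block has directed spaghetti treewidth at most two (being a mamba via $(4)\Rightarrow(2)$). Hence $G$ has directed spaghetti treewidth at most two.

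The main obstacle is the block-gluing step in $(3)\Rightarrow(1)$: verifying that directed spaghetti tree decompositions can be merged at a cut vertex while simultaneously preserving the "directed path" condition for the cut vertex itself and not destroying it for any other vertex. In the undirected spaghetti case (Lemma~\ref{lem:sptwblock}) one only needs the merged bags containing $v$ to form a path; here one additionally has to choose an orientation of the newly added tree edges, and check that each original directed subpath $P(\mathcal{I}_i,u)$ stays directed. The key observation making this work is that one may root or re-orient each $\mathcal{I}_i$ so that the unique end of $P(\mathcal{I}_i,v)$ chosen for gluing is compatible, and that a single global concatenation of the $P(\mathcal{I}_i,v)$ into one directed path can always be realized inside a directed tree. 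I would spell this out carefully, otherwise appeal to the same routine argument as in the spaghetti case with the remark that all added edges are oriented along the concatenating path.
```
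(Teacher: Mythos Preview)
Your proposal is correct and follows essentially the same route as the paper: establish a block-level analogue of Lemma~\ref{lem:sptwblock} for directed spaghetti treewidth, and then read off all equivalences from Corollary~\ref{cor:main6}. The paper dismisses the gluing step you flag as the ``main obstacle'' in a single sentence (``similarly in the proof of Lemma~\ref{lem:sptwblock}''), and indeed no reorientation is needed---you can simply concatenate the directed paths $P(\mathcal{I}_i,v)$ by adding edges from the sink of one to the source of the next, which automatically yields a directed path for $v$ while leaving every other $P(\mathcal{I}_i,u)$ untouched.
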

\begin{proof}
Similarly in the proof of Lemma~\ref{lem:sptwblock}, we can easily verify that $G$ has directed spaghetti treewidth at most two if and only if every block of it has directed spaghetti treewidth at most two.
So, all directions are easily obtained from Corollary~\ref{cor:main6}.
\end{proof}

\subsection{Characterizing Graphs of Special Treewidth Two}
\label{section:charsp2}
A result like Theorem~\ref{theorem:dstwblocks} does not hold for special treewidth 
two: we can have a graph with special treewidth at least three, where each block
has special treewidth at most two. For instance, the graphs $G_1$, $G_2$,
and $G_3$ in Figure~\ref{figure:forbidden} have special treewidth three 
but one can easily observe that
each block is a mamba.
Thus, for a graph to have special treewidth at most two, it is necessary but
not sufficient that each block is a mamba. 
An additional condition, expressing how the different mambas are attached to each other, is given below; adding this condition gives
a full characterization.

Head vertices
of mambas play a central role in the characterizations. 

\begin{definition}\label{def:headvertex}
Let $G=(V,E)$ be a mamba. A vertex $v\in V$ is a {\em head vertex} of $G$,
if there is a path decomposition $(T,\{B_x\}_{x\in V(T)})$ of $G$ having width at most two such that $T=p_1p_2 \cdots p_r$ and $v\in B_{p_1}$.
\end{definition}

We define \emph{mamba trees}, which are recursively constructed by attaching mambas at head vertices. 
We will show that mamba trees precisely characterize
the connected graphs of special treewidth at most two.
In the next section,
we characterize this class using forbidden minors.

\begin{definition}
\label{def:mambatree2}
The class of \emph{mamba trees} is the class of graphs recursively defined as follows.
\begin{itemize}
	\item Each mamba is a mamba tree.
	\item For each mamba tree $G$ and each mamba $M$, the graph obtained from a disjoint union of $G$ and $M$ by identifying a vertex of $G$ with a head vertex of $M$ is a mamba tree.
\end{itemize}
\end{definition}

\begin{THM}
\label{theorem:spctw2}
A graph has special treewidth at most two,
if and only if it is a disjoint union of mamba trees.
\end{THM}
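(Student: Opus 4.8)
The plan is to prove both directions separately, reducing everything to connected graphs since special treewidth, like the block-based parameters, is the maximum over connected components (a special tree decomposition of each component can be merged at a common root, or indeed the disjoint union of special tree decompositions is a special tree decomposition after rooting appropriately). So assume $G$ is connected; I will show $G$ is a mamba tree if and only if $\spctw(G)\le 2$.

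For the direction that every mamba tree has special treewidth at most two, I would argue by induction on the number of mambas used in the recursive construction of Definition~\ref{def:mambatree2}. The base case is a single mamba $M$: by Corollary~\ref{cor:main6}, a $2$-connected mamba has pathwidth two, hence special treewidth two, and a single edge or isolated vertex trivially has special treewidth at most two; moreover a path decomposition is a special tree decomposition once we root the underlying path at one end. The key strengthening needed to make the induction go through is to carry along the head-vertex data: I claim every mamba tree $G$ has, for each of its ``attachable'' vertices, a special tree decomposition of width at most two whose root bag contains that vertex. More precisely, when $G$ is built by identifying a vertex $v$ of a mamba tree $G'$ with a head vertex of a mamba $M$, take by induction a width-$2$ special tree decomposition $(T',\mathcal{B}')$ of $G'$ whose root bag contains $v$, and take a width-$2$ path decomposition $p_1\cdots p_r$ of $M$ with $v\in B_{p_1}$ (this exists precisely because $v$ is a head vertex). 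Root the path of $M$ at $p_r$ — so it becomes a directed path ending at $p_1$ — and then attach $p_1$ as a child of the root of $T'$. Condition (T3) for special decompositions is exactly that the bags containing any fixed vertex form a directed (rooted) path: for vertices of $G'$ this is inherited from $(T',\mathcal{B}')$; for vertices of $M$ other than $v$ it is inherited from the path decomposition of $M$; and for $v$ itself, the bags containing $v$ in $M$ form a prefix $p_1,\dots,p_j$ of the path which, after rooting at $p_r$, is a directed path terminating at $p_1$, and this concatenates with the directed path in $T'$ ending at the root, so altogether the $v$-bags form a single directed path. One also has to check that one can still reach a root bag containing any desired attachable vertex of $G$, which is immediate since the root is unchanged when attaching to $G'$, and when we need a root bag containing a vertex $w$ of the newly added mamba $M$, we re-root: every vertex of a mamba is either a head vertex or can be made one by reversing the path, and attaching the reversed version still works.

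For the converse, suppose $\spctw(G)\le 2$; I must show $G$ is a mamba tree. First, every block of $G$ is a mamba: a block has special treewidth at most two (taking a special tree decomposition of $G$ and intersecting all bags with the block's vertex set yields a special tree decomposition of the block), and by Corollary~\ref{cor:main6} a $2$-connected graph of special treewidth two has pathwidth two, i.e.\ is a mamba; blocks that are single edges or vertices are mambas by definition. The real content is to show the blocks are glued together respecting head vertices, i.e.\ that at every cut vertex $v$, each block $B$ containing $v$ attaches to $v$ as a head vertex of $B$. I would prove this by induction on the number of blocks, picking a ``leaf block'' $B$ — one containing exactly one cut vertex $v$ of $G$ — and letting $G'$ be the union of all other blocks (so $G = G'\cup B$ glued at $v$). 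Take a width-$2$ special tree decomposition $(T,\mathcal{B})$ of $G$. Since $v$ is a cut vertex separating $B-v$ from $G'-v$, and the bags containing any vertex of $B-v$ together with the bags containing any vertex of $G'-v$ are forced by (T2)/(T3) to lie in separate branches joined through the directed path $P(\mathcal{I},v)$, one can split $T$ at $P(\mathcal{I},v)$ to extract a special tree decomposition of $B$ whose ``entry point'' into $P(\mathcal{I},v)$ gives a bag containing $v$; the subtlety is orientation — because the $v$-bags form a \emph{directed} path, the part of this path lying in the $B$-side is a directed subpath, and its endpoint closest to $B$'s interior, re-rooted, exhibits a rooted path decomposition of $B$ (it is a path decomposition because $B$ is $2$-connected and the induced decomposition must itself be path-like along $P(\mathcal{I},v)\cap P(\mathcal{I},B)$, extended into $B$) with $v$ in the first bag. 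That is exactly the statement that $v$ is a head vertex of $B$. Deleting $B-v$ from $G$ leaves $G'$, which still has special treewidth at most two, has fewer blocks, and is connected, so by induction $G'$ is a mamba tree; then $G$ is obtained from the mamba tree $G'$ by attaching the mamba $B$ at its head vertex $v$, hence $G$ is a mamba tree.

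The main obstacle I expect is the orientation bookkeeping in the converse direction: extracting from a \emph{rooted}/directed special tree decomposition of $G$ a sub-decomposition of a leaf block $B$ in which $v$ sits in the \emph{first} bag. It is not enough to know $v$'s bags form a path in the forget-direction sense; one must verify that when restricted to $B$ the decomposition is genuinely path-like (not merely tree-like) — this uses $2$-connectedness of $B$ together with the fact that $B$ is a mamba, hence has pathwidth two, so any special tree decomposition of $B$ of width two that we obtain, after pruning, can be massaged into a path decomposition with the correct endpoint — and that the directedness inherited from $T$ is consistent with making $v$ the head. I would isolate this as a separate lemma, essentially: if $B$ is a $2$-connected mamba appearing as a leaf block of a graph $G$ with $\spctw(G)\le 2$, attached at cut vertex $v$, then $v$ is a head vertex of $B$. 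Granting that lemma and Corollary~\ref{cor:main6}, both directions close by the inductions sketched above.
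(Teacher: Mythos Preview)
Your forward direction has a genuine error in the gluing. You place $v$ in the root bag of $T'$ and hang $p_1$ as a child of that root, but then the bags containing $v$ form a path with a \emph{peak} at the root: the portion $q_m\to\cdots\to q_1=\text{root}$ inside $T'$ and the portion $p_j\to\cdots\to p_1\to\text{root}$ from $M$ both point toward the root, so together they are not a single directed path. The fix is what the paper does, and it needs no strengthened induction hypothesis at all: take any width-$2$ special tree decomposition of $G'$, locate the \emph{lowest} bag containing $v$, and attach $p_1$ as a child there. Then $v$'s directed path in $T'$ is extended downward by $p_1,\ldots,p_j$ and remains directed. (Your side remark that ``every vertex of a mamba is either a head vertex or can be made one by reversing the path'' is also false---that is precisely the content of Lemma~\ref{lem:specialblock}---but it becomes irrelevant once you attach at the bottom.)

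Your converse direction has a more serious gap: the proposed key lemma is false. Take any $2$-connected mamba $B$ possessing a non-head vertex $z$ (such vertices exist; again see Lemma~\ref{lem:specialblock}), and form $G$ by attaching a single pendant edge $zw$ at $z$. Then $G$ is a mamba tree---start from $B$ and attach the single-edge mamba $zw$ at \emph{its} head vertex $z$---so $\spctw(G)\le 2$. Yet $B$ is a leaf block of $G$ whose unique cut vertex $z$ is not a head vertex of $B$. So you cannot choose an arbitrary leaf block and expect its cut vertex to be a head. The paper's route is quite different: fix a width-$2$ special tree decomposition minimizing $\sum_x |B_x|$, and prove four local claims (each forcing an edge of $G$) showing that every maximal rooted path of bags with consecutive intersections of size two induces a $2$-connected subgraph, hence a mamba. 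Then peel off at the deepest bag whose intersection with its parent has size one: everything below it is such a maximal path, yielding a mamba whose top bag contains the cut vertex, which is therefore a head vertex by construction. The minimized decomposition selects the correct leaf block for you; a block-tree-first argument does not.
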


Let $\mathcal{I}$ be a special tree decomposition with a root bag $R$.
A bag $B_y$ is a \emph{descendant} of $B_x$ in $\mathcal{I}$, if $B_x$ belongs to
the path of  $\mathcal{I}$ linking $B_y$ to the root $R$. If $B_y$ is a descendant of $B_x$ and
they are neighbor bags, then $B_x$ is called a \emph{parent} of $B_y$, and $B_y$ is a \emph{child} of $B_x$.

\begin{PROP}
\label{prop:mambatree}
The special treewidth of a mamba tree is at most two.
\end{PROP}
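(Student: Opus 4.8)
The plan is to induct on the recursive construction of mamba trees given in Definition~\ref{def:mambatree2}, that is, on the number of mambas used to build the tree. For the base case, a mamba $G$ is, by definition, a $2$-connected graph of pathwidth two, a single edge, or an isolated vertex. In the first case $\spctw(G)\le\pw(G)=2$ by the inequality $\spctw(G)\le\pw(G)$ (this is also consistent with Corollary~\ref{cor:main6}); in the latter two cases a one- or two-bag path decomposition, rooted at one end, is already a special tree decomposition of width at most one.

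For the inductive step, suppose $G$ is obtained from a mamba tree $G'$ and a mamba $M$ by identifying a vertex $w\in V(G')$ with a head vertex $v\in V(M)$. By the induction hypothesis there is a special tree decomposition $\mathcal{I}'=(T',\{B_x\}_{x\in V(T')})$ of $G'$ of width at most two; let $R$ be its root. Since $v$ is a head vertex of $M$, by Definition~\ref{def:headvertex} there is a path decomposition $(P,\{C_y\}_{y\in V(P)})$ of $M$ of width at most two with $P=p_1p_2\cdots p_r$ and $v\in C_{p_1}$. Because the bags of $\mathcal{I}'$ containing $w$ induce a directed path in $T'$, there is a unique bag $B_z$ among them that is farthest from the root $R$. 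I would then build $T$ from the disjoint union of $T'$ and $P$ by adding the edge $zp_1$ and keeping $R$ as the root, so that in $T$ the parent of $p_1$ is $z$ and the parent of $p_{i+1}$ is $p_i$ for $1\le i\le r-1$. Assign the bag $B_x$ to each $x\in V(T')$ and the bag $C_y$ to each $y\in V(P)$, identifying $w$ with $v$ throughout.

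It then remains to verify that $(T,\{B_x\}_{x\in V(T)})$ is a special tree decomposition of $G$ of width at most two. Conditions (T1) and (T2) are immediate: every vertex and every edge of $G$ lies in $V(G')$ or in $V(M)$ and is covered by the corresponding decomposition, and since $G$ arises by a single vertex identification there is no edge of $G$ between $V(G')\setminus\{w\}$ and $V(M)\setminus\{v\}$. For the directed-path condition (which in particular yields (T3)): a vertex of $V(G')\setminus\{w\}$ keeps exactly the bags it occupied in $\mathcal{I}'$, a directed path in $T'\subseteq T$; a vertex of $V(M)\setminus\{v\}$ occupies a subpath of $p_1\cdots p_r$, which is a directed path in the rooted tree $T$. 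For the shared vertex $w=v$, its bags in $\mathcal{I}'$ form a directed path whose lowest vertex is $z$, and, since $v\in C_{p_1}$, its bags in $M$ form a prefix $p_1,\dots,p_t$ of $P$; joined through the new edge $zp_1$, these give the single directed path $p_t\to\cdots\to p_1\to z\to\cdots\to R$, the last segment being the directed path of $\mathcal{I}'$ from $z$ up to $R$. Finally every bag still has at most three vertices, so the width is at most two. The step requiring the most care is exactly this last verification for the shared vertex: it is where both the head-vertex hypothesis (which forces $v$ into the end bag $C_{p_1}$, hence into a prefix of $P$) and the choice to attach $M$ at the deepest bag $B_z$ containing $w$ are essential.
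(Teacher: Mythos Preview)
Your proof is correct and follows essentially the same approach as the paper: induction on the recursive construction of a mamba tree, attaching the path decomposition of $M$ (with $v$ in its first bag) as a child of the deepest bag of $\mathcal{I}'$ containing $w$. One small slip in your verification: the directed path of bags containing $w$ in $\mathcal{I}'$ need not extend all the way to the root $R$, only up to some ancestor of $z$; but this does not affect the argument, since all that is needed is that the concatenated set of bags $p_t,\dots,p_1,z,\dots$ still forms a directed path in the rooted tree $T$.
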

\begin{proof}
Let $G=(V,E)$ be a mamba tree. If $G$ is a mamba, 
then by Corollary~\ref{cor:main6}, 
$G$ has special treewidth at most two.

Otherwise, we know from Definition~\ref{def:mambatree2} that $G$ is constructed by taking a mamba tree $G'$ and a mamba $M$, and identifying a vertex in $G'$ with a head vertex in $M$. Let $v$ be this vertex.
From Definition~\ref{def:headvertex} we know that since $v$ is a head vertex, there is a path decomposition $\mathcal{I}^v=(T^v, \{B^v_x\}_{x\in V(T^v)})$ of $M$ of width at most two such that $v$ is in the first bag $B_1$.
Now iteratively 
we get a special tree decomposition $\mathcal{I}$ of $G$ of width at most two from 
a special tree decomposition $\mathcal{I}'=(T', \{B'_x\}_{x\in V(T')})$ of $G'$ by attaching $T^v$ to $T'$ and making $B_1$ the child of the lowest bag of $P(\mathcal{I}', v)$ in $\mathcal{I}'$. So, we conclude that every mamba tree has special treewidth at most two.
\end{proof}

\begin{PROP}\label{prop:spctw2mamba}
Every connected graph having special treewidth at most two is a mamba tree.
\end{PROP}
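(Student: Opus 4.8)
The plan is to take a connected graph $G$ with a special tree decomposition $\mathcal{I}=(T,\{B_x\}_{x\in V(T)})$ of width at most two, with root bag $R$, and extract from it a mamba-tree structure by induction on the number of blocks (equivalently, on the number of cut vertices) of $G$. The base case is when $G$ is $2$-connected or a single edge or an isolated vertex: then $G$ is a mamba by Corollary~\ref{cor:main6}, so we are done. For the inductive step, we fix a leaf block $B$ of the block-cut tree of $G$, that is, a block containing exactly one cut vertex $v$ of $G$; write $G = G' \cup B$ where $G'$ is the union of all other blocks (so $G'$ is connected, $V(G')\cap V(B)=\{v\}$, and $G'$ has one fewer cut vertex than $G$). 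Since special treewidth does not increase when taking subgraphs, $G'$ and $B$ both have special treewidth at most two; the former is a mamba tree by the induction hypothesis, and the latter is a mamba by Corollary~\ref{cor:main6}. To conclude that $G$ is a mamba tree via Definition~\ref{def:mambatree2}, it remains to show that $v$ is a \emph{head vertex} of the mamba $B$.

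The key step, therefore, is to prove: if $B$ is a $2$-connected graph (or a single edge) of special treewidth at most two, $v\in V(B)$, and $B$ is a leaf block attached to the rest of a connected graph $G$ at $v$, then $v$ is a head vertex of $B$, i.e.\ $B$ has a width-$\le 2$ path decomposition whose first bag contains $v$. The idea is to use the special tree decomposition $\mathcal{I}$ of $G$ itself. Because $B$ is a subgraph of $G$, the restriction of $\mathcal{I}$ to $V(B)$ gives a special tree decomposition of $B$ of width at most two; after discarding components of $T$ that meet no bag intersecting $V(B)$ and contracting edges between identical bags, we may take $T$ connected and minimal. Now I claim that $P(\mathcal{I}, V(B))$, the set of nodes whose bags meet $V(B)$, actually forms a \emph{path} in $T$, not merely a subtree. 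Suppose not: then $P(\mathcal{I},V(B))$ has a node $x$ of degree at least three in the subtree it spans, so there are three nodes $y_1,y_2,y_3$ in distinct components of $T - x$ with $B_{y_i}\cap V(B)\neq\emptyset$; by the subtree/directed-path structure of the special decomposition, one obtains three internally disjoint paths inside $B$ through $B_x$, and together with $2$-connectedness this forces $B$ to contain a $D_3$-, $S_3$- or $K_4$-minor (this is exactly the kind of argument used in Lemmas~\ref{lem:h3minor} and~\ref{lem:twocompos} and in Proposition~\ref{prop:nominorsimple}), contradicting Corollary~\ref{cor:main6}. Hence the bags of $B$ are already linearly arranged.

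It then remains to arrange this linear order so that $v$ sits in the first bag. Here we exploit that $B$ is a \emph{leaf} block attached at $v$: the bags of $\mathcal{I}$ containing $v$ form a directed path (rooted path) in $T$, and since $v$ is the only vertex of $B$ shared with $G'$, while every other vertex of $B$ appears only in bags whose $V(B)$-part is nonempty, the node of $P(\mathcal{I},V(B))$ closest to the root is a bag containing $v$ — intuitively, the path $P(\mathcal{I},v)$ enters the "region" of $B$ from above, so the topmost bag of $B$'s linear region already contains $v$. Reading the path decomposition of $B$ starting from that topmost bag gives a width-$\le 2$ path decomposition of $B$ with $v$ in $B_{p_1}$, so $v$ is a head vertex of $B$ by Definition~\ref{def:headvertex}. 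Applying the construction rule of Definition~\ref{def:mambatree2} to the mamba tree $G'$ and the mamba $B$ identified at the head vertex $v$, we conclude that $G$ is a mamba tree, completing the induction. The main obstacle I anticipate is making the "leaf block $\Rightarrow$ $v$ is a head vertex" step fully rigorous: one must be careful about minimality of $T$ and about the precise sense in which the root/direction of the special decomposition forces the first bag of $B$'s linear region to contain $v$, rather than merely some bag of that region; handling the degenerate cases (where $B$ is a single edge, or where $P(\mathcal{I},V(B))$ is a single node) should be routine once the generic argument is in place.
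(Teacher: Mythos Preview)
Your inductive scheme has a real gap at the step ``leaf block $\Rightarrow$ $v$ is a head vertex of $B$'': this implication is simply false for an \emph{arbitrary} leaf block. Take $B$ to be the graph $H_2$ of Figure~\ref{figure:component} with $v$ the marked vertex (so $v$ is \emph{not} a head vertex of $B$, by Lemma~\ref{lem:specialblock}), and let $G'$ be a single edge $vw$. Then $G=B\cup G'$ is a mamba tree (start from $B$ and attach the edge, which has $v$ as a head vertex), hence $\spctw(G)\le 2$. Both $B$ and $G'$ are leaf blocks; if your procedure happens to select $B$, the cut vertex $v$ is not a head vertex of $B$ and the induction stalls. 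Concretely, in the natural special tree decomposition of $G$ (a path decomposition of $B$ with a pendant bag $\{v,w\}$), the set $P(\mathcal{I},V(B))$ is the whole tree, which is not a path, and the bag closest to the root does not contain $v$. So both of your key structural claims fail for this choice.

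The paper's proof avoids this by never touching the block tree. It fixes a special tree decomposition minimizing $\sum_x|B_x|$, and uses four local claims to show that any maximal rooted path along which consecutive bags share two vertices induces a $2$-connected subgraph (a mamba). The induction then peels off a mamba from the \emph{bottom} of $T$: one takes a deepest node whose intersection with its parent has size one, and the union of that bag with all its descendants is a mamba whose top bag contains the single shared vertex $v$, so $v$ is automatically a head vertex. The crucial difference is that the mamba to remove is dictated by the decomposition tree, not by the block tree; this is exactly what guarantees the head-vertex condition. Your approach could be salvaged only if you first prove that \emph{some} leaf block is attached at a head vertex, but establishing that essentially requires the decomposition-side argument the paper gives.
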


\begin{proof}
Let $G=(V,E)$ be a connected graph of special treewidth at most two.
We prove by induction on the number of vertices in $G$.
If $\abs{V}\le 3$, then this is always true.
We may assume that $\abs{V}\ge 4$.

We choose a special tree decomposition $\mathcal{I}=(T, \{B_x\}_{x\in V(T)})$ of $G$ having width at most two such that $\sum_{x\in V(T)} \abs{B_x}$ is minimum. Note that $T$ is a rooted tree and
if $B$ is a bag of $\mathcal{I}$, then 
$B$ has at most one child $B'$ such that $\abs{B\cap B'}\ge 2$, otherwise, there must be a vertex of $G$ where the bags containing it do not form a rooted path.

We choose a maximal rooted path $P=B_1-B_2-\cdots-B_n$ in $T$ such that 
for all $1\le i\le n-1$, $\abs{B_i\cap B_{i+1}}=2$ and $B_i$ is a child of $B_{i+1}$ in $T$.
We show that $G[\bigcup_{1\le i\le n}B_i]$ is $2$-connected, and so, it is a mamba.
To show this, we analyze some cases forcing edges in the graph.

\begin{claim}\label{claim:smaller}
Let $t\in V(T)$ and let $B'$ be the parent of $B_t$ such that $B_t\cap B'=\{v_1, v_2\}$ and $B_t- B'=\{w\}$. If there is no child $B''$ of $B_t$ such that $\abs{B_t\cap B''}=2$, then $w$ is adjacent to both $v_1$ and $v_2$.
\end{claim}
\begin{proof}
Suppose $v_i$ is not adjacent to $w$ in $G$ for some $i\in \{1,2\}$ and let $v_j$ be the vertex in $B_t\cap B'$ other than $v_i$.
If $B_t$ has no child containing $v_i$, we can simply remove $v_i$ from $B_t$.
In the below of $B_t$ in $T$, if there exists a component $T'$ of $T- t$ containing a bag with $v_i$, 
then we cut the branch $T'$ from $T$, and attach this on $B'$, and remove $v_i$ from $B_t$.
Since $T'$ has no bag containing $w$ or $v_j$, the modified decomposition is a special tree decomposition and $\sum_{x\in V(T)} \abs{B_x}$ is decreased by one, contradiction.
\end{proof}

\begin{claim}\label{claim:smaller2}
Let $B_t$ be a bag of $\mathcal{I}$ and let $B'$ be a child of $B_t$ such that $B_t\cap B'=\{v_1, v_2\}$ and $B_t- B'=\{w\}$.
If $B_t$ is a non-root bag with the parent $B''$ such that $\abs{B_t\cap B''}=1$, then $w$ is adjacent to both $v_1$ and $v_2$.
\end{claim}
\begin{proof}
Suppose $v_i$ is not adjacent to $w$ in $G$ for some $i\in \{1,2\}$ and let $v_j$ be the vertex in $B_t\cap B'$ other than $v_i$.
 If $B_t\cap B''\neq \{v_i\}$, then we can remove $v_i$ from the bag $B_t$.
Thus, we may assume that $B_t\cap B''= \{v_i\}$.
Let $L$ be the bag of $P(\mathcal{I}, v_j)$ where the distance from $L$ to the root is maximum.
If $B_t$ has a child $B_z$ containing $w$, let $T_z$ be the subtree of $T-t$ containing $z$.

Let $\mathcal{I'}=(T', \{B_x\}_{x\in V(T')})$ be a decomposition obtained by removing $B_t$, and connecting $B'$ and $B''$, and adding a new bag $\{w, v_j\}$ on the bag $L$, and attaching $T_z$ on the new bag so that $B_z$ is a child of the bag $\{w, v_j\}$, if exists. 
The resulting decomposition is a special tree decomposition and $\sum_{x\in V(T)} \abs{B_x}$ is decreased by one, which leads a contradiction.
\end{proof}

\begin{claim}\label{claim:smaller3}
Let $B_t$ be a non-root bag of $\mathcal{I}$ with a child $B'$ and the parent $B''$ such that 
$B_t\cap B'=B_t\cap B''=\{v_1, v_2\}$. 
If  $B_t- B'=\{w\}$,
then $w$ is adjacent to both $v_1$ and $v_2$.
\end{claim}
\begin{proof}
Suppose $v_i$ is not adjacent to $w$ for some $i\in \{1,2\}$ and let $v_j$ be the vertex of $\{v_1, v_2\}$ other than $v_i$.
Similarly in Claim~\ref{claim:smaller2},
we first remove the bag $B_t$, and connect $B'$ and $B''$,
and add a new bag $\{w, v_j\}$ on the below of the path $P(\mathcal{I}, v_j)$, and  
if there is a component of $T-t$ containing a bag with the vertex $w$, then cut and attach it on the new bag $\{w,v_j\}$.
The resulting decomposition is a special tree decomposition and $\sum_{x\in V(T)} \abs{B_x}$ is decreased by one, which leads a contradiction.
\end{proof}

\begin{claim}\label{claim:smaller4}
Let $B$ be a non-root bag of $\mathcal{I}$ with a child $B'$ and the parent $B''$ such that 
$B\cap B'=\{w, v_1\}$ and $B\cap B''=\{w, v_2\}$. 
Then $v_1$ is adjacent to $v_2$.
\end{claim}
\begin{proof}
Suppose $v_1$ is not adjacent to $v_2$. 
Note that $v_1$ is contained in neither $B''$ nor any child of $B$ other than $B'$. So, we can remove $v_1$ from the bag $B$ and reduce $\sum_{x\in V(T)} \abs{B_x}$.
It contradicts to the minimality of $\sum_{x\in V(T)} \abs{B_x}$.
\end{proof}

Now we show that $G[\bigcup_{1\le i\le n}B_i]$ is $2$-connected.
Since $\sum_{x\in V(T)} \abs{B_x}$ is minimum,
$B_1-B_2\neq\emptyset$ and $B_n-B_{n-1}\neq\emptyset$.
If $B_1- B_2=\{x\}$,
then by Claim~\ref{claim:smaller}, $x$ is adjacent to both vertices in $B_1\cap B_2$.
If $B_{n}- B_{n-1}=\{x\}$,
then by Claim~\ref{claim:smaller2}, $x$ is adjacent to both vertices in $B_{n-1}\cap B_n$.

Suppose $B_{i-1}$, $B_i$, $B_{i+1}$ are three consecutive bags in $P$. We have two cases.
If $B_{i-1}\cap B_i=B_i\cap B_{i+1}$, 
then by Claim~\ref{claim:smaller3}, the vertex $w$ in $B_i- B_{i-1}$ is adjacent to both vertices in $B_{i-1}\cap B_i$.
Suppose $B_{i-1}\cap B_i\neq B_i\cap B_{i+1}$.
Let $B_{i-1}- B_i =\{y\}$ and $B_{i+1}- B_i =\{z\}$. 
In this case, by Claim~\ref{claim:smaller4}, $y$ is adjacent to $z$ in $G$.
From these analysis, it is easy to verify that 
$G[\bigcup_{1\le i\le n}B_i]$ is $2$-connected, and therefore
$G[\bigcup_{1\le i\le n}B_i]$ is a mamba.

Now we show that $G$ is a mamba tree.

We may assume that there exists a non-root bag $B$ of $\mathcal{I}$ having the parent $B'$ such that $\abs{B\cap B'}=1$, otherwise $G$ is a mamba.
We choose such a bag $B$ so that the distance from $B$ to the root is maximum and assume that $B\cap B'=\{v\}$.
Let $P$ be the union of $B$ and all descendants of $B$.
If $\abs{P}\le 3$, then $G[P]$ consists of either one or two blocks of size two, or a triangle. 
If $\abs{P}\ge 4$, then
$G[P]$ is a mamba and since $v\in B$, $v$ is a head vertex of this mamba.
By the induction hypothesis, $G[(V- P) \cup \{v\}]$ is a mamba tree. In all cases, we conclude that $G$ is a mamba tree.
\end{proof}

\begin{proof}[Proof of Theorem~\ref{theorem:spctw2}]
As the special treewidth of a graph is the maximum of the special 
treewidth of its connected components, by Proposition~\ref{prop:mambatree} it
follows that any disjoint union of mamba trees has special treewidth at most two.
If a graph has special treewidth at most two, then
by Proposition~\ref{prop:spctw2mamba}, it is a disjoint union of mamba trees. 
\end{proof}

\subsection{The Minor Obstruction Set for Special Treewidth Two}
\label{section:obstructionspecial}

This section is devoted to the proof of Theorem~\ref{theorem:obstruction}, given below.

\begin{THM}\label{theorem:obstruction}
A graph has special treewidth at most two if and only if it has no minor isomorphic to $K_4, S_3, D_3, G_1, G_2$, or $G_3$.
\end{THM}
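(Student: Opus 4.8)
The plan is to combine the structural characterization from Theorem~\ref{theorem:spctw2} (a graph has special treewidth at most two iff it is a disjoint union of mamba trees) with the obstruction set for $2$-connected pathwidth two graphs ($K_4, D_3, S_3$) from Corollary~\ref{cor:main6}, and then identify exactly which non-$2$-connected obstructions arise from the extra gluing constraint (attaching at head vertices). First I would verify the easy direction: each of $K_4, S_3, D_3, G_1, G_2, G_3$ has special treewidth at least three. For $K_4$ this follows from $\tw(K_4)=3$; for $D_3, S_3$ it follows from Lemma~\ref{lem:h3minor}, Lemma~\ref{lem:sctws3} together with the inequalities $\sptw\le\spctw$ and $\sctw\le\spctw$; for $G_1, G_2, G_3$ it was already noted in the text (Section~\ref{section:charsp2}) that they have special treewidth three while each block is a mamba, so I would spell that observation out as a short lemma, exhibiting for each $G_i$ the structural obstruction to being a mamba tree (essentially, a mamba block attached at a non-head vertex, or a branching pattern that no rooted-path intersection model can realize). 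Since special treewidth is minor-monotone on this class is exactly what we are trying to prove, for the easy direction I instead argue directly that these six graphs are themselves not of special treewidth $\le 2$, which is enough: if $G$ has special treewidth $\le 2$ then by Theorem~\ref{theorem:spctw2} it is a disjoint union of mamba trees, and a standard check shows mamba trees (hence their disjoint unions) are minor-closed, so no minor of $G$ can be one of the six forbidden graphs.

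For the hard direction, suppose $G$ has no minor isomorphic to $K_4, S_3, D_3, G_1, G_2, G_3$; I must show $G$ is a disjoint union of mamba trees. It suffices to treat a connected $G$. Since $G$ has no $K_4, D_3, S_3$ minor, by Theorem~\ref{theorem:dstwblocks} (or directly by Corollary~\ref{cor:main6} applied blockwise) every block of $G$ is a mamba. So $G$ is built from mambas glued along cut vertices in a block-tree fashion, and the only thing that can fail is that some mamba $M$ is attached to the rest of $G$ at a vertex of $M$ that is \emph{not} a head vertex of $M$ — or more subtly, that several attachments at $M$ cannot be simultaneously realized by a single rooted-path model. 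The core of the proof is a local-to-global argument on the block-cut tree: I would root the block-cut tree and prove, by induction, that $G$ is a mamba tree unless it contains one of $G_1, G_2, G_3$ as a minor. The key combinatorial lemma is a characterization of which vertices of a mamba $M$ are head vertices (in terms of its cycle-path model $(C, E)$ from Definition~\ref{def:babetteCP}): $v$ is a head vertex iff $v$ lies on the "end" cycle $C_1$ (or $C_p$) in a suitable position, and dually the non-head vertices are those buried in the interior of the path of cycles or in the "wrong" part of an end cycle. I would show that if a mamba $M$ has an attachment at a non-head vertex $v$ (meaning $G-v$ restricted appropriately still connects both sides nontrivially on each "side" of $v$ within $M$), then that forced configuration yields a subdivision of one of $G_1, G_2, G_3$, and conversely a mamba together with attachments only at head vertices can be assembled into a special tree decomposition exactly as in the proof of Proposition~\ref{prop:mambatree}.

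The main obstacle I expect is the case analysis establishing that "a mamba attached at a non-head vertex (or with an unrealizable multiset of attachment points) contains one of $G_1, G_2, G_3$ as a minor." This requires understanding precisely, via the cycle-path model, where head vertices sit in a $2$-connected pathwidth-two graph, and then checking that each way of attaching a pendant edge (which after minor operations is all we need, since any nontrivial attached piece contains a pendant edge at the cut vertex) at a non-head position produces a $D_3$-like triple branching \emph{rooted} obstruction — which is exactly what $G_1, G_2, G_3$ encode. Concretely I would (i) catalog head vertices of a path of cycles: on each end cycle $C_1$, the head vertices are the two endpoints of the shared edge $e_1$ and vertices reachable from them along $C_1$ without passing "over the top", formalized through the path decomposition; (ii) observe that a non-head vertex $v$ of a mamba $M$ has, inside $M$, two internally disjoint $v$-avoiding... rather, two "sides" each containing an edge not incident to $v$, so attaching any extra edge at $v$ creates a rooted obstruction because the rooted-path condition at $v$ cannot be met; (iii) translate that rooted obstruction into the explicit minors $G_1, G_2, G_3$ by contracting. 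Once this local lemma is in hand, the induction on the block-cut tree closes: peel off a leaf block $M$, which is a mamba attached at a single cut vertex $v$; if $v$ is a head vertex of $M$, remove $M-v$ and apply induction; if not, extract the forbidden minor — contradiction. This completes the proof that $G$ is a mamba tree, and hence by Theorem~\ref{theorem:spctw2} that $\spctw(G)\le 2$.
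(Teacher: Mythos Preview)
Your overall strategy matches the paper's almost exactly: exclude $K_4, D_3, S_3$ so that every block is a mamba (Corollary~\ref{cor:main6}), then work on the block tree, with the key local lemma that a non-head vertex $z$ of a $2$-connected mamba $B$ forces the pair $(B,z)$ to contain a rooted minor isomorphic to one of two fixed pairs $(H_1,v)$, $(H_2,v)$. This is precisely the paper's Lemma~\ref{lem:specialblock}; your plan to derive it from a head-vertex characterisation via the cycle-path model is reasonable (the paper argues the minor consequence directly without first characterising head vertices, but the content is the same).

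The one genuine gap is your closing induction step. You write: if a leaf block $M$ is attached at a non-head vertex $v$, ``extract the forbidden minor --- contradiction.'' A single bad leaf does not suffice. Each of $G_1, G_2, G_3$ is assembled from \emph{two} pieces of type $(H_i,v)$ with their marked vertices joined (hence three graphs, one per unordered pair from $\{H_1,H_2\}$). A lone $(H_i,v)$ with only a pendant edge at $v$ is a proper minor of some $G_j$ and so has special treewidth at most two; indeed it \emph{is} a mamba tree, because the recursive definition attaches the new mamba at a head vertex \emph{of the new mamba}, and both endpoints of a single edge are head vertices. Thus encountering one leaf block with a non-head attachment yields no contradiction --- the whole graph may still be a mamba tree under a different rooting of its block tree. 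The paper handles this in the proof of Lemma~\ref{lem:minorcharsctw2} by first arguing that in a minimal counterexample \emph{every} leaf block of the block tree is attached at a non-head vertex (otherwise peel it off and contradict minimality); since the block tree has at least two leaves, one then takes two bad leaf blocks $B_1, B_2$ with cut vertices $z_1, z_2$, obtains rooted minors $(H_{i_1},v)$ and $(H_{i_2},v)$ from each via Lemma~\ref{lem:specialblock}, and uses the $z_1$--$z_2$ path in $G$ to glue them into one of $G_1, G_2, G_3$. With this two-leaf adjustment your argument is complete and coincides with the paper's.
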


In Figure~\ref{figure:forbidden}, the graphs $G_1, G_2, G_3$ in the obstruction set are displayed.

From our structural characterization of graphs of special treewidth at most two of the previous sections,
we can easily check that the class is minor closed.

Also, Proposition~\ref{prop:spctw2mamba} immediately follows that every graph of $\{G_1, G_2, G_3\}$ has special treewidth at least three, and
a tedious case analysis shows that each proper minor of a graph in $\{K_4, S_3, D_3, G_1,G_2,G_3\}$ has special treewidth at most two.
So $\{K_4, S_3, D_3, G_1,G_2,G_3\}$ is a subset of the obstruction set for the class of graphs of special treewidth at most two.
Thus, Theorem~\ref{theorem:obstruction} follows from the next lemma.

\begin{LEM}\label{lem:minorcharsctw2}
If a graph $G=(V,E)$ contains no minor isomorphic to $K_4$, $S_3$, $D_3$, $G_1$, $G_2$ or $G_3$, then
the special treewidth of $G$ is at most two.
\end{LEM}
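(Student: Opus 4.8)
The plan is to prove the contrapositive in structural form. By Theorem~\ref{theorem:spctw2} it suffices to show that $G$ is a disjoint union of mamba trees, and since the special treewidth of a graph is the maximum over its components while the hypothesis is inherited by subgraphs, I may assume $G$ is connected. As $G$ has no $K_4$, $D_3$, or $S_3$ minor, Corollary~\ref{cor:main6} gives that every $2$-connected block of $G$ has pathwidth two, so every block of $G$ is a mamba. Hence what remains is purely combinatorial: \emph{a connected graph all of whose blocks are mambas and which has no $G_1$, $G_2$, or $G_3$ minor is a mamba tree.}

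I would first restate ``mamba tree'' through the block--cut tree $\mathcal{T}$ of $G$. Reading Definition~\ref{def:mambatree2} both ways (build the mambas in a BFS order out of a chosen base mamba; conversely, a given construction orders the mambas) shows that $G$ is a mamba tree if and only if $\mathcal{T}$ admits a root block so that every non-root block $B$ is joined to its parent through a head vertex of $B$. Call $(B,c)$ \emph{bad} when $c$ is a cut vertex of $G$ lying in the block $B$ and $c$ is not a head vertex of $B$; a bad pair forbids the root from lying in the component of $\mathcal{T}-Bc$ on the $c$-side, and the complement of that forbidden set is a subtree of $\mathcal{T}$. By the Helly property of subtrees of a tree, $G$ is therefore a mamba tree unless there are two \emph{opposed} bad pairs $(B_1,c_1)$ and $(B_2,c_2)$, with $B_1\neq B_2$, such that the path from $B_1$ to $B_2$ in $\mathcal{T}$ leaves $B_1$ at $c_1$ and enters $B_2$ at $c_2$ (the case $c_1=c_2$ occurring when $B_1$ and $B_2$ share that cut vertex). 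So, assuming $G$ is not a mamba tree, I fix such an opposed pair, and the goal becomes to exhibit a minor of $G$ isomorphic to one of $G_1,G_2,G_3$.

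The heart of the argument is a structural description of head vertices: by Theorem~\ref{thm:babette2} the cell completion of a mamba has a cycle path model $(C_1,\dots,C_p)$, and every width-two path decomposition ``sweeps'' along it, so a head vertex must sit near one of the two ends, while a vertex buried in the interior of the model cannot occupy a terminal bag. Made precise, one shows that if $c$ is not a head vertex of a mamba $B$, then $B$ has a rooted minor $W$ — a fixed small mamba, intuitively $c$ flanked by two cycles — in which the image of $c$ is again a non-head vertex. Using that $B_1$ carries such a witness at $c_1$ and $B_2$ carries one at $c_2$, contracting every block of $G$ strictly between $B_1$ and $B_2$ down to an edge along the chain of cut vertices joining $c_1$ to $c_2$, and cleaning up, produces a bounded-size graph that contracts to one of $G_1$, $G_2$, $G_3$; the three cases come from whether $c_1=c_2$ and from which minimal witness occurs, and in the degenerate sub-cases the witness already forces $K_4$, $D_3$, or $S_3$, contradicting the hypothesis on $G$.

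The main obstacle is exactly the head-vertex analysis together with this last bookkeeping: one must determine precisely which vertices of a path of cycles can lie in a terminal bag of a width-two path decomposition (handling body cycles, simplicial triangles, and edge separators separately), identify the minimal witness mambas with a non-head vertex, and then verify that every minimal opposed-bad-pair configuration contracts to one of $G_1$, $G_2$, $G_3$ and that nothing outside this list can arise. This is the case analysis alluded to just before the lemma; the remaining ingredients — the block--cut reformulation, the Helly step, and Corollary~\ref{cor:main6} — are routine.
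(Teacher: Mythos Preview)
Your outline is correct and follows essentially the same route as the paper. The paper reduces to connected graphs whose blocks are mambas via Corollary~\ref{cor:main6}, finds two blocks attached at non-head vertices, invokes a key lemma (Lemma~\ref{lem:specialblock}) stating that a $2$-connected mamba with a marked non-head vertex has a rooted minor isomorphic to one of two specific pairs $(H_1,v)$ or $(H_2,v)$, and then combines the two witnesses along the connecting path to produce $G_1$, $G_2$, or $G_3$ --- exactly your plan.

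Two minor differences are worth noting. First, where you use the Helly property of subtrees to locate an opposed bad pair, the paper instead takes a minimal counterexample: then any \emph{leaf} block of the block tree attached at a head vertex could be peeled off, contradicting minimality, so every leaf block is bad and one simply takes two leaves. This is shorter and avoids the reformulation through rooted block-cut trees. Second, what you call ``the main obstacle'' --- the head-vertex analysis producing a small rooted-minor witness --- is isolated in the paper as Lemma~\ref{lem:specialblock}, proved separately via the cycle path model; the two explicit witnesses $(H_1,v)$ and $(H_2,v)$ (Figure~\ref{figure:component}) are precisely the building blocks of $G_1,G_2,G_3$, so the final assembly is immediate rather than a further case analysis.
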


To show Lemma~\ref{lem:minorcharsctw2}, we extend the standard minor notion to pairs of a graph and a vertex. For pairs $(G, v)$ and $(H,v)$, with $G =
(V,E)$, $H = (W, F)$, $v\in V$, $v\in W$, 
we say that $(H,v)$ is a \emph{minor} of $(G, v)$, if
we can obtain $H$ from $G$ by a series of the following operations: deletion of a vertex other than $v$, deletion of an edge, and contraction of an edge, such that
whenever we contract an edge with $v$ as an endpoint, the contracted vertex is named $v$. 
For pairs $(G, v)$ and $(H,w)$, we say $(G, v)$ and $(H,w)$ are \emph{isomorphic} if there is a graph isomorphism $f$ from $G$ to $H$ with $f(v) = w$.

\begin{figure}[t]
\begin{center}
\includegraphics[scale=0.25]{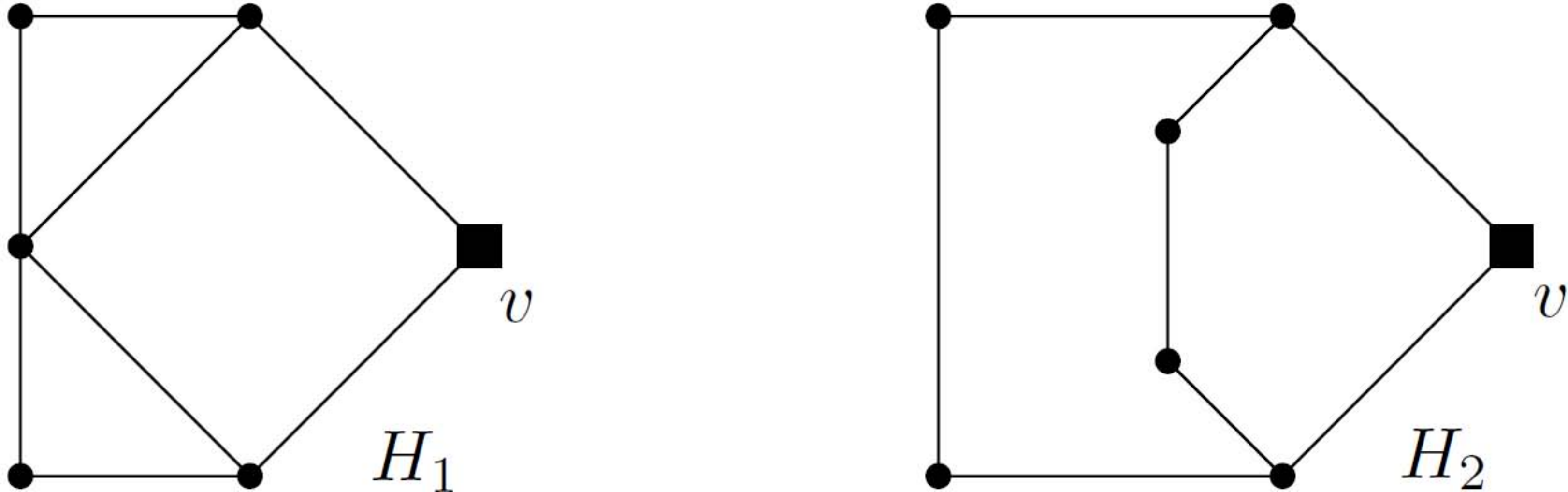}
\end{center}
\caption{Blocks of obstructions that are not 2-connected}
\label{figure:component}
\end{figure}

The following lemma is a key lemma to find a minor isomorphic to $G_1, G_2$ or $G_3$. The pairs $(H_1,v)$ and $(H_2, v)$ are depicted in Figure~\ref{figure:component}, with $v$ the marked vertex.

\begin{LEM}\label{lem:specialblock}
Let $B$ be a $2$-connected mamba and let $z$ be a vertex which is not a head vertex of $B$.
Then $(B,z)$ has a minor isomorphic to either $(H_1,v)$ or $(H_2, v)$.
\end{LEM}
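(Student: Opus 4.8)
The plan is to combine the path-of-cycles picture from Corollary~\ref{cor:main6} with a direct analysis of which vertices of a $2$-connected mamba can appear in a first bag. By Corollary~\ref{cor:main6}, $\widetilde B$ is a path of cycles; fix a cycle path model $(C_1,\dots,C_p)$ of it, where (after possibly reordering simplicial triangles that share an edge separator with a non-triangular cycle) whenever an edge separator $e$ lies in several chordless cycles, the at most two non-triangular ones among them sit at the two ends of the block of cycles through $e$, and the simplicial triangles on $e$ sit strictly inside that block. I would first isolate one positive and one negative combinatorial fact about width-two path decompositions of a single cycle $C$ of the model carrying a distinguished edge $xy$ on the side of the rest of $\widetilde B$: (positive) for every vertex $t$ of $C$ there is a width-two path decomposition of $C$ whose first bag contains $t$ and whose last bag contains both $x$ and $y$, obtained by placing $t$ with its two $C$-neighbours in the first bag and then ``unfolding'' $C$ around one of those neighbours; (negative) a cycle of length at least four cannot be ``threaded through'' along the same edge on both sides, i.e.\ it has no width-two path decomposition with both endpoints of a fixed edge in its first \emph{and} in its last bag. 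This negative fact is the interval-condition argument showing that two $4$-cycles sharing an edge $xy$ admit no width-two path decomposition in which the intervals of both $x$ and $y$ are prefixes.

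Using the positive fact, if $z$ belongs to $C_1$ or to $C_p$ one can start a width-two path decomposition of $B$ inside that end cycle with $z$ in the first bag and then sweep the remaining cycles as usual, so $z$ would be a head vertex, contrary to assumption; hence $z$ lies in a cycle $C_j$ with $1<j<p$, and is incident to neither $e_{j-1}$ nor $e_j$. I would then split into the two structurally different ways this can occur. If $z$ is the apex of a simplicial triangle $C_j=uvz$ on an edge separator $e=uv$: since $C_j$ is forced strictly inside its block, $e$ lies in at least two further cycles, at least two of which are non-triangular; applying Lemma~\ref{lem:avoidingcycle} on each side of $e$ inside $B$ (not merely inside $\widetilde B$) recovers, as a subgraph-plus-contraction of $B$ fixing $z$, two $4$-cycles together with the triangle $uvz$ glued along $uv$, which is $(H_1,v)$ with $v=z$. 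If instead $z$ is a degree-two vertex interior to an arc of a body cycle $C_j$ carrying two edge separators $e=uv$ and $f=pq$: using Lemma~\ref{lem:twocompos} and Lemma~\ref{lem:avoidingcycle} to produce a short cycle through $u,v$ on one side and through $p,q$ on the other, and contracting the two arcs of $C_j$ down, one recovers a $4$-cycle with a triangle glued on each of the two edges not meeting $z$, with $z$ the remaining vertex; that is $(H_2,v)$ with $v=z$. All other positions of a non-head vertex (an endpoint of an edge separator, or a degree-two vertex of an end cycle) I would rule out: in each such case the positive fact, possibly after a small rearrangement such as preloading a non-neighbour of $z$ into the first bag, already yields a width-two path decomposition with $z$ first.

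I expect the crux to be twofold. First, the negative fact and its consequence that a vertex of a genuinely interior cycle cannot be placed first: this needs careful bookkeeping of how the intervals of the cut-pair $u,v$ of an edge separator must behave, and is exactly where one uses that the offending cycles are not triangles. Second, the extraction step: the minors $(H_1,v)$ and $(H_2,v)$ must be pulled out of $B$ itself rather than out of $\widetilde B$, which forces us to replace fill-in edges of the cell completion by genuine internally vertex-disjoint paths of $B$ via Lemma~\ref{lem:twocompos} and Lemma~\ref{lem:avoidingcycle}, and then contract the rest of $B$ without destroying $z$ or creating the wrong graph; verifying that the outcome is exactly $H_1$ or $H_2$ with the correct marked vertex, and that no third minor-minimal pair is needed, is the routine-but-delicate part.
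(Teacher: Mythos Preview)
Your overall strategy matches the paper's: use the path-of-cycles structure of $\widetilde B$, show that certain positions force $z$ to be a head vertex, and in the remaining positions extract the required rooted minor. Your two extraction cases also line up with the paper's two cases, modulo having swapped the names $H_1$ and $H_2$ (which is understandable without access to Figure~\ref{figure:component}).

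However, there is a genuine gap. You assert that every endpoint of an edge separator is a head vertex and can therefore be ``ruled out''. This is false in general. Take four body cycles $C_1,C_2,C_3,C_4$ of length five in a row, with edge separators $e_1,e_2,e_3$, and let $z$ be an endpoint of the \emph{interior} separator $e_2$. Then $z$ lies on the spine but is not in $C_1$ or $C_4$, and one can check (for instance by exhibiting the rooted minor described below and noting that its marked vertex is not a head vertex) that $z$ is not a head vertex of $B$. Your Case~B, as stated, only covers vertices of degree two strictly interior to an arc of a body cycle, so it misses these interior separator endpoints; and your proposed ``rule out'' for them does not go through.

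The paper avoids this problem by phrasing its second case more broadly: it fixes two vertex-disjoint spine paths $P_1,P_2$ in $\widetilde B$ from the endpoints of $e_1$ to the endpoints of $e_r$, and takes $z$ to be any vertex of $(V(P_1)\cup V(P_2))\setminus\{u_1,v_1,u_r,v_r\}$. This automatically includes both arc-interior vertices and interior edge-separator endpoints. The extraction then contracts the other spine path to a single vertex and uses Lemma~\ref{lem:avoidingcycle} at each end to produce two short cycles, yielding the rooted minor you describe in your Case~B (a $4$-cycle through $z$ with a triangle glued on each of the two edges not meeting $z$). Extending your Case~B to all spine vertices in this way, rather than only to degree-two arc vertices, closes the gap; the ``rule out'' step for separator endpoints should simply be dropped.
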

\begin{proof}
Since $B$ is a $2$-connected mamba, by Theorem~\ref{thm:babette2}, $\widetilde{B}$ is a path of cycles. 
Let $(U, F)$ be a cycle path model of $\widetilde{B}$  with $U= (C_1,\ldots,C_p)$ and $F = (f_1,\ldots,f_{p-1})$. 
We may assume that 
\begin{enumerate}
\item $C_1=C$ if $\widetilde{B}$ has a chordless cycle $C$ of length at least four containing exactly one edge separator $f_1$, and
\item $C_p=C'$ if $\widetilde{B}$ has a chordless cycle $C'$ of length at least four containing exactly one edge separator $f_{p-1}$.
\end{enumerate}
By removing repeated edges from $F$, we can obtain a linear ordering $e_1, \ldots, e_r$ of all edge separators of $\widetilde{B}$. 
If $r=1$, then we can easily observe that either $(B,z)$ has a minor isomorphic to $(H_2, v)$, or there is a path decomposition of $\widetilde{B}$ having width two such that the first bag contains $z$. Therefore, we may assume that $r\ge 2$.

For each $i\in \{1,r\}$, let $\mathcal{C}_i$ be the set of all chordless cycles of $\widetilde{B}$ containing exactly one edge separator $e_i$.
We observe the following.
\begin{enumerate}
\item If $C_i$ is not a simplicial triangle, then $z\notin V(C_i)$ but $z$ can be a vertex of degree two in a simplicial triangle in $\mathcal{C}_i$.
\item If $C_i$ is a simplicial triangle, then $z\notin \bigcup_{C\in \mathcal{C}_i} V(C)$.
\end{enumerate}

We fix $i\in \{1,r\}$ and $e_i=u_iv_i$. 
If $C_i$ is a simplicial triangle, then all cycles in $\mathcal{C}_i$ are simplicial triangles. So, the second statement is true because 
if  $z\in \bigcup_{C\in \mathcal{C}_i} V(C)$, then 
it is not hard to construct a path decomposition of $\widetilde{B}$ where the first bag contains $z$.  
Also, in the first statement, if $z\in V(C_i)$, we can easily construct a path decomposition where the first bag contains $z$.

Suppose $C_i$ is not a simplicial triangle and $z$ is a vertex of degree two in a simplicial triangle in $\mathcal{C}_i$.
In this case, since $\widetilde{B}[V(B)-\{u_i,v_i\}]$ has two components of size at least two not containing $v$,
using Lemma~\ref{lem:twocompos},
$B$ has two internaly vertex-disjoint paths of length at least three from $u_i$ to $v_i$. 
Then $(B,z)$ has a minor isomorphic to $(H_2, v)$. 
Since there is no path decomposition of $H_2$ having width two where the first bag contains $v$, we conclude that $z$ is not a head vertex.  

Now we show that for each case, $(B,z)$ has a minor isomorphic to either $(H_1,v)$ or $(H_2, v)$.
We may assume that there exist two vertex-disjoint paths $P_1, P_2$ in $\widetilde{B}$ where $P_1$ links $u_1$ to $u_r$ and $P_2$ links $v_1$ to $v_r$.

We have two cases.

\smallskip
\noindent\emph{Case 1. $z$ is the vertex of degree two in a simplicial triangle of $\widetilde{B}$ having an edge separator $xy$.}
If $xy=e_1$ (or $xy=e_r$), then from the above observation, $C_1$ (or $C_r$) is a cycle of length at least four. 
So, in any cases, we have that 
$\widetilde{B}[V(B)-\{x,y\}]$ has two components of size at least two, which have the vertices of $C_1$ and $C_p$, respectively. 
By Lemma~\ref{lem:twocompos}, 
$B$ has two internally vertex-disjoint paths of length at least three from $x$ to $y$. Therefore, $(B, z)$ has a minor isomorphic to $(H_2, v)$.

\smallskip
\noindent\emph{Case 2. $z\in (V(P_1)\cup V(P_2))\setminus \{u_1,  u_r, v_1, v_r\}$.}
If we use a chordless cycle which contains $e_1$ and $e_2$, then by Lemma~\ref{lem:avoidingcycle}, $B$  has two internally vertex-disjoint paths from $u_1$ to $v_1$ such that they have no vertices of $(V(P_1)\cup V(P_2))\setminus \{u_1, v_1, u_r, v_r\}$. By the same reason, $B$ has two internally vertex-disjoint paths from $u_r$ to $v_r$ such that they have no vertices of $(V(P_1)\cup V(P_2))\setminus \{u_1, v_1, u_r, v_r\}$.

By symmetry, we may assume that $z\in V(P_1)\setminus \{u_1, u_r\}$. On the path $P_1$, the distance between $z$ and $u_1$ (or $u_r$) is at least one.
So, by contracting all edges of $P_2$, we get a minor isomorphic to $(H_1, v)$ together with the paths which we obtained before.
\end{proof}

\begin{proof}[Proof of Lemma~\ref{lem:minorcharsctw2}]
Suppose that the lemma does not hold. Let $G$ be a minimal counterexample such that no minor of $G$ is a counterexample.
Since the special treewidth of a graph is the maximum of the special 
treewidth of its connected components,
we may assume that $G$ is connected.
Since $G$ has no minor isomorphic to $K_4, S_3$, or $D_3$,  by Corollary~\ref{cor:main6}, each block of $G$ is a mamba.
We may also assume that $G$ has at least two blocks.

We use the well known fact that the blocks of a connected graph form a tree, called the block tree. We choose a block $B$ of $G$ corresponding to a leaf of the block tree, and let $B'$ be the block having an intersection $v$ with $B$.
If $v$ is a head vertex of $B$, 
then from the minimality of $G$, $G[(V-B)\cup\{v\}]$ is a mamba tree, 
and $G$ is again a mamba tree. 
So, we may assume that every block of $G$ corresponding to a leaf of the block tree is not attached to the remaining graph with a head vertex.

Since $G$ has at least two blocks,  $G$ has at least two blocks $B_1$ and $B_2$ corresponding to leaves of the block tree,  
	with cut vertices $z_1$ and $z_2$, respectively.
	By Lemma~\ref{lem:specialblock}, 
	each $(B_i, z_i)$ has a minor isomorphic to either $(H_1, v)$ or $(H_2, v)$. 
	Since there exists a path from $z_1$ to $z_2$ in $G$, 
	it implies that $G$ has a minor isomorphic to either $G_1, G_2$, or $G_3$, which is contradiction.
 \end{proof}

\section{Classes of Graphs having Width at most $k$ where $k\ge 3$}
\label{section:width3}
In this section, we show that for each $k\geq 3$, none of the classes of
graphs with special treewidth, spaghetti treewidth, directed spaghetti treewidth
and strongly chordal treewidth at most $k$ is closed under taking minors.

\begin{PROP}\label{proposition:closed3}
Let $k\geq 3$. Each of the following classes of graphs is not closed under taking
minors.
\begin{enumerate}
\item The graphs of special treewidth at most $k$.
\item The graphs of spaghetti treewidth at most $k$.
\item The graphs of directed spaghetti treewidth at most $k$.
\end{enumerate}
\end{PROP}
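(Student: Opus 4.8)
The plan is to prove non-closure by exhibiting, for each $k\ge 3$ and each $\mu\in\{\sptw,\dptw,\spctw\}$, an explicit graph $G$ with $\mu(G)\le k$ together with a minor $H$ of $G$ with $\mu(H)\ge k+1$. Before choosing $G$ I would record the constraint that dictates its shape: since $\mu\le\pw$ and pathwidth is minor‑monotone, $\mu(H)\ge k+1$ forces $\pw(G)\ge\pw(H)\ge k+1>k\ge\mu(G)$, so $G$ is necessarily a graph on which $\mu$ is \emph{strictly} smaller than its pathwidth. By Lemma~\ref{lemma:universal}, $G$ has no universal vertex, and — using that a $2$‑connected graph of pathwidth two already has special and directed spaghetti treewidth two (Corollary~\ref{cor:main6}), together with its natural higher‑width analogue — $G$ cannot be $2$‑connected in the $\spctw$ and $\dptw$ cases. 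Hence $G$ must genuinely profit from a \emph{branching} decomposition, and $H$ must be obtained by a contraction that renders that branching useless.

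For the construction I would generalize the width‑two obstructions. The building block is a $2$‑connected graph $B$ with $\pw(B)=k$ together with a vertex $z$ that is \emph{not} a head vertex of $B$; such $B$ exist for every $k\ge 2$ (e.g.\ take two internally vertex‑disjoint paths of length three between two vertices $u,w$, add a simplicial triangle $uwz$, and join a clique on $k-2$ fresh vertices to both $u$ and $w$; the cycle‑model theorems of Sections~\ref{section:spaghetti}–\ref{section:special} and Lemma~\ref{lemma:universal} give $\pw(B)=k$, and the argument used for $H_2$ in Lemma~\ref{lem:specialblock} shows $z$ cannot sit in the first bag of a width‑$k$ path decomposition). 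For $\spctw$ and $\dptw$ I would build $G$ as a ``width‑$k$ analogue of a mamba tree'': arrange bounded‑pathwidth $2$‑connected pieces in a tree‑like fashion, glued at vertices that a special (resp.\ directed spaghetti) tree decomposition can branch at, and arrange the gluing so that one particular contraction merges two copies of $B$ along copies of $z$ — producing a graph of the same type as $G_1,G_2,G_3$ but scaled to width $k$. For $\sptw$ one has more freedom, since spaghetti treewidth can be below pathwidth even for $2$‑connected graphs (a cycle with two simplicial triangles glued on each edge is a chain tree of cycles, hence has spaghetti treewidth two but pathwidth three); there I would instead take a $2$‑connected ``chain‑tree‑of‑cycles at scale $k$'' whose cell completion narrowly satisfies the chain condition, and a contraction that destroys it by creating an edge lying in three non‑triangle chordless cycles, i.e.\ a $D_3$‑like configuration blown up by a clique.

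The verification then splits into an upper bound $\mu(G)\le k$ and a lower bound $\mu(H)\ge k+1$. The upper bound should be routine: for $\spctw$ one concatenates the width‑$k$ (path) decompositions of the pieces and attaches the branches at the chosen head vertices exactly as in the proof of Proposition~\ref{prop:mambatree}; for $\sptw$ and $\dptw$ one uses Lemma~\ref{lem:sptwblock} and its directed counterpart, and where convenient one shortcuts the computation with Lemma~\ref{lemma:universal} applied to gadgets of the form $D_3+(\text{clique})$ or $S_3+(\text{clique})$, whose pathwidth — and hence spaghetti, directed spaghetti and special treewidth — is three plus the clique size. The lower bound is where the real content lies: I would show that in any special (resp.\ directed spaghetti, spaghetti) tree decomposition of $H$ of width $k$, restricting to one copy of $B$ forces a path decomposition whose extremal bag must contain the attachment vertex, contradicting that $z$ is not a head vertex of $B$ — a width‑$k$ generalization of the ``not a mamba tree'' argument of Proposition~\ref{prop:spctw2mamba} and of the lower‑bound reasoning in Lemmas~\ref{lem:h3minor} and \ref{lem:sctws3}.

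The main obstacle is precisely this lower bound: at width $k\ge 3$ we no longer have a clean forbidden‑configuration or ``tree of pieces'' characterization to lean on, so the proof that $H$ admits \emph{no} width‑$k$ decomposition of the relevant type has to be carried out by hand. This is the step around which the whole construction must be organized: the block $B$, its non‑head vertex $z$, and the contraction realizing $H$ have to be chosen so that (i) $G$ still decomposes at width $k$ by branching, (ii) the contraction strictly raises $\pw$ past $k$ (so minor‑monotonicity does not force $\mu(H)\le k$), and (iii) the decomposition of $H$ provably collapses to a path decomposition of a $B$‑copy rooted at $z$. Getting all three to hold simultaneously — in particular keeping pathwidth from dropping under the contraction while still destroying a ``head vertex'' — is the delicate point, and once a family of such $(G,H)$ is pinned down for the three parameters the proposition follows immediately; the analogous statement for strongly chordal treewidth, not part of this proposition, can be obtained either by the same device or by transfer from the directed spaghetti case.
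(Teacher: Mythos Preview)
Your proposal is not a proof but a plan, and the plan has a real gap exactly where you say it does: the lower bound $\mu(H)\ge k+1$. You correctly observe that $G$ must satisfy $\mu(G)<\pw(G)$, and you correctly invoke Lemma~\ref{lemma:universal} --- but only as a constraint on $G$ and as a shortcut for computing widths of gadgets. You miss that the same lemma can be used \emph{on the minor $H$} to make the lower bound completely trivial. If $H$ has a universal vertex $w$, then Lemma~\ref{lemma:universal} gives $\sptw(H)=\dptw(H)=\spctw(H)=\pw(H-w)+1$, so you only need $\pw(H-w)\ge k$, and pathwidth lower bounds for trees are standard.

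This is precisely the paper's approach, and it is far simpler than what you outline. Take any tree $T$ with $\pw(T)=k$. Let $G_T$ be the graph obtained from two disjoint copies of $T$ by adding, for each $v\in V(T)$, an edge between the two copies of $v$. An easy induction on $|V(T)|$ shows $\spctw(G_T)\le 3$ (hence $\le k$): given a special tree decomposition of $G_{T-x}$ for a leaf $x$ with parent $y$, in which the two copies of each vertex always occur together, append a single bag $\{x_1,x_2,y_1,y_2\}$ below the deepest bag containing $y_1,y_2$. Now contract the entire second copy of $T$ to a single vertex $w$; the resulting minor $G'_T$ has $w$ universal and $G'_T-w\cong T$, so by Lemma~\ref{lemma:universal} all three parameters equal $\pw(T)+1=k+1$. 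No head-vertex analysis, no width-$k$ generalization of the obstruction graphs, and no ad hoc lower-bound argument is needed.

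Your scheme of generalizing $G_1,G_2,G_3$ and the chain-tree-of-cycles condition to width $k$ might be made to work, but it requires proving from scratch structural facts (non-head vertices at width $k$, collapse of a width-$k$ decomposition of $H$ to a path decomposition of a block) for which the paper has no tools beyond width two --- and which Section~\ref{section:width3} explicitly avoids by the universal-vertex trick.
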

\begin{proof}
Note that trees can have arbitrary large pathwidth~\cite{EllisST94}.
Let $T=(V,E)$ be a tree with pathwidth $k$. Let $G_T=(V',E')$ be a graph obtained by taking two copies of $T$ and adding
edges between copies of vertices, that is, $V'=V_1 \cup V_2$ with $V_i = \{v_i : v\in V\}$, for $i\in \{1,2\}$,
$E = \{\{v_i,w_i\} : \{v,w\}\in E, i\in \{1,2\}\} \cup \{\{v_1,v_2\} : v\in V\}$. See Figure~\ref{figure:width3} for an example. 

\begin{figure}[t]
\begin{center}
\includegraphics[scale=1.0]{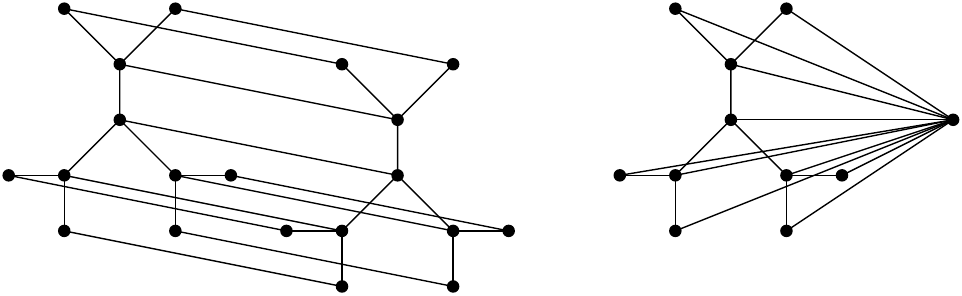}
\end{center}
\caption{$G_T$ and $G'_T$: an example of the construction in the proof of Proposition~\ref{proposition:closed3}.}
\label{figure:width3}
\end{figure}

The special treewidth of $G_T$ is at most three. With induction to the size of $T$, we show that $G_T$ has a special
tree decomposition of width at most three such that for each $v\in V$, all bags that contain $v_1$ also contain $v_2$ and
vice versa. This clearly holds when $T$ consists of a single vertex. 
Let $x\in V$ be a leaf of $T$, with the parent $y$. 
By induction, we assume we have a special tree decomposition of width at most three of 
$G_{T-x}$, such that for each $v\in V-\{x\}$, all bags that contain $v_1$ also contain $v_2$ and
vice versa. Let $i$ be a bag of maximal depth in the tree decomposition with $\{y_1,y_2\}\subseteq X_i$.
By assumption, no descendant of $i$ contains $y_1$ or $y_2$. Now add a new bag $j$ to the tree
decomposition with $i$ the parent of $j$ and $X_j = \{y_1, y_2, x_1, x_2\}$. This is a special tree decomposition
of $G_T$ of width three and  for each $v\in V$, all bags that contain $v_1$ also contain $v_2$ and
vice versa. 

Now, consider the graph $G'_T$ obtained from $G_T$ by contracting all vertices in $\{v_2 : v\in V\}$
to a single vertex $w$. 
Clearly, $w$ is adjacent to all vertices of $V(G'_T)-\{w\}$ in $G'_T$.
Hence, by Lemma~\ref{lemma:universal}, the special treewidth, spaghetti
treewidth, and directed spaghetti treewidth
of $G'_T$ equal one plus the pathwidth of $G$. So,
$G'_T$ is a minor of $G_T$ and has special treewidth, spaghetti treewidth, and
directed spaghetti treewidth exactly $k+1$.
\end{proof}

Now, we prove that the graphs of strongly chordal treewidth at most $k$ are not closed under taking minors.

	\begin{PROP}\label{prop:counterexsc}
	Let $k\ge 3$. 
	The class of graphs of strongly chordal treewidth at most $k$ is not closed under taking minors.
	\end{PROP}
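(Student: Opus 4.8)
The plan is to mimic the construction used in Proposition~\ref{proposition:closed3}, replacing the pathwidth lower-bound argument by a strongly-chordal lower-bound argument. First I would fix a tree $T$ with $\pw(T)\ge k$, and build a graph $G_T$ on two copies $V_1,V_2$ of $V(T)$, but now making the second copy a \emph{clique} rather than just a copy of $T$: let $G_T$ have all edges inside $V_2$, all edges $\{v_1,v_2\}$ for $v\in V$, and the tree edges inside $V_1$. The point of making $V_2$ a clique is that this is already a chordal (indeed strongly chordal, as it will turn out) graph whose clique number is $k+1$: one can interval-order $V_2$ and hang each $v_1$ off its pendant position, so $G_T$ has pathwidth, hence strongly chordal treewidth, at most $k$. (If making $V_2$ a clique is not directly strongly chordal for the tree $T$ I use, I would instead take $G_T$ to be any strongly chordal supergraph witnessing $\sctw(G_T)\le k$; the precise construction can be adjusted, the essential feature is just that $G_T$ is a minor-preimage of the graph below with strongly chordal treewidth at most $k$.)

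Next I would contract all of $V_2$ to a single vertex $w$, obtaining a graph $G'_T$ which is a minor of $G_T$; here $w$ is adjacent to every other vertex and $G'_T-w$ is exactly $T$. The key step is then a lower bound: I claim $\sctw(G'_T)\ge k+1$, equivalently every strongly chordal supergraph $H$ of $G'_T$ has $\omega(H)\ge k+2$. The idea is that in $H$ we may assume $w$ is still adjacent to all other vertices, so every maximal clique of $H$ contains $w$; thus $H-w$ is a strongly chordal graph (strong chordality is hereditary for induced subgraphs) containing $T$ as a subgraph, with $\omega(H-w)=\omega(H)-1$. Hence $\sctw(G'_T)=\sctw(T)+1$. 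So it remains to show $\sctw(T)\ge k$ for a suitably chosen tree $T$, i.e.\ that strongly chordal treewidth of trees is unbounded. This cannot follow merely from $\pw(T)\ge k$ (the inequalities go $\sctw\le\spctw\le\pw$, the wrong way, and indeed $\sctw(T)\le 1$ always!). So this naive route fails, and the construction must be genuinely different.

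The correct approach, I expect, is to replace the tree $T$ by a graph whose strongly chordal treewidth is large but which still has a strongly chordal supergraph of clique number $k+1$ \emph{after} being duplicated. Concretely: take the $n$-sun $S_n$ for large $n$, or more generally a graph $R$ with $\sctw(R)\ge k$; form $G_T$ by taking $R$ together with a clique $Q$ of size $k+1$ and an apex-like attachment making $G_T$ strongly chordal with $\omega=k+1$ (for instance, let $G_T$ be $R$ plus a universal vertex set chosen so that the result is strongly chordal of clique number $k+1$ — this is possible since $\sctw(R)\le$ something bounded in terms of $|R|$, but we must be careful). Then contract the added part to a single universal vertex $w$; the resulting minor $G'_T$ satisfies $\sctw(G'_T)=\sctw(R)+1\ge k+1$ by the "$w$ in every maximal clique'' argument above, while $G_T$ had strongly chordal treewidth $\le k$. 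The main obstacle is exactly this bookkeeping: choosing $R$ and its strongly chordal "completion padding'' so that (i) $G_T$ is genuinely strongly chordal with clique number exactly $k+1$, and (ii) contracting the padding to one apex vertex raises $\sctw$ to $k+1$ via the hereditary-under-induced-subgraph property of strong chordality and the fact that a universal vertex lies in every maximal clique. Once both are in place the proposition follows: $G'_T$ is a minor of $G_T$, $\sctw(G_T)\le k$, and $\sctw(G'_T)\ge k+1$, so the class is not minor-closed. I would expect the write-up to settle (i) by exhibiting an explicit strongly chordal layout (an ordering witnessing a strong elimination ordering) of $G_T$, and (ii) by the short argument that any strongly chordal $H\supseteq G'_T$ may be assumed to keep $w$ universal, whence $H-w$ is strongly chordal and contains $R$, forcing $\omega(H)\ge \sctw(R)+2\ge k+2$.
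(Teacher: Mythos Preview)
Your proposal is not a proof. You correctly observe that the construction of Proposition~\ref{proposition:closed3} fails for strongly chordal treewidth, since trees have $\sctw\le 1$; and your apex identity $\sctw(R+w)=\sctw(R)+1$ for a universal vertex $w$ is indeed correct (it is not covered by Lemma~\ref{lemma:universal}, but the hereditary argument you sketch does establish it). From that point on, however, you only describe a template --- find a graph $R$ with $\sctw(R)\ge k$, pad it to a strongly chordal $G_T$ with $\omega(G_T)=k+1$, and arrange that some connected dominating set $S$ contracts to an apex over $R$ --- without producing any such $G_T$. You yourself flag this as ``the main obstacle'', and it is the entire content of the proof. Note also that your framework is harder to instantiate than it looks: if $G_T$ is strongly chordal, then $G_T-S$ is an induced subgraph of $G_T$, hence itself strongly chordal, so $\sctw(G_T-S)=\omega(G_T-S)-1$; to get $\sctw(G'_T)\ge k+1$ via your apex identity you therefore need a $(k+1)$-clique in $G_T$ disjoint from $S$ yet dominated by the connected set $S$, and the obvious attempts (a single apex vertex, two $(k+1)$-cliques joined by a path, prism-like doublings) either destroy chordality or fail to raise the clique number under contraction. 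Nothing in your write-up resolves this.

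The paper's proof is entirely different and concrete. It defines an explicit graph $SC_{k+1}$ built from three copies of $K_{k+1}$ glued onto a $K_4$ along prescribed edges, together with a few additional edges. It shows $SC_{k+1}$ is strongly chordal by exhibiting it as a neighborhood subtree tolerance graph and invoking a theorem of Bibelnieks and Dearing; since $\omega(SC_{k+1})=k+1$, this gives $\sctw(SC_{k+1})=k$. Contracting the single edge $w_1w_4$ then creates a $6$-cycle $w_1v^1_{k+1}w_2v^2_{k+1}w_3v^3_{k+1}w_1$ with no odd chord, and adding any odd chord completes a clique of size $k+2$; hence $\sctw(SC_{k+1}/w_1w_4)\ge k+1$. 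There is no apex vertex, no dominating set, and no auxiliary graph $R$ of prescribed $\sctw$: the obstruction after contraction is a local $6$-cycle, not a universal-vertex reduction.
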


	For each $k\ge 4$, we define $SC_k$ as follows. 
	For each $1\le i\le 3$, let $K_k^i$ be the complete graph on the vertex set $\{v^i_1, v^i_2, \ldots, v^i_k\}$. 
	The graph $SC_k$ is obtained from the disjoint union of $K^1_k$, $K^2_k$, $K^3_k$ and the complete graph on the vertex set $\{w_1,w_2,w_3,w_4\}$ by 
	\begin{enumerate}[label={--}] 
	\item identifying $v^1_1v^1_2$ with $w_1w_2$ (with identifying each end vertex), and $v^2_1v^2_2$ with $w_2w_3$, and $v^3_1v^3_2$ with $w_3w_4$,
	\item adding edges $w_3v^1_i$ for all $3\le i\le n-1$,
	\item adding edges $w_1v^2_i$ for all $3\le i\le n-1$,
	\item adding edges $w_2v^3_i$ for all $3\le i\le n-1$.
	\end{enumerate}
	See Figure~\ref{fig:sc5}.
	Note that $\omega(SC_{k})=k$.

	\begin{figure}[t]
 \tikzstyle{v}=[circle, draw, solid, fill=black, inner sep=0pt, minimum width=3pt]
  \centering
  \begin{tikzpicture}[scale=0.7]
	
	\draw (0.7,-2.5) node {$w2$};
	\draw (0.7,-6) node {$w1$};
	\draw (4.3,-2.5) node {$w3$};
	\draw (4.3,-6) node {$w4$};
	  \node[v](w2) at (1,-3){};
     \node[v](w1) at (1,-5.5){};
	 \node[v](w3) at (5-1,-3){};
  \node[v](w4) at (5-1,-5.5){};
    
          \draw (w1)--(w2)--(w3)--(w4)--(w1);
          \draw (w1)--(w3);
          \draw (w2)--(w4);

    \node[v](a1) at (0-1,-4){};
    \node[v](a2) at (0.5,-3.8){};
    \node[v](a3) at (0.5,-4.5){};
   
    \node[v](b2) at (1+1,-2){};
    \node[v](b3) at (5-1-1,-2){};
    \node[v](b1) at (2.5,-0.5){};
    
     \node[v](c1) at (6-0,-4){};
    \node[v](c2) at (5,-3.8){};
    \node[v](c3) at (5,-4.5){};

 \foreach \i in {1,2,3} {
          \draw (w1)--(a\i);
          \draw (w2)--(a\i);
          }
      \draw (a1)--(a2)--(a3)--(a1);
      
 \foreach \i in {1,2,3} {
          \draw (w2)--(b\i);
          \draw (w3)--(b\i);
          }
      \draw (b1)--(b2)--(b3)--(b1);

 \foreach \i in {1,2,3} {
          \draw (w3)--(c\i);
          \draw (w4)--(c\i);
          }
      \draw (c1)--(c2)--(c3)--(c1);

\foreach \i in {2,3} {
         \draw (w3) -- (a\i);
         \draw (w1) -- (b\i);
         \draw (w2) -- (c\i);}

    \end{tikzpicture}
  \caption{The graph $SC_5$.}
  \label{fig:sc5}
\end{figure}
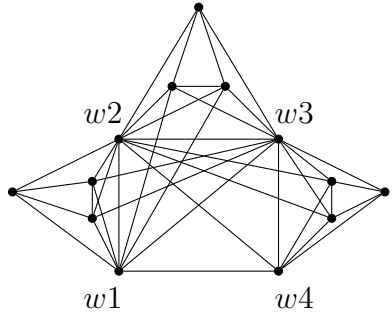

	We first show that $SC_{k+1}$ is strongly chordal.
	Let $T$ be a tree. 
	For a vertex $c\in V(T)$ and $r\ge 0$,
	we define $T(c,r)$ as the subtree of $T$ which induces on the vertices $v$ such that the distance between $v$ and $c$ is at most $r$.
	For a positive integer $k$, a graph $G=(V,E)$ is called a \emph{neighborhood subtree tolerance graph with tolerance $k$} 
	if there exists a tree $T$, and a set $S= \{ T(c_v, r_v): v\in V\}$ of subtrees of
$T$ such that 
	$xy\in E$ if and only if $\abs{V(T(c_x, r_x))\cap V(T(c_y, r_y))}\ge k$.

	Bibelnieks and Dearing showed the following.
	
	\begin{THM}[Bibelnieks and Dearing~\cite{EP1993}]\label{thm:neigh}
	Let $k$ be a positive integer.
	If $G$ is a neighborhood subtree tolerance graph with tolerance $k$, then $G$ is strongly chordal.
	\end{THM}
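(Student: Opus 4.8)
The plan is to reduce to the sun-free characterization of strongly chordal graphs already recorded as Theorem~\ref{thm:charsc}: it suffices to prove that a neighborhood subtree tolerance graph $G$ with tolerance $k$ is chordal and contains no induced subgraph isomorphic to a sun. Throughout I write $B_v := T(c_v, r_v)$ for the neighborhood subtree of $v$, so that $uv \in E$ exactly when $\abs{V(B_u) \cap V(B_v)} \ge k$. I first note that the class is closed under taking induced subgraphs, since restricting the family $\{B_v\}$ to a vertex subset yields a representation of the induced subgraph with the same tolerance $k$.

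The technical engine is a projection (\emph{gate}) lemma for neighborhood subtrees. Fix two vertices $u,v$, and let $c$ be the vertex of $B_v$ closest to $c_u$ in $T$ (the gate of $c_u$ into the subtree $B_v$). Because $T$ is a tree, for every $x \in V(B_v)$ the path from $c_u$ to $x$ passes through $c$, so $d(c_u, x) = d(c_u, c) + d(c, x)$. Hence $x \in B_u \cap B_v$ if and only if $x \in B_v$ and $d(c, x) \le r_u - d(c_u, c)$; that is, the trace $B_u \cap B_v$ is again a neighborhood subtree, namely a ball inside $B_v$ centered at $c$. I would prove this lemma first, since it restores a monotone, ball-like behaviour of the intersections even though the adjacency test counts shared vertices rather than merely testing for a nonempty meet.

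Next I would establish chordality. Suppose towards a contradiction that $G$ has an induced cycle $v_1 v_2 \cdots v_m v_1$ with $m \ge 4$. The consecutive balls meet in at least $k \ge 1$ vertices while the non-consecutive ones share fewer than $k$; in particular all the $B_{v_i}$ are subtrees of $T$ and consecutive ones intersect. Using the gate lemma to order the traces of $B_{v_{i-1}}$ and $B_{v_{i+1}}$ inside $B_{v_i}$, I would argue that the data of ``where consecutive neighbours attach'' induces a betweenness relation along $T$ that a cycle of length at least four cannot realise, contradicting the acyclicity of $T$; this yields that $G$ is chordal.

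The main work, and the step I expect to be the principal obstacle, is ruling out induced suns. Suppose $G$ contains an induced $n$-sun with central clique $U = \{u_1, \ldots, u_n\}$ and independent set $W = \{w_1, \ldots, w_n\}$, where $w_i$ is adjacent exactly to $u_i$ and $u_{i+1}$ (indices modulo $n$). Since the balls $B_{u_1}, \ldots, B_{u_n}$ pairwise intersect, the Helly property for subtrees of a tree gives a common subtree $K = \bigcap_i B_{u_i} \ne \emptyset$; using $K$ and the gate lemma I would define, for each $i$, the branch of $T$ along which $B_{u_i}$ departs from $K$, and hence a combinatorial position of $u_i$ relative to the acyclic host $T$. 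The adjacency pattern of the sun forces each $w_i$ to be $k$-close to the consecutive clique vertices $u_i, u_{i+1}$ but $k$-far from the rest; via the gate lemma this pins $B_{w_i}$ between the positions of $u_i$ and $u_{i+1}$, so that the consecutive-pair incidences of $W$ impose a cyclic order on $u_1, \ldots, u_n$ around a single branching structure of $T$. A tree admits no such cyclic order, which is the desired contradiction. The delicate point is precisely the tolerance: with the weaker ``nonempty intersection'' test the common-core and betweenness arguments are classical, but with the threshold $k$ one must control the sizes $\abs{V(B_u) \cap V(B_v)}$ carefully, and the gate lemma is exactly the tool that converts these size thresholds back into the nested, linearly ordered behaviour needed to finish. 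Having excluded all suns and shown chordality, Theorem~\ref{thm:charsc} gives that $G$ is strongly chordal.
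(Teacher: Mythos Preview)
The paper does not prove Theorem~\ref{thm:neigh}: it is stated with attribution to Bibelnieks and Dearing~\cite{EP1993} and invoked as a black box in Lemma~\ref{lem:counterex3}. There is therefore no argument in the paper to compare your proposal against.

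On the merits of your sketch: the overall strategy---reduce to Farber's sun-free characterization (Theorem~\ref{thm:charsc}), establish chordality, then exclude induced suns via a gate/projection lemma and a cyclic-order contradiction on the host tree---is the natural one, and your gate lemma (that $B_u\cap B_v$ is the set of vertices of $B_v$ within distance $r_u-d(c_u,c)$ of the gate $c$) is correct and is indeed the right device for turning the size-threshold adjacency back into nested, order-like behaviour. Both main steps, however, remain genuine sketches. For chordality, ``induces a betweenness relation along $T$ that a cycle of length at least four cannot realise'' names the intended contradiction without executing it; with tolerance $k>1$ consecutive balls may overlap in large subtrees and the usual gate-walk argument needs care. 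For suns, Helly does give a common vertex of the $B_{u_i}$, but converting the adjacency pattern of the $w_i$ into an honest cyclic order on branches of $T$ requires a careful analysis of where the centres $c_{u_i}$ lie relative to that common core---you have correctly flagged this as ``the delicate point'' but not resolved it. None of this is a wrong turn; it is simply where the actual work of a full proof would live.
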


	\begin{LEM}\label{lem:counterex3}
	Let $k\ge 3$. The graph $SC_{k+1}$ is a neighborhood subtree tolerance graph with tolerance $1$.
	Hence $SC_{k+1}$ is strongly chordal.
	\end{LEM}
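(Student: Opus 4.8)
The plan is to exhibit an explicit tree $T$ together with a subtree assignment $T(c_v,r_v)$ for every vertex $v$ of $SC_{k+1}$ such that two vertices are adjacent in $SC_{k+1}$ if and only if their assigned subtrees intersect (tolerance $1$). Once this representation is produced, Theorem~\ref{thm:neigh} immediately gives that $SC_{k+1}$ is strongly chordal, which is exactly the claim.

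First I would set up the host tree so that the ``spine'' reflects the central $K_4$ on $\{w_1,w_2,w_3,w_4\}$: take a path $P$ with internal vertices corresponding to the edges $w_1w_2$, $w_2w_3$, $w_3w_4$ (and the three extra chords $w_1w_3$, $w_2w_4$ are handled by letting the $w_i$-subtrees be long enough balls $T(c_{w_i},r_{w_i})$ that pairwise overlap). Concretely, each $w_i$ will be represented by a ball of suitable radius centered at an appropriate node so that all six pairs among $\{w_1,w_2,w_3,w_4\}$ meet; this is the standard neighborhood-subtree representation of a clique. Then for each of the three cliques $K^j_{k+1}$ (say $j=1$ with vertices $v^1_1,\dots,v^1_{k+1}$, where $v^1_1v^1_2=w_1w_2$), I attach a branch of $T$ hanging off the part of the spine representing the edge $w_1w_2$: the vertices $v^1_3,\dots,v^1_{k}$ get small balls deep in that branch so that they pairwise intersect and all intersect the balls of $v^1_1=w_1$ and $v^1_2=w_2$, giving the clique $K^1_{k+1}$, and crucially I choose the branch to reach far enough toward the node representing $w_3$ so that the ball of $w_3$ — which must be large anyway to meet $w_1$ — also overlaps the balls of $v^1_3,\dots,v^1_{k}$, realizing the edges $w_3v^1_i$. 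The construction is symmetric under the cyclic relabeling $(1,2,3)\mapsto(2,3,1)$ and $(w_1,w_2,w_3,w_4)$, $(w_3,w_1,w_2)$, so the same recipe handles the $w_1v^2_i$ and $w_2v^3_i$ edges.

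The verification then splits into two routine directions: (i) every edge of $SC_{k+1}$ corresponds to a genuine intersection — this is checked case by case over the edge types (within a $K^j$, within the central $K_4$, and the three families of ``extra'' edges), and by design each was forced by the placement above; (ii) every non-edge corresponds to disjoint subtrees — here one checks that $v^1_i$ and $v^2_\ell$ (for $i,\ell\ge 3$) live in different branches whose balls do not reach each other, that $w_4$ is too far from the $v^1_i$ and $v^2_i$ branches, and so on. Since tolerance is $1$, ``intersect'' just means share a node, which keeps these checks elementary.

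The main obstacle I expect is bookkeeping: choosing the radii and center positions so that the large $w_i$-balls (needed for the central clique and for the deliberate ``extra'' edges) do not accidentally create unwanted adjacencies — e.g. making sure $w_4$'s ball, which must meet $w_1,w_2,w_3$, stays clear of the $K^1$ and $K^2$ branches, and that $w_3$'s ball reaches the $K^1$ branch but not past it into anything it should miss. Getting a single consistent set of parameters that simultaneously satisfies all the ``must meet'' and ``must avoid'' constraints is the delicate part; I would handle it by first fixing the spine lengths generously, then placing each clique-branch at a distinct spine node and each $v^j_i$ ball as a leaf-neighborhood of radius $0$ or $1$, and finally enlarging only the $w_i$-balls by the minimum amount needed, checking the finitely many interaction pairs. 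Everything else is symmetry and a short case analysis, and the conclusion ``hence $SC_{k+1}$ is strongly chordal'' follows directly from Theorem~\ref{thm:neigh}.
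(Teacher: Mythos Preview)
Your plan is essentially the paper's approach: exhibit an explicit neighborhood-subtree model with tolerance $1$ and then invoke Theorem~\ref{thm:neigh}. The paper, however, does not build a spine-with-branches tree; it takes a \emph{spider}---three paths $A=a_1\cdots a_6$, $B=b_1\cdots b_5$, $C=c_1\cdots c_7$ glued at a common vertex---and writes down concrete centers and radii (e.g.\ $T_{w_1}=T(a_2,4)$, $T_{w_2}=T(a_1,6)$, $T_{w_3}=T(c_1,6)$, $T_{w_4}=T(c_4,3)$; each $v^j_i$ for $3\le i\le k$ a radius-$2$ ball near the tip of the appropriate leg; each $v^j_{k+1}$ a radius-$0$ ball at the very tip). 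Your topology can be made to work too, but the paper's choice makes the verification a one-line ``easily checked,'' whereas you still owe the actual parameters.

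Two cautions about your sketch. First, your appeal to a cyclic symmetry is off: $SC_{k+1}$ is \emph{not} invariant under any rotation of the three attached cliques, because the cliques sit on consecutive edges $w_1w_2$, $w_2w_3$, $w_3w_4$ of a linearly ordered $K_4$ and the ``extra'' adjacencies go to $w_3$, $w_1$, $w_2$ respectively. This asymmetry is exactly why the paper's three legs have different lengths $6,5,7$; you will need to tune the three branches separately rather than by symmetry. Second, you only place $v^1_3,\dots,v^1_k$ and never say what happens to $v^1_{k+1}$: in $SC_{k+1}$ that vertex is \emph{not} adjacent to $w_3$ (the extra edges run over $3\le i\le k$ only), so it must be pushed beyond the reach of $w_3$'s ball while still meeting all of $v^1_3,\dots,v^1_k$ and $w_1,w_2$. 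The paper handles this by putting $v^j_{k+1}$ as a radius-$0$ ball at the leaf of its leg; make sure your parameters accommodate the same distinction.
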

	\begin{proof}
	Let $A=a_1a_2 \cdots a_6$, $B=b_1b_2 \cdots b_5$ and $C=c_1c_2 \cdots c_7$ be paths.
	Let $T$ be the tree obtained from the disjoint union of $A, B, C$ and a new vertex $v$
	by adding edges $va_1$, $vb_1$ and $vc_1$.
We define 
	\begin{enumerate}
	\item $T_{v^1_{k+1}}=T(a_6,0)$, $T_{v^2_{k+1}}=T(b_5,0)$ and $T_{v^3_{k+1}}=T(c_7,0)$,
	\item for each $3\le j\le k$, $T_{v^1_j}=T(a_5,2)$, $T_{v^2_j}=T(b_4,2)$ and $v^3_j=T(c_6,2)$,
	\item $T_{w_1}=T(a_2,4)$, $T_{w_2}=T(a_1,6)$, $T_{w_3}=T(c_1,6)$ and $T_{w_4}=T(c_4,3)$.
	\end{enumerate}
	We can easily check that the set of subtrees $\{T_v\}_{v\in V(SC_{k+1})}$ on the tree $T$
	indeed forms a neighborhood subtree models with tolerance $1$.
	Therefore, by Theorem~\ref{thm:neigh}, $SC_{k+1}$ is strongly chordal.	
	\end{proof}

	\begin{LEM}\label{lem:counterex2}
	Let $k\ge 3$. The graph $SC_{k+1}/w_1w_4$ has strongly chordal treewidth at least $k+1$.
	\end{LEM}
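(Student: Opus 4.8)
The plan is to establish the equivalent statement that every strongly chordal graph $H$ having $G:=SC_{k+1}/w_1w_4$ as a subgraph satisfies $\omega(H)\ge k+2$; by the definition of strongly chordal treewidth this is exactly $\sctw(G)\ge k+1$. Throughout, write $w$ for the vertex of $G$ obtained by contracting $w_1w_4$, so that $w$ is adjacent in $G$ to everything that was adjacent to $w_1$ or to $w_4$ in $SC_{k+1}$, and keep the names $w_2,w_3,v^i_j$ for the remaining vertices; let $K^i$ be the $(k+1)$-clique of $G$ on the vertices of $K^i_{k+1}$ (with $w_1$ renamed $w$ when it occurs). The first and main step is to observe that, although $SC_{k+1}$ is itself strongly chordal (Lemma~\ref{lem:counterex3}), identifying $w_1$ with $w_4$ closes the ``path of triangles'' along $w_1w_2w_3w_4$ into a cyclic gadget: the six vertices $w,w_2,w_3,v^1_{k+1},v^2_{k+1},v^3_{k+1}$ induce in $G$ a copy of the $3$-sun $S_3$, with central triangle $\{w,w_2,w_3\}$, because $v^1_{k+1}$ is adjacent only to $w,w_2$ among these three, $v^2_{k+1}$ only to $w_2,w_3$, and $v^3_{k+1}$ only to $w_3,w$ (the added edges $w_3v^1_i$, $w_1v^2_i$, $w_2v^3_i$ reach only indices $i\le k$, never $i=k+1$). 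Since $k\ge 3$ these six vertices are distinct, so in particular
\[
C:\quad w-v^1_{k+1}-w_2-v^2_{k+1}-w_3-v^3_{k+1}-w
\]
is a $6$-cycle all of whose edges lie in $G$, hence in $H$.

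Next I would use that $H$ is strongly chordal: the cycle $C$ must have an odd chord, i.e.\ a chord splitting $C$ into two paths of length three, and the only such chords are the three ``antipodal'' pairs $wv^2_{k+1}$, $v^1_{k+1}w_3$ and $w_2v^3_{k+1}$. A direct check against the definition of $SC_{k+1}$ shows that none of these pairs is an edge of $G$, so $H$ must contain at least one of them as an edge. The three cases are cyclically symmetric, so it suffices to finish one: if $v^1_{k+1}w_3\in E(H)$, then in $G$ the vertex $w_3$ is already adjacent to $w$ and to $w_2$ (the triangle) and to $v^1_3,\ldots,v^1_k$ (the added edges), hence to every vertex of $K^1$ except $v^1_{k+1}$; adding the edge $v^1_{k+1}w_3$ therefore makes $\{w_3\}\cup V(K^1)$ a clique of size $k+2$ in $H$. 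Symmetrically, $wv^2_{k+1}\in E(H)$ makes $\{w\}\cup V(K^2)$ a $(k+2)$-clique (using that $w=w_1$ was joined to $v^2_3,\ldots,v^2_k$), and $w_2v^3_{k+1}\in E(H)$ makes $\{w_2\}\cup V(K^3)$ a $(k+2)$-clique (using the edges $w_2v^3_i$). In every case $\omega(H)\ge k+2$, which is what we wanted.

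The only real content lies in the first step: recognising that contracting $w_1w_4$ turns the linear, sun-free gadget $SC_{k+1}$ into one containing a $3$-sun whose three central vertices each sit inside a distinct $(k+1)$-clique, so that every edge which strong chordality is permitted to add in order to destroy that sun is forced to enlarge one of those cliques to size $k+2$. After that, the argument reduces to reading off the neighbourhoods of $w,w_2,w_3$ in $G$ from the definition of $SC_{k+1}$ and to the elementary fact that an odd chord of a $6$-cycle joins antipodal vertices; I would spell these adjacency checks out explicitly but expect no difficulty with them.
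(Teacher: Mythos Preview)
Your proof is correct and follows essentially the same approach as the paper: identify the $6$-cycle $w\,v^1_{k+1}\,w_2\,v^2_{k+1}\,w_3\,v^3_{k+1}\,w$, observe that strong chordality forces one of the three antipodal chords, and check that each such chord completes one of the $(k+1)$-cliques to a $(k+2)$-clique. Your write-up is in fact slightly more detailed than the paper's (you make explicit that the six vertices induce an $S_3$ and spell out the three resulting cliques), but the argument is the same.
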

	\begin{proof}
	We say $w_1$ for the contracted vertex of $SC_{k+1}/w_1w_4$. 
	Then clearly, 
	$w_1v^1_{k+1}w_2v^2_{k+1}w_3v^3_{k+1}w_1$ 
	is a cycle of length six in $SC_{k+1}/w_1w_4$ and 
	it does not have an odd chord.
	So we should add an odd chord,
	to make it as a subgraph of a strongly chordal graph.  
	We can verify that as soon as we add $v^1_{k+1}w_3$, 
	$\{w_1,w_2,w_3, v^1_3,v^1_{4}, \ldots, v^1_{k+1}\}$ becomes a clique of size $k+2$ in $SC_{k+1}/w_1w_4$.
	The same result appears when adding $v^2_{k+1}w_1$ or $v^3_{k+1}w_2$.
	Therefore, $SC_{k+1}/w_1w_4$ has a strongly chordal treewidth at least $k+1$.	
	\end{proof}

	\begin{proof}[Proof of Proposition~\ref{prop:counterexsc}]	
	Since $\omega(SC_{k+1})=k+1$, by Lemma~\ref{lem:counterex3}, $SC_{k+1}$ has strongly chordal treewidth $k$.
	By Lemma~\ref{lem:counterex2}, $SC_{k+1}/w_1w_4$ has strongly chordal treewidth at least $k+1$.
	Therefore, the class of graphs of strongly chordal treewidth at most $k$ is not closed under taking minors.
	\end{proof}

\section{Conclusions}
\label{section:conclusions}

In this paper, we consider the  graphs of special
treewidth, spaghetti treewidth, directed spaghetti treewidth, or
strongly chordal treewidth two. Similar to treewidth, pathwidth and treedepth,
these graph parameters can be defined as the minimum of the maximum clique size
over all supergraphs in some graph class $\cal G$, with $\cal G$ the class
of chordal graphs (in case of treewidth) or a subclass of the chordal graphs.
(See the discussion in Section~\ref{section:introduction} and Table~\ref{table:graphclasses}.)

Our main results are twofold: for each of the four parameters, we give the
obstruction set of the graphs with this parameter at most two. These obstruction
sets are summarized in Table~\ref{table:overview} in Section~\ref{section:introduction}.
Secondly, we give characterizations in terms of (special types of) trees of
cycles of the cell completion. A 2-connected graph has treewidth two, if and
only if its cell completion is a tree of cycles (see
Section~\ref{section:preliminaries}); for each of the other parameters, a similar
result with additional conditions on the tree of cycles exists. We summarize
these in Table~\ref{table:treemodels}. We have that the treewidth, spaghetti
treewidth, and strongly chordal treewidth of a graph equals the maximum of this
parameter over the blocks of the graph. This is not the case for pathwidth and
for special treewidth. For special treewidth two, we have established a precise
condition (building upon the notion of head vertices) how blocks of special
treewidth at most two can be connected to obtain a graph of special treewidth two
(see Section~\ref{section:special}).

\begin{table}[htb]
\begin{tabular}{|l|l|l|}
\hline
parameter & cycle tree model & connecting blocks \\
\hline
treewidth & tree of cycles \cite{BodlaenderK93,Kloks93} & everywhere \\
pathwidth & path of cycles \cite{Fluiter97,BodlaenderF96b} & !(not simple) \cite{Fluiter97} \\
spaghetti tw & chain tree of cycles (Th.~\ref{thm:main}) & everywhere \\
strongly chordal tw & tree of $2$-boundaried cycles & everywhere \\
& (Th.~\ref{thm:main2}) & \\
dir.~spaghetti & path of cycles (Cor.~\ref{cor:main6}) & everywhere\\
special tw & path of cycles (Cor.~\ref{cor:main6}) & head vertices \\
\hline
\end{tabular}
\caption{Models for cell completions of graphs with value of parameter at most two.
The second column gives the characterization for 2-connected graphs; the last column
shows how blocks of width at most 2 can be connected: everywhere = each block
has width at most 2 is sufficient; ! = no simple characterization exist; head vertices = see Section~\ref{section:charsp2}. }
\label{table:treemodels}
\end{table}

We expect that similar characterizations are hard or impossible to 
obtain for values
larger than two. For instance, we see in Section~\ref{section:width3} that
the classes of graphs of special treewidth, spaghetti treewidth, directed spaghetti
treewidth, or strongly chordal treewidth at most $k$, for $k\geq 3$ are not
closed under taking minors.

It may be interesting to pursue a similar investigation for parameters that
are defined in a similar way by other subclasses of chordal graphs. 
Bodlaender, Kratsch and Kreuzen~\cite{BodlaenderKK13} showed that special treewidth and
spaghetti treewidth are fixed parameter tractable; an adaptation of the
algorithms by Bodlaender and Kloks \cite{BodlaenderK96} or Lagergren and Arnborg
\cite{LagergrenA91} gives linear time decision algorithms for each fixed bound
on the width. We conjecture that in a similar way, it can be shown that
directed spaghetti treewidth is fixed parameter tractable. Whether strongly
chordal treewidth is fixed parameter tractable, we leave as an open problem.

\end{document}